\documentclass[journal,twoside,web]{ieeecolor}
\usepackage{generic}
\usepackage[utf8]{inputenc} 
\usepackage[T1]{fontenc} 
\usepackage{hyperref}       
\usepackage{url}            
\usepackage{booktabs}       
\usepackage[export]{adjustbox}
\usepackage{amsfonts,amsmath,amsthm}       
\usepackage{nicefrac}       
\usepackage{microtype}      
\usepackage{xcolor}         
\usepackage{graphicx}
\usepackage{subcaption}
\usepackage[shortlabels]{enumitem}
\usepackage{cite}
\usepackage[noend]{algorithmic}
\usepackage{cleveref}[2012/02/15]
\usepackage{epstopdf}
\usepackage[ruled]{algorithm}
\usepackage{caption}
\captionsetup[figure]{skip=2pt}

\theoremstyle{plain}
\newtheorem{theorem}{Theorem}

\newtheorem{lemma}{Lemma}

\theoremstyle{definition}
\newtheorem{definition}{Definition}
\newtheorem{assumption}{Assumption}
\theoremstyle{remark}
\newtheorem{remark}{Remark}

\newcommand*{\mb}[1]{\mathbf{#1}}%
\newcommand*{\mc}[1]{\mathcal{#1}}%
\newcommand*{\mbb}[1]{\mathbb{#1}}%
\newcommand*{\bx}{\mathbf{x}}%
\newcommand*{\bz}{\mathbf{z}}%
\newcommand*{\f}{\mathbf{f}}%
\newcommand*{\h}{\mathbf{h}}%
\newcommand*{\N}{\mathbb{N}}%
\newcommand*{\R}{\mathbb{R}}%
\newcommand*{\prox}{\operatorname{prox}}%

\hypersetup{hidelinks=true}
\usepackage{textcomp}
\def\BibTeX{{\rm B\kern-.05em{\sc i\kern-.025em b}\kern-.08em
    T\kern-.1667em\lower.7ex\hbox{E}\kern-.125emX}}
\markboth{\hskip25pc IEEE TRANSACTIONS AND JOURNALS TEMPLATE}
{Author \MakeLowercase{\textit{et al.}}: Title}

\begin{document}
\title{Historical Information Accelerates Decentralized Optimization: A Proximal Bundle Method}
\author{Zhao Zhu, Yu-Ping Tian, and Xuyang Wu
\thanks{Z. Zhu and X. Wu are with the School of Automation and Intelligent Manufacturing, Southern University of Science and Technology, Shenzhen, China, and the State Key Laboratory of Autonomous
Intelligent Unmanned Systems, Beijing 100081, China. Email: {\tt\small 12532828@mail.sustech.edu.cn; wuxy6@sustech.edu.cn}}%
\thanks{Y.-P. Tian is with the School of Information and Electrical Engineering, Hangzhou City University, Hangzhou, China. Email: {\tt\small tianyp@hzcu.edu.cn}}
\thanks{This work is supported in part by the Guangdong Provincial Key Laboratory of Fully Actuated System Control Theory and Technology under grant No. 2024B1212010002, in part by the Shenzhen Science and Technology Program under grant No. JCYJ20241202125309014, and in part by the State Key Laboratory of Autonomous Intelligent Unmanned Systems under grant No. ZZKF2025-1-3.}}

\maketitle

\begin{abstract}

Historical information, such as past function values or gradients, has significant potential to enhance decentralized optimization methods for two key reasons: first, it provides richer information about the objective function, which also explains its established success in centralized optimization; second, unlike the second-order derivative or its alternatives, historical information has already been computed or communicated and requires no additional cost to acquire. Despite this potential, it remains underexploited. In this work, we employ a proximal bundle framework to incorporate the function values and gradients at historical iterates and adapt the framework to the proximal decentralized gradient descent method, resulting in a Decentralized Proximal Bundle Method (DPBM). To broaden its applicability, we further extend DPBM to the asynchronous and stochastic setting. We theoretically analysed the convergence of the proposed methods. Notably, both the asynchronous DPBM and its stochastic variant can converge with fixed step-sizes that are independent of delays, which is superior to the delay-dependent step-sizes required by most existing asynchronous optimization methods, as it is easier to determine and often leads to faster convergence. Numerical experiments on classification problems demonstrate that by using historical information, our methods yield faster convergence and stronger robustness in the step-sizes.
\end{abstract}

\begin{IEEEkeywords}
Decentralized optimization, historical information, asynchronous method, delay-independent step-size.
\end{IEEEkeywords}
\section{Introduction}\label{sec:intro}
In many engineering scenarios, a network of nodes collaborates to find a common decision that minimizes the sum of all nodes' local costs. This problem, known as decentralized optimization, has found applications in diverse areas such as sensor networks, multi-robot coordination, power systems \cite{yang2016distributed}, and machine learning \cite{ lian2017can}.

To solve decentralized optimization problems efficiently, a large number of algorithms have been proposed \cite{lu2011gossip,nedic2009distributed,yuan2016convergence,zeng2018nonconvex,xu2021dp,nedic17,qu2017harnessing,qu2019accelerated,shi2015extra,koloskova2020unified,Wu25,zhou2025asynchronous,Zhu2024,zhang2019fully,tian2020achieving,ubl2021totally}. Early efforts primarily focus on consensus-based primal methods, where a notable example is the decentralized gradient descent (DGD) method \cite{nedic2009distributed,yuan2016convergence,zeng2018nonconvex}. Due to its simplicity and effectiveness, DGD has been extended to various settings, including stochastic optimization \cite{lian2017can}, privacy-aware learning \cite{xu2021dp}, and asynchronous optimization \cite{Wu25,zhou2025asynchronous}. However, it can only converge to an inexact optimum when using fixed step-sizes. To overcome this issue, a series of exactly convergent methods are proposed, such as EXTRA \cite{shi2015extra}, DIGing\cite{nedic17,qu2017harnessing}, and several primal-dual methods. Despite the rich literature on decentralized optimization, most existing methods rely solely on information from the current iteration, overlooking the potential of historical information (function values, gradients) to enhance algorithmic performance. While exceptions exist—such as gradient-tracking \cite{nedic17,qu2017harnessing} and momentum-based methods \cite{qu2019accelerated}—they utilize only information from the two most recent iterations.

Leveraging historical information is particularly valuable in decentralized optimization for two reasons. First, historical data offers richer information of the objective function and, in centralized optimization, methods like Anderson acceleration \cite{mai2020anderson} and the proximal bundle method \cite{hiriart1993convex} have demonstrated that strategically leveraging historical information can substantially accelerate convergence and enhance robustness. Second, in many decentralized scenarios, either the communication or the computation is expensive, e.g., computation in decentralized model training and communication in UAV networks. However, information from historical iterations is readily available and requires no additional communication or computation to acquire. To summarize, historical information does not require additional computation or communication to acquire, but may significantly improve the performance. 

Motivated by this, we exploit historical information to accelerate decentralized optimization methods. In particular, we first adopt the proximal bundle framework \cite{hiriart1993convex} to incorporate the objective values and gradients at historical iterates, which gives a bundle surrogate of the function. Then, we integrate the bundle surrogate function with the Proximal DGD (Prox-DGD) method \cite{zeng2018nonconvex}, yielding a Decentralized Proximal Bundle Method (DPBM). Specifically, the update of Prox-DGD involves a surrogate function of the objective function, and we simply replace it with the bundle surrogate function. We also extend DPBM to the asynchronous and stochastic settings to broaden its applicability. We theoretically analysed the convergence of the proposed methods, and it is worth highlighting that both the asynchronous DPBM and its stochastic variant can converge with \emph{fixed step-sizes that do not rely on any delay information}. Numerical experiments on classification problems demonstrate that by using historical information, our methods yield not only faster convergence, but also stronger robustness in step-sizes.

The contributions of this paper are threefold:
\begin{enumerate}[1)]
    \item We investigate the use of historical information to accelerate decentralized optimization and incorporate it into the Prox-DGD method to enhance the performance. While some existing methods also leverage such information, they are typically limited to the two most recent iterations \cite{wang2017decentralized,nedic17,qu2019accelerated}. In contrast, our methods can leverage information from an arbitrary number of past iterations.
    \item We theoretically prove the convergence of the proposed methods under mild conditions.
    \item Our convergence results in the asynchronous setting are superior to most existing works. First, both the asynchronous DPBM and its stochastic variant are guaranteed to converge with \emph{fixed step-sizes that do not rely on any delay information} under proper conditions. However, in most existing asynchronous methods \cite{Zhu2024,zhou2025asynchronous,zhang2019fully,tian2020achieving}, the step-sizes rely on an unknown and usually large upper bound of delays, which results in difficult step-size determination and conservative step-sizes, and slows down the algorithm. Second, the asynchronous DPBM and its stochastic variant can address objective functions that are non-quadratic, non-smooth, and non-Lipschitz continuous, while most existing asynchronous methods only allow for smooth or Lipschitz continuous objective functions \cite{zhou2025asynchronous,Zhu2024,zhang2019fully,tian2020achieving,ubl2021totally}.
\end{enumerate}

\subsection*{Paper Organization and Notation}

The remainder of the paper is organized as follows. Section \ref{sec:problem_and_DSD} formulates the problem, and Section \ref{sec:alg} develops DPBM and its asynchronous and stochastic variant. Section \ref{sec:conv_ana} presents the convergence results, Section \ref{sec:numerical_exp} numerically evaluates the performance of the proposed methods, and Section \ref{sec:conclusion} concludes the paper.

For ease of representation, we denote $\N_0$ as the set of natural numbers including $0$, and for each integer $m>0$,
\[[m] \overset{\triangle}{=}  \{1,\ldots, m\}.\]
We use $\mb{0}_d$, $\mb{0}_{d\times d}$, and $I_d$ to denote the $d$-dimensional all-zero vector, the $d\times d$ all-zero matrix, and the $d\times d$ identity matrix, respectively, and ignore their subscripts when it is clear from the context. For any two matrices $A,B$, we use $A\otimes B$ to represent their Kronecker product. For any set $\mc{D}$, we use $|\mc{D}|$ to denote the number of elements in $\mc{D}$. For any function $f:\mathbb{R}^d\rightarrow\mathbb{R}$, we say that it is $L$-smooth for some $L>0$ if it is differentiable and
	\begin{equation*}
	\|\nabla f(y)-\nabla f(x)\|\le L\|y-x\|,~\forall x,y\in\mathbb{R}^d,
	\end{equation*}
	it is $\mu$-strongly convex for some $\mu>0$ if
    \begin{equation*}
	f(y)-f(x)-\langle g_x, y-x\rangle\!\ge \frac{\mu}{2}\|y-x\|^2,~\forall x,y\in\R^d, g_x\!\in\!\partial f(x),
	\end{equation*}
    and it is coercive if $\lim_{\|x\|\rightarrow +\infty} f(x) = +\infty$. For any $f:\R^d\rightarrow \R\cup\{+\infty\}$ and $x\in\R^d$, we define
    \[\prox_{f}(x) \overset{\triangle}{=}  \underset{z\in\R^d}{\operatorname{\arg\;\min}}~f(z)+\frac{1}{2}\|z-x\|^2.\]

\section{Problem Formulation}\label{sec:problem_and_DSD}
This section discusses the potential of utilizing historical information to enhance decentralized optimization methods. To this end, we first show how historical information can be used to improve optimization methods by reviewing the centralized proximal bundle method \cite{hiriart1993convex}. Then, we introduce decentralized optimization and discuss the potential of historical information in accelerating decentralized methods.

\subsection{Proximal Bundle Method}\label{ssec:PBM}

Exploiting historical information to accelerate convergence is typical in optimization methods, with notable examples including Anderson acceleration \cite{Anderson1965,mai2020anderson}, the cutting-plane method \cite{kelley1960cutting}, and the proximal bundle method \cite{hiriart1993convex,cederberg2025}. These methods have demonstrated competitive performance compared to their counterparts that use information only from the current iteration. In comparison to the Anderson acceleration and the cutting-plane method, the proximal bundle method is particularly robust in its parameters.

When minimizing a differentiable function $f:\R^d\rightarrow\R$, the proximal bundle method takes the form of
\begin{equation}\label{eq:bundle_update}
    x^{k+1} = 
    \underset{x\in\R^d}{\operatorname{\arg\;\min}}~f^k(x).
\end{equation}
In \eqref{eq:bundle_update},
\begin{equation}\label{eq:general_PB}
    f^k(x) = m^k(x)+\frac{\|x-x^k\|^2}{2\gamma^k}
\end{equation}
where $m^k$ is a minorant of $f$ and $\gamma^k>0$ is the step-size. A typical example of $m^k$ is the cutting-plane model (see Fig. \ref{fig:cp-PBM})
\begin{equation}\label{eq:minorant_pb}
    m^k(x) = \max\{f(x^t)+\langle \nabla f(x^t), x - x^t\rangle, t\in\mc{I}^k\},
\end{equation}
where $\mc{I}^k\subseteq [k]$ is a subset of historical indexes including $k$. Letting $m^k=\{k\}$ in 
\eqref{eq:minorant_pb} yields the steepest descent method and, if $f$ is convex, then compared to the setting $\mc{I}^k=\{k\}$, model \eqref{eq:minorant_pb} with a larger size of $\mc{I}^k$ approximates $f$ better: since $f(x)\ge m^k(x)\ge f(x^k)+\langle\nabla f(x^k), x-x^k\rangle$, we have
\begin{equation}\label{eq:fit_better}
    f(x)-m^k(x)\le f(x)-(f(x^k)+\langle\nabla f(x^k), x-x^k\rangle),
\end{equation}
which is also clear from Fig. \ref{fig:cp-PBM}. The higher approximation accuracy not only leads to faster convergence, but also yields stronger robustness in the step-size $\gamma^k$.

\begin{figure}
    \centering
    \includegraphics[width=0.5\linewidth]{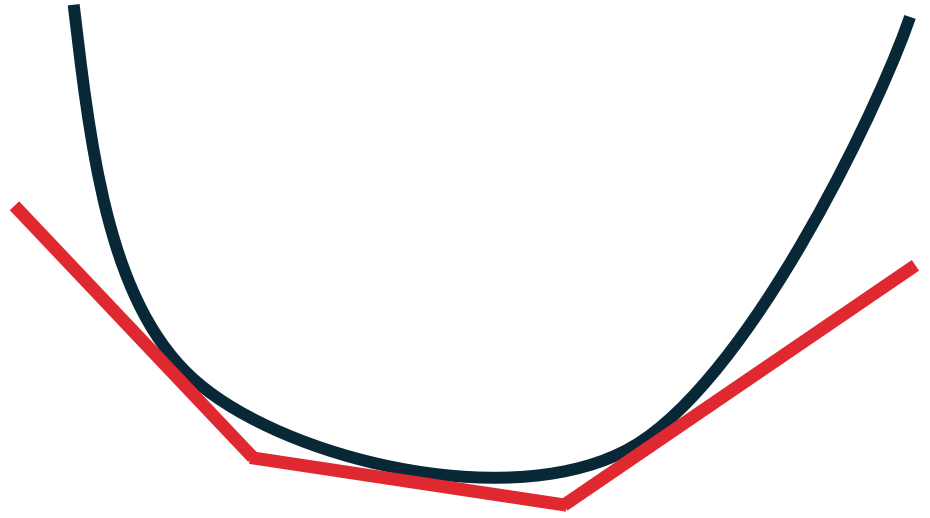}
    \caption{The cutting-plane model (the black curve represents $f$ and the red curve is the model).}
    \label{fig:cp-PBM}
\end{figure}

\subsection{Decentralized Optimization}

Consider a network $\mc{G}=(\mc{V}, \mc{E})$ of $n$ nodes, where $\mc{V}=[n]$ is the vertex set and $\mc{E}\subseteq \mc{V}\times \mc{V}$ is the edge set. Each node $i$ possesses two local objective functions, including a differentiable $f_i:\R^d\rightarrow\R$ and a possibly non-differentiable $h_i:\R^d\rightarrow\R\cup\{+\infty\}$, and all the nodes collaborate to solve
\begin{equation}\label{eq:prob}
\begin{split}
        \underset{x_i\in\R^d,i\in \mc{V}}{\operatorname{minimize}} ~~&~ \sum_{i\in \mc{V}} \phi_i(x_i)\overset{\triangle}{=} (f_i(x_i)+h_i(x_i))\\ \operatorname{subject~to}~&~x_1 = \ldots = x_n,
\end{split}
\end{equation}
which has wide applications, e.g., decentralized learning \cite{lian2017can} and cooperative control \cite{Liu2023}. Typical settings of $h_i$ include the $\ell_1$-norm and the indicator function of a convex set.

We consider problem \eqref{eq:prob} under the following standard assumptions \cite{wu2017decentralized,Wu25}.
\begin{assumption}\label{asm:convex}
    For each $i\in\mc{V}$, the functions $f_i$ and $h_i$ are proper, closed, and convex, and $f_i$ is differentiable.
\end{assumption}
\begin{assumption}\label{asm:graph}
    The network $\mc{G}$ is undirected and connected.
\end{assumption}

We argue that exploiting historical information is particularly valuable in enhancing decentralized optimization methods due to two reasons. First, as discussed in Section \ref{ssec:PBM}, proper use of historical information (e.g., the proximal bundle method \cite{hiriart1993convex}) can substantially accelerate convergence and enhance robustness of optimization algorithms. Second, in many decentralized scenarios, either the computation of certain information (e.g., gradient in model training) or the communication (e.g., in UAV networks) is expensive, while information from historical iterations is readily available and requires no additional computation or communication to acquire. Despite these, most existing decentralized optimization methods use only information from the current \cite{shi2015extra,yuan2016convergence,assran2020asynchronous} or the two most recent iterations \cite{wang2017decentralized,nedic17,qu2019accelerated}.

\section{Algorithm Development}\label{sec:alg}

This section first designs a synchronous decentralized proximal bundle method and then extends it to the asynchronous and stochastic settings.

\subsection{Decentralized Proximal Bundle Method}

We adapt the proximal bundle method to the proximal decentralized gradient descent method (Prox-DGD), which gives the Decentralized Proximal Bundle Method (DPBM).

\subsubsection{Prox-DGD} It is a classical algorithm for solving problem \eqref{eq:prob}. To introduce Prox-DGD, we define $\bx=(x_1^T,\ldots,x_n^T)^T$, $f(\bx)=\sum_{i=1}^n f_i(x_i)$, $h(\bx)=\sum_{i=1}^n h_i(x_i)$, and $\mb{W}=W\otimes I_d$ where $W\in\R^{n\times n}$ is an averaging matrix associated with $\mc{G}$.
\begin{definition}
    We say $W\in\mathbb{R}^{n\times n}$ is an averaging matrix associated with $\mc{G}$ if it is symmetric, doubly stochastic, and $w_{ij}>0$ for $j\in \mc{N}_i\cup\{i\}$ and $w_{ij}=0$ otherwise, where $\mc{N}_i=\{j|~\{i,j\}\in\mc{E}\}$ is the neighbor set of node $i$.
\end{definition}

Prox-DGD for solving problem \eqref{eq:prob} updates as
\begin{equation}\label{eq:DPSD}
    \bx^{k+1} = \prox_{\alpha \h}(\mb{W}\bx^k - \alpha \nabla \mb{f}(\bx^k)),
\end{equation}
where $\alpha>0$ is the step-size and the prox operator is defined at the end of Section \ref{sec:intro}. Due to the simplicity and effectiveness of Prox-DGD, it is extended to many other settings, such as directed networks \cite{nedic2009distributed}, stochastic optimization \cite{koloskova2020unified}, and the asynchronous setting \cite{Wu25}. In particular, several recent works demonstrate the effectiveness of its stochastic extension in model training \cite{Wang2025,koloskova2020unified}.

\subsubsection{Prox-DGD from a surrogate perspective} A key step of our algorithm development is to view Prox-DGD as minimizing a surrogate function per iteration. In particular, Prox-DGD \eqref{eq:DPSD} can be equivalently expressed as the following form:
\begin{equation}\label{eq:model-based-opt}
    \bx^{k+1} = \underset{\bx\in\R^{nd}}{\operatorname{\arg\;\min}}~\f^k(\bx)+\mb{h}(\bx)+\mb{p}^k(\bx),
\end{equation}
where $\f^k(\bx)=\f(\bx^k)+\langle\nabla \f(\bx^k), \bx-\bx^k\rangle+\frac{1}{2\alpha}\|\bx-\bx^k\|^2$ is a \emph{model} (namely, surrogate function) of $\f$ and $\mb{p}^k(\bx) = \mb{p}(\bx^k)+\langle \nabla \mb{p}(\bx^k), \bx-\bx^k\rangle$ is a \emph{model} of $\mb{p}(\bx)=\frac{1}{2\alpha}\bx^T(I-\mb{W})\bx$. In general, Prox-DGD cannot converge to the optimum of problem \eqref{eq:prob} but to the optimum of the following problem under proper conditions \cite{Wu25}:
\begin{equation}\label{eq:penal_prob}
\underset{\bx\in\R^{nd}}{\operatorname{minimize}}~~\mb{f}(\bx) + \h(\bx)+\mb{p}(\bx).
\end{equation}
Although problems \eqref{eq:prob} and \eqref{eq:penal_prob} are not equivalent, their optimality gap can be arbitrarily small \cite{Wu25} by choosing a small $\alpha$. Moreover, for many applications such as model training, only a moderate accuracy is required. For the update \eqref{eq:model-based-opt}, more accurate $\f^k(\bx)$ and $\mb{p}^k(\bx)$ typically lead to faster convergence.

\subsubsection{Decentralized proximal bundle method} The main philosophy of our algorithm development is to adopt a more accurate model $\f^k$ in \eqref{eq:model-based-opt}. For distributed implementation, we require $\f^k(\bx)$ to be separable:
\begin{equation}\label{eq:model_f_history}
    \f^k(\bx) = \sum_{i=1}^n f_i^k(x_i),
\end{equation}
for some $f_i^k:\R^d\rightarrow \R$. Then, \eqref{eq:model-based-opt} can be expressed as
\begin{equation}\label{eq:model_individual}
        x_i^{k+1} = \underset{x_i\in\R^d}{\operatorname{\arg\;\min}}~f_i^k(x_i)+h_i(x_i)+\langle \nabla_i \mb{p}(\bx^k), x_i\rangle,
\end{equation}
where $\nabla_i \mb{p}(\bx^k)=\frac{1}{\alpha}\sum_{j\in\mc{N}_i} w_{ij}(x_i^k-x_j^k)$ is the $i$th block of $\nabla \mb{p}(\bx^k)$ and the constant terms $\mb{p}(\bx^k)$ and $\langle \nabla_i \mb{p}(\bx^k), x_i^k\rangle$ in \eqref{eq:model-based-opt} is dropped since it does not affect the solution. It is clear that the update \eqref{eq:model_individual} can be implemented in a distributed way. For $\mb{p}^k$, since $\mb{p}(\bx)$ is non-separable, it is difficult to find a model that is more effective than the one in Prox-DGD and, at the same time, enables the distributed implementation of \eqref{eq:model-based-opt}.

Next, we adapt the proximal bundle model \eqref{eq:general_PB} introduced for centralized optimization to the decentralized setting to design the surrogate function $f_i^k$ in \eqref{eq:model_individual}. Specifically, we let
\begin{equation}\label{eq:general_PB_ind}
    f_i^k(x_i) = m_i^k(x_i)+\frac{\|x_i-x_i^k\|^2}{2\gamma_i^k},
\end{equation}
where $m_i^k(x_i)$ is a minorant of $f_i$ and $\gamma_i^k>0$. Substituting \eqref{eq:general_PB_ind} into \eqref{eq:model_individual} yields
\begin{equation}\label{eq:syn_PB_alg}
\begin{split}
x_i^{k+1} =&~\underset{x_i\in\R^d}{\operatorname{\arg\;\min}}~m_i^k(x_i)+h_i(x_i)+\frac{\|x_i-x_i^k\|^2}{2\gamma_i^k}\\
&+\frac{1}{\alpha}\langle\sum_{j\in\mc{N}_i} w_{ij} (x_i^k-x_j^k), x_i\rangle.
\end{split}
\end{equation}
Candidates of $m_i^k$ will be discussed in Section \ref{ssec:minorant}. We refer to the algorithm as the Decentralized Proximal Bundle Method (DBPM), whose decentralized implementation is straightforward and is therefore ommited.

\subsection{Asynchronous DPBM}

When implementing the update \eqref{eq:syn_PB_alg} in an asynchronous way, information from neighbouring nodes is, in general, delayed, and not all nodes update at each iteration. To describe the asynchronous update, we use $\mc{K}_i\subseteq \mbb{N}_0$ to denote the set of iterations where node $i$ updates, i.e.,
\begin{equation}
    \mc{K}_i = \{k|~\text{node }i \text{ updates at the }k{\text{th}} \text{ iteration}\}.
\end{equation}
Then, the asynchronous implementation of \eqref{eq:syn_PB_alg} can be described as: For all $k\in\mc{K}_i$, 
\begin{equation}\label{eq:asyn_PB_alg}
\begin{split}
x_i^{k+1} =&~\underset{x_i\in\R^d}{\operatorname{\arg\;\min}}~m_i^k(x_i)+h_i(x_i)+\frac{\|x_i-x_i^k\|^2}{2\gamma_i^k}\\
&+\frac{1}{\alpha}\langle \sum_{j\in\mc{N}_i} w_{ij}(x_i^k-x_j^{s_{ij}^k}), x_i\rangle,
\end{split}
\end{equation}
where $x_j^{s_{ij}^k}$ is the most recent $x_j$ owned by node $i$ with $s_{ij}^k\in [k]$ being its iteration index, $s_{ii}^k=k$, and $k-s_{ij}^k$ represents the delay. If $k\notin \mc{K}_i$, then $x_i^{k+1}=x_i^k$.

Algorithm \ref{alg:ADPBM} details the implementation of the asynchronous DPBM, where each node is activated at discrete time points to update. We ignore the iteration index in the updates for clarity and, for each pair of neighboring nodes $i$ and $j$, we introduce $x_{ij}$ to denote the most recent $x_j$ that node $i$ has received from $j$. Each node $i$ uses a buffer ${\mathcal B}_i$ to store $x_j$'s it received from neighboring nodes, which can be executed regardless of whether $i$ is active or not. When node $i$ is activated, it reads all $x_{ij}$, $j\in\mc{N}_i$ from the buffer, performs the local update
\begin{equation}\label{eq:ADPBM}
\begin{split}
x_i\leftarrow &~\underset{z_i\in\R^d}{\operatorname{\arg\;\min}}~m_i(z_i)+h_i(z_i)+\frac{\|z_i-x_i\|^2}{2\gamma_i}\\
&+\frac{1}{\alpha}\langle\sum_{j\in\mc{N}_i} w_{ij} (x_i-x_{ij}), z_i\rangle
\end{split}
\end{equation}
and broadcasts the new $x_i$ to all its neighbors, and then enters a sleep mode.

\begin{figure*}[!htbp]
    \centering
    \begin{subfigure}[t]{0.3\textwidth}
        \includegraphics[width=0.8\textwidth, valign=t]{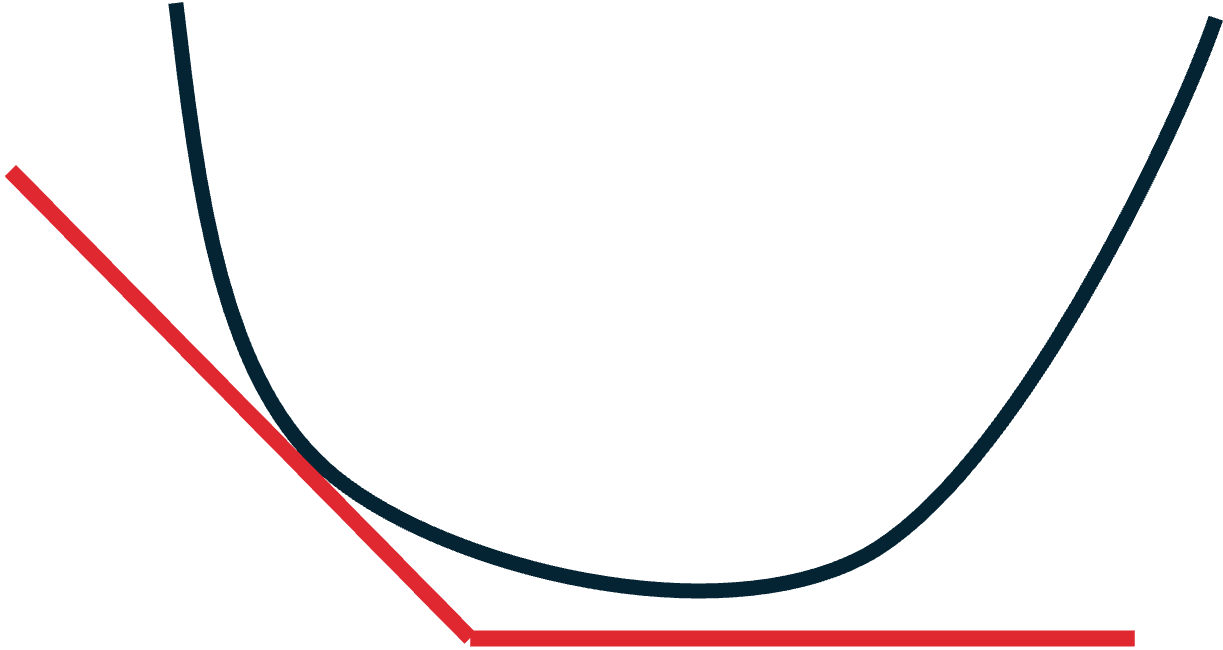}
        \vspace{0.74cm}
        \caption{Polyak}
        \label{fig:sub1}
    \end{subfigure}
    \hfill
    \begin{subfigure}[t]{0.3\textwidth}
        \includegraphics[width=0.8\textwidth, valign=t]{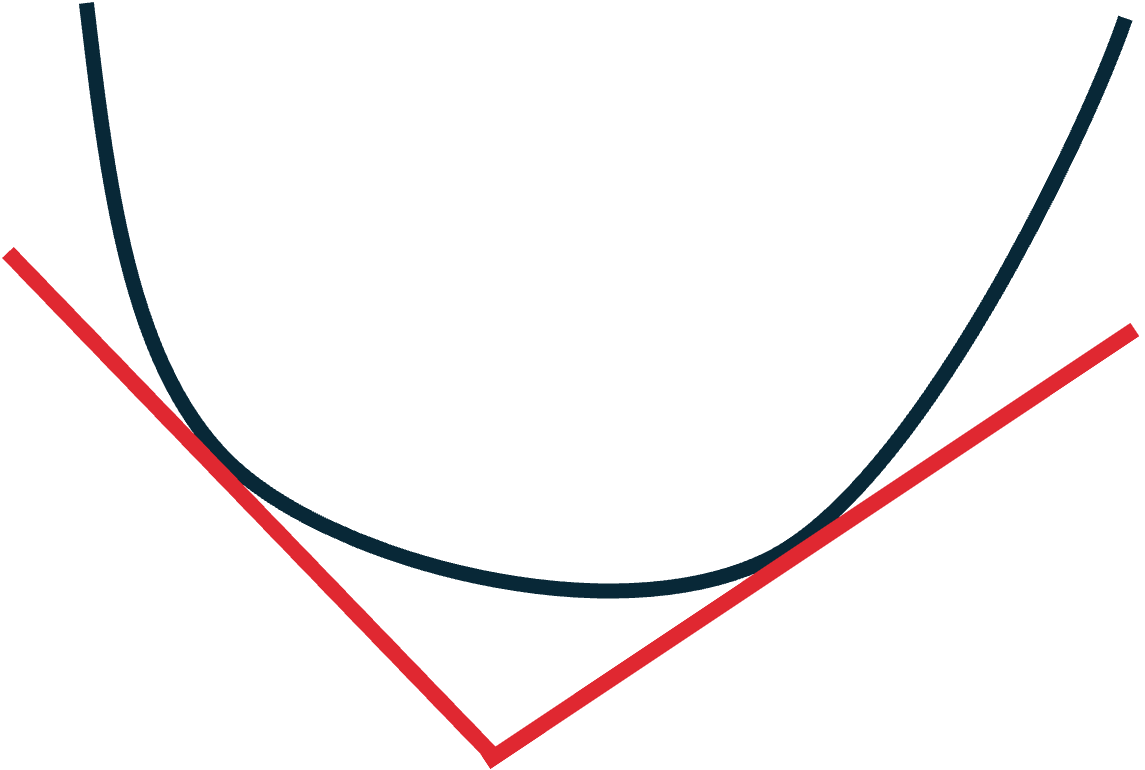}
        \vspace{0.1cm}
        \caption{Cutting-plane}
        \label{fig:sub2}
    \end{subfigure}
    \hfill
    \begin{subfigure}[t]{0.3\textwidth}
        \includegraphics[width=0.8\textwidth, valign=t]{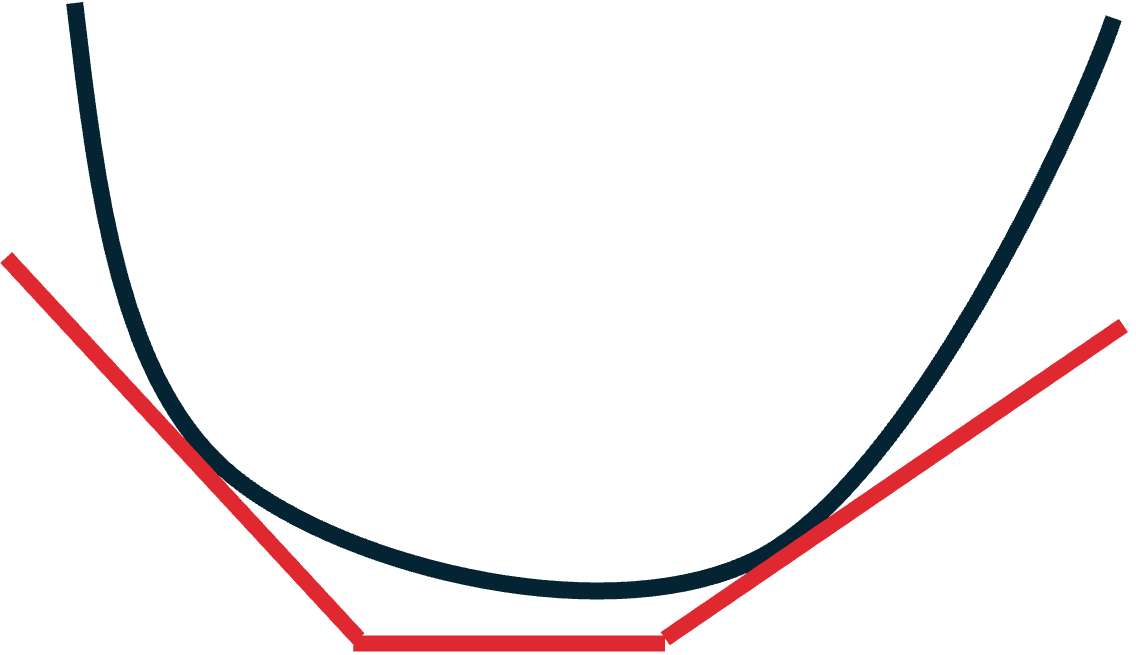}
        \vspace{0.4cm}
        \caption{Polyak cutting-plane}
        \label{fig:sub3}
    \end{subfigure}
    \caption{Surrogate functions in the deterministic Polyak model
    \eqref{eq:pol}, cutting-plane model \eqref{eq:cp}, and the Polyak cutting-plane model \eqref{eq:pol_cp}. The black curve represents $f$ and the red curves are the models.}
\label{fig:three_subfigures}
\end{figure*}

\subsection{Asynchronous Stochastic DPBM}\label{ssec:stochastic_alg}

In data-driven applications, each $f_i$ in problem \eqref{eq:prob} is often defined by a set $\mc{D}_i$ of data samples:
\begin{equation}\label{eq:sto_prob}
	f_i(x_i) = \frac{1}{|\mc{D}_i|}\sum_{\xi\in \mc{D}_i} F_i(x_i;\xi),
\end{equation}
where each $\xi$ is a data
sample, \(F(\cdot;\xi):\mathbb{R}^d\to \mathbb{R}\) is a convex loss function defined by $\xi$, and $f_i(x_i)$ is the loss across the dataset $\mc{D}_i$. When $\mc{D}_i$ is large, computing the exact value and gradient of $f_i$ is often prohibitively expensive and a more reasonable and typical way is to use the stochastic function value and gradient, i.e.,
\begin{equation}\label{eq:sto_est}
    F_i(x_i,\mc{D}_i'),\quad \nabla F_i(x_i,\mc{D}_i')
\end{equation}
for a subset $\mc{D}_i'\subseteq \mc{D}_i$, where \[F_i(x_i,\mc{D}_i')\overset{\triangle}{=} \frac{1}{|\mc{D}_i'|}\sum_{\xi\in \mc{D}_i'} F_i(x_i;\xi).\]
The stochastic function value and gradient \eqref{eq:sto_est} are unbiased estimations of $f_i(x_i)$ and $\nabla f_i(x_i)$ if $\mc{D}_i'$ is drawn from $\mc{D}_i$ uniformly randomly.

When $f_i$ in problem \eqref{eq:prob} takes the form of \eqref{eq:sto_prob}, we construct $m_i^k$ in \eqref{eq:asyn_PB_alg} with stochastic function values and gradients (see Section \ref{ssec:minorant}), which gives the asynchronous stochastic DPBM. The implementation is similar to Algorithm \ref{alg:ADPBM}. 

 \begin{algorithm}[t!]
    \makeatletter
    \renewcommand\footnoterule{%
        \kern-3\p@
    \hrule\@width.4\columnwidth
    \kern2.6\p@}
    \makeatother
    \caption{Asynchronous DPBM}
    \begin{minipage}{\linewidth}
    \renewcommand{\thempfootnote}{\arabic{mpfootnote}}
    
    \label{alg:ADPBM}
		\begin{algorithmic}[1]
			\STATE {\bfseries Initialization:} All the nodes agree on $\alpha>0$, and cooperatively set $w_{ij}$ $\forall \{i,j\}\in\mc{E}$.
			\STATE Each node $i\in\mc{V}$ chooses $x_i\in\mathbb{R}^d$, creates a local buffer $\mc{B}_i$, and shares $x_i$ with all neighbors in $\mc{N}_i$.
			\FOR{each node $i\in \mc{V}$}
			\STATE 
			keep \emph{receiving $x_j$ from neighbors and store $x_j$ in $\mc{B}_i$ until activation}\footnote{In the first iteration, each node $i\in\mc{V}$ can be activated only after it received $x_j$ from all $j\in\mc{N}_i$. If for some $j\in\mc{N}_i$, node $i$ receives multiple $x_j$'s, then it only stores the most recently received one and drop the remaining ones.}.
			\STATE set $x_{ij}=x_j$ for all $x_j\in\mc{B}_i$.
			\STATE empty $\mc{B}_i$.
			\STATE update $x_i$ according to \eqref{eq:ADPBM}.
			\STATE send $x_i$ to all neighbors $j\in\mc{N}_i$.
			\ENDFOR
			\STATE \textbf{Until} a termination criterion is met.
		\end{algorithmic}
\end{minipage}
\end{algorithm} 

\subsection{Candidates of the Minorant}\label{ssec:minorant}

This subsection provides several candidates for the minorant $m_i^k$ in both the deterministic and stochastic settings.

\subsubsection{Deterministic minorants} We impose the following assumption on $m_i^k$, which is typical for proximal bundle methods and is also required for our convergence analysis in Section \ref{sec:conv_ana}.
\begin{assumption}\label{asm:tilde_f}
    For each $i\in\mc{V}$, it holds that
    \begin{enumerate}[i)]
        \item $m_i^k(x_i)$ is convex;
        \item for some $\beta_i\ge 0$ and any $x_i\in\R^d$,
        \begin{equation}\label{eq:mk_key_prop}
            m_i^k(x_i) \le f_i(x_i)\le m_i^k(x_i)+\frac{\beta_i}{2}\|x_i-x_i^k\|^2.
        \end{equation}
    \end{enumerate}
\end{assumption}
Assumption \ref{asm:tilde_f} can be satisfied by the following models.

\begin{itemize}[leftmargin=8pt]
    \item[-] {\bf Polyak model}: The model originates from the Polyak step-size \cite{polyak1987introduction} for gradient descent, and takes the form of
    \begin{equation}\label{eq:pol}
        m_i^k(x_i)=\max\{f_i(x_i^k)+\langle \nabla f_i(x_i^k), x_i-x_i^k \rangle, c_i^f\},
    \end{equation}
    where $c_i^f \le \inf_{x_i} f_i(x_i)$ is a lower bound of $f_i$. In centralized optimization, the steepest descent method with Polyak step-size minimizes a function $f$ by setting $x^{k+1}$ as a minimizer of $m^k(x)$ for all $k\ge 0$, where $m^k(x)$ is given by \eqref{eq:pol} with $f_i$ replaced by $f$. This model and its variants are shown to be particularly effective in stochastic optimization \cite{loizou2021stochastic,wang2023generalized}.
    \vspace{0.2cm}
    \item[-] {\bf Cutting-plane model}: This model takes the maximum of several cutting planes at historical iterates:
    \begin{align}\label{eq:cp}
     \!m_i^k(x_i) \!=\!\max\{f_i(x_i^t)\!+\!\langle \nabla f_i(x_i^t), x_i-x_i^t\rangle,t\in \mc{S}_i^k\},
    \end{align}
    where $\mc{S}_i^k$ is a subset of $\mc{K}_i$ satisfying $k\in \mc{S}_i^k$. The model is adopted in the cutting-plane method \cite{kelley1960cutting} and is also typical in the bundle method \cite{cederberg2025}.
    \vspace{0.2cm}
    \item[-] {\bf Polyak cutting-plane model}: It is natural to combine the Polyak model and the cutting-plane model, which yields the following Polyak cutting-plane model:
    \begin{align}\label{eq:pol_cp}
     m_i^k(x_i) =\!\max\{f_i(x_i^t)\!+\!\!\langle \nabla f_i(x_i^t), x_i-x_i^t\rangle,t\in \mc{S}_i^k,c_i^f\},
    \end{align}
    where all the parameters are introduced below \eqref{eq:pol} or \eqref{eq:cp}.
    \vspace{0.1cm}
    \item[-] {\bf Two-cut model}: The model is defined in an iterative way. Set $m_i^0(x_i)=f_i(x_i^0)+\langle \nabla f_i(x_i^0), x_i-x_i^0\rangle$. For each $k\ge 0$, if $k\notin \mc{K}_i$, $m_i^{k+1}(x_i)=m_i^k(x_i)$; Otherwise,
    \begin{equation}\label{eq:two_cut}
    \begin{split}
        m_i^{k+1}(x_i) =
            &\max\{m_i^{k}(x_i^{k+1})+\langle \hat{g}_i^{k},x_i-x_i^{k+1}\rangle,\\
            & f_i(x_i^{k+1})+\langle \nabla f_i(x_i^{k+1}), x_i-x_i^{k+1}\rangle\},
    \end{split}
    \end{equation}
    where $\hat{g}_i^{k}\in \partial m_i^{k}(x_i^{k+1})$.
\end{itemize}

The above models are originally proposed in centralized optimization, and we adapt them to decentralized optimization. In general, if $m_i^k$ approximates $f_i$ with a higher accuracy, then we can choose a larger $\gamma_i^k$ so that $f_i^k$ in \eqref{eq:general_PB_ind} approximates $f_i$ better and the update makes a larger progress. Fig. \ref{fig:three_subfigures} plots the Polyak model, the cutting-plane model, and the Polyak cutting-plane model, where the Polyak cutting-plane model incorporates more information and yields a higher accuracy. 

Lemma \ref{lemma:det_key_prop} shows that Assumption \ref{asm:tilde_f} can be satisfied by the aforementioned models under proper assumptions on $f_i$.
\begin{lemma}\label{lemma:det_key_prop}
    Suppose that Assumption \ref{asm:convex} holds and each $f_i$ is $\beta_i$-smooth. For the models \eqref{eq:pol}--\eqref{eq:two_cut}, Assumption \ref{asm:tilde_f} holds.
\end{lemma}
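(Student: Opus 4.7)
The plan is to verify the two parts of Assumption \ref{asm:tilde_f} for each of the four candidate minorants in turn, exploiting the common structure that every model is a pointwise maximum of affine functions anchored at historical iterates (possibly together with a constant piece $c_i^f$). Part (i), convexity of $m_i^k$, is immediate for the Polyak, cutting-plane, and Polyak cutting-plane models, since a pointwise max of affine functions is convex. For the two-cut model \eqref{eq:two_cut} I would proceed by induction: $m_i^0$ is affine, and if $m_i^k$ is convex then $m_i^{k+1}$ is the max of two affine functions of $x_i$ (the coefficients $m_i^k(x_i^{k+1})$, $\hat g_i^k$, $f_i(x_i^{k+1})$, $\nabla f_i(x_i^{k+1})$ are all constants), hence convex.

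For part (ii), I would split the sandwich inequality \eqref{eq:mk_key_prop} into a lower and an upper bound. The lower bound $m_i^k \le f_i$ is verified piecewise: every affine piece of the form $f_i(x_i^t) + \langle \nabla f_i(x_i^t), x_i - x_i^t\rangle$ satisfies $\le f_i(x_i)$ by convexity of $f_i$ from Assumption \ref{asm:convex}, while the constant piece $c_i^f$ satisfies $c_i^f \le \inf f_i \le f_i(x_i)$ by construction. Taking the maximum preserves the inequality, which handles \eqref{eq:pol}, \eqref{eq:cp}, and \eqref{eq:pol_cp}. For the two-cut model, the lower bound again goes by induction: the second piece is dominated by $f_i$ by convexity, and using $\hat g_i^k \in \partial m_i^k(x_i^{k+1})$ together with convexity of $m_i^k$ gives $m_i^k(x_i^{k+1}) + \langle \hat g_i^k, x_i - x_i^{k+1}\rangle \le m_i^k(x_i)$, which by the inductive hypothesis is $\le f_i(x_i)$.

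The upper bound in \eqref{eq:mk_key_prop} is the one that uses $\beta_i$-smoothness of $f_i$. The descent lemma yields
\begin{equation*}
f_i(x_i) \le f_i(x_i^k) + \langle \nabla f_i(x_i^k), x_i - x_i^k\rangle + \frac{\beta_i}{2}\|x_i - x_i^k\|^2,
\end{equation*}
so it suffices to show that $m_i^k(x_i) \ge f_i(x_i^k) + \langle \nabla f_i(x_i^k), x_i - x_i^k\rangle$, i.e., that the ``current-iterate'' cut is dominated by $m_i^k$. For \eqref{eq:pol}, \eqref{eq:cp}, and \eqref{eq:pol_cp} this is built into the definition, since the index $t = k$ is always in $\mc{S}_i^k$ (for \eqref{eq:cp}, \eqref{eq:pol_cp}) or appears explicitly (for \eqref{eq:pol}).

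The main obstacle is bookkeeping this ``current-iterate cut'' property for the two-cut model \eqref{eq:two_cut} under asynchrony. When $k \in \mc{K}_i$, the recursion builds into $m_i^{k+1}$ a cut anchored exactly at $x_i^{k+1}$, giving the required bound at $x_i^{k+1}$; but when $k \notin \mc{K}_i$, neither $x_i$ nor the model changes, so one must trace back to the most recent $k' \le k$ for which $k' - 1 \in \mc{K}_i$ (or to $k' = 0$), observe that both $x_i^k = x_i^{k'}$ and $m_i^k = m_i^{k'}$, and conclude that the cut anchored at $x_i^{k'}$ already contained in $m_i^{k'}$ is anchored at $x_i^k$. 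Combined with the descent lemma, this yields the upper bound and completes the proof.
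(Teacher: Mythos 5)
Your proposal is correct and follows essentially the same route as the paper's proof: convexity from the pointwise maximum of affine functions (with induction for the two-cut model), the lower bound $m_i^k\le f_i$ piecewise and by induction, and the upper bound by combining the descent lemma with the fact that $m_i^k(x_i)\ge f_i(x_i^k)+\langle\nabla f_i(x_i^k),x_i-x_i^k\rangle$. Your extra bookkeeping for the two-cut model under asynchrony (tracing $x_i^k$ and $m_i^k$ back to the last update) is a valid justification of inequality \eqref{eq:tilde_fik_largerthan_fik}, which the paper asserts without spelling out that induction.
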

\begin{proof}
See Appendix \ref{proof:lemma_det_key_prop}.
\end{proof}

\subsubsection{Stochastic minorants} For problem \eqref{eq:prob} with data-driven $f_i$ (see \eqref{eq:sto_prob}), we construct the minorant $m_i^k$ with stochastic function values and gradients, as discussed in Section \ref{ssec:stochastic_alg}.

Similar to the deterministic case, we make assumptions on the stochastic minorant $m_i^k$. To introduce the assumption, we suppose that for all $i\in\mc{V}$ and $k\in\mc{K}_i$, the stochastic functions and gradients are computed with respect to a random batch $\mc{D}_i^k\subseteq \mc{D}_i$ of samples, and define
\begin{equation}\label{eq:Fk_def}
    \mc{F}^k = \cup_{i\in\mc{V}}~\{\mc{D}_i^t|~t\le k, t\in\mc{K}_i\}.
\end{equation}
We also suppose that the optimal solution to problem \eqref{eq:penal_prob} exists, which holds under mild conditions (e.g., coercive $\phi_i$) \cite{Wu25}. Moreover, the optimal solution of problem \eqref{eq:penal_prob} is typically sub-optimal to problem \eqref{eq:prob} with error bound $O(\sqrt{\alpha})$ or $O(\alpha)$ under certain conditions \cite[Lemmas 2, 3]{Wu25}.

\begin{assumption}\label{asm:opt_exist}
    The optimal solution of problem \eqref{eq:penal_prob} exists.
\end{assumption}

Now, we are ready to introduce our assumption on $m_i^k$.
\begin{assumption}\label{asm:stoch_tilde_f}
    For each $i\in\mc{V}$ and $k\in\mc{K}_i$, it holds that
    \begin{enumerate}[label= \roman*), leftmargin = 0.7cm]
        \item $m_i^k(x_i)$ is convex;
        \item there exists $\beta_i,\epsilon_i>0$ such that for any $x_i\in\R^d$, 
        \begin{equation}\label{eq:sto_smooth_tilde_sigma}
            f_i(x_i)\le \mbb{E}[m_i^k(x_i)+\frac{\beta_i}{2}\|x_i-x_i^k\|^2|~\mc{F}^k]+\epsilon_i;
        \end{equation}
        \item for an optimal solution $\bx^\star=((x_1^\star)^T,\ldots,(x_n^\star)^T)^T$ of problem \eqref{eq:penal_prob} and some $\epsilon_i^\star\ge 0$,
        \begin{equation}\label{eq:sto_smooth_tilde_sigma_star}
            \mbb{E}[m_i^k(x_i^\star)|~\mc{F}^k]-f_i(x_i^\star) \le \epsilon_i^\star.
        \end{equation}
    \end{enumerate}
\end{assumption}
In the stochastic setting, Assumption \ref{asm:tilde_f} is difficult to guarantee even in the expectation sense. Instead, the conditions in Assumption \ref{asm:stoch_tilde_f} are more relaxed and, under the following assumption, can be satisfied by stochastic variants of \eqref{eq:pol}--\eqref{eq:two_cut}.



\begin{assumption}\label{asm:stochasticity}
For each $i\in\mc{V}$, it holds that
\begin{enumerate}[i)]
    \item for any $\xi\in\mc{D}_i$, $F_i(x_i;\xi)$ is proper, closed, and convex;
    \item each $\mc{D}_i^k$, $k\in\mc{K}_i$ is uniformly randomly drawn from $\mc{D}_i$;
    \item there exist $\sigma,\sigma^\star\ge 0$ such that for each $i\in\mc{V}$ and $k\in\mc{K}_i$,
    \begin{align}
        &\mbb{E}[\|\nabla F_i(x_i^k;\mc{D}_i^k)-\nabla f_i(x_i^k)\|^2|~ \mc{D}_i^k]\le \sigma^2,\label{eq:sigma}\\
        &\mbb{E}[(F_i(x_i^\star; \mc{D}_i^k) - f_i(x_i^\star))^2|~ \mc{D}_i^k] \le (\sigma^\star)^2.\label{eq:sigma_f}
    \end{align}
\end{enumerate}
\end{assumption}

\begin{lemma}\label{lemma:assum_sto}
    Suppose that Assumptions \ref{asm:convex}, \ref{asm:opt_exist}, \ref{asm:stochasticity} hold and $F_i(x_i;\xi)$ is $L_i$-smooth for all $i\in\mc{V}$ and $\xi\in\mc{D}_i$. For the models \eqref{eq:pol}--\eqref{eq:two_cut} where $f_i(x_i^t)$ and $\nabla f_i(x_i^t)$ are replaced by $F_i(x_i^t;\mc{D}_i^t)$ and $\nabla F_i(x_i^t;\mc{D}_i^t)$, respectively, and $c_i^f\le\min_{\xi\in \mc{D}_i} F_i(x_i;\xi)$, Assumption \ref{asm:stoch_tilde_f} holds. Specifically, 
    \[\epsilon_i = \frac{\sigma^2}{2L_i}, \quad \beta_i=2L_i\] 
    for all four models, $\epsilon_i^\star=\sigma^\star$ for \eqref{eq:pol}, $\epsilon_i^\star=\sqrt{\max_{k\in\mc{K}_i}|\mc{D}_i^k|}\sigma^\star$ for \eqref{eq:cp}--\eqref{eq:pol_cp}, and $\epsilon_i^\star=\max_{\xi\in\mc{D}_i} F(x_i^\star,\xi)-f_i(x_i^\star)$ for \eqref{eq:two_cut}.
\end{lemma}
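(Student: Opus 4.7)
The plan is to verify the three parts of Assumption~\ref{asm:stoch_tilde_f} for each of the four stochastic models separately. Part~(i), convexity, comes essentially for free: every model is a pointwise maximum of affine functions of $x_i$ and (possibly) a constant, so its convexity follows from the convexity of $F_i(\cdot;\xi)$ guaranteed by Assumption~\ref{asm:stochasticity}(i) together with the fact that the pointwise maximum of convex functions is convex.

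For part~(ii), the key observation is that every one of the four models dominates the stochastic linearization at the current iterate, namely
\[m_i^k(x_i) \ge F_i(x_i^k;\mc{D}_i^k) + \langle \nabla F_i(x_i^k;\mc{D}_i^k),\, x_i - x_i^k\rangle.\]
Combining the $L_i$-smoothness of $f_i$ (inherited from $F_i(\cdot;\xi)$) with this inequality and writing $f_i(x_i^k)$, $\nabla f_i(x_i^k)$ as the stochastic value, gradient at $x_i^k$ plus zero-mean noise $e_F^k$, $e_g^k$ leads to an almost sure bound of the form
\[f_i(x_i) \le m_i^k(x_i) + \tfrac{L_i}{2}\|x_i-x_i^k\|^2 + e_F^k + \langle e_g^k,\, x_i-x_i^k\rangle.\]
Applying Young's inequality to the cross term in the form $\langle e_g^k, x_i-x_i^k\rangle \le \|e_g^k\|^2/(2L_i) + \tfrac{L_i}{2}\|x_i-x_i^k\|^2$ and then taking $\mbb{E}[\,\cdot\,|\,\mc{F}^k]$ kills $e_F^k$ in expectation and invokes the variance bound~\eqref{eq:sigma} on $\|e_g^k\|^2$, producing the claimed $\beta_i=2L_i$ and $\epsilon_i=\sigma^2/(2L_i)$ uniformly over the four models.

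Part~(iii) is where the four models genuinely differ. For the Polyak model, convexity of $F_i(\cdot;\xi)$ gives
\[F_i(x_i^k;\mc{D}_i^k)+\langle \nabla F_i(x_i^k;\mc{D}_i^k),\, x_i^\star-x_i^k\rangle \le F_i(x_i^\star;\mc{D}_i^k),\]
and since $c_i^f$ is a per-sample lower bound on $F_i$, the constant branch is also dominated by $F_i(x_i^\star;\mc{D}_i^k)$; hence $m_i^k(x_i^\star)\le F_i(x_i^\star;\mc{D}_i^k)$, and Jensen applied to~\eqref{eq:sigma_f} yields $\epsilon_i^\star=\sigma^\star$. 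For the cutting-plane and Polyak cutting-plane models, applying the same per-cut convexity bound over each $t\in\mc{S}_i^k$ gives $m_i^k(x_i^\star)\le\max_{t\in\mc{S}_i^k}F_i(x_i^\star;\mc{D}_i^t)$, and the expected maximum will be controlled by a second-moment estimate exploiting~\eqref{eq:sigma_f} at the per-sample level, producing the $\sqrt{\max_k|\mc{D}_i^k|}\,\sigma^\star$ factor. For the two-cut model, the approach is an induction on $k$ showing $m_i^k(x_i^\star)\le\max_{\xi\in\mc{D}_i}F_i(x_i^\star;\xi)$ deterministically: convexity of $m_i^k$ together with $\hat{g}_i^k\in\partial m_i^k(x_i^{k+1})$ bounds the first branch of~\eqref{eq:two_cut} at $x_i^\star$ by $m_i^k(x_i^\star)$, while convexity of each $F_i(\cdot;\xi)$ bounds the second branch by $F_i(x_i^\star;\mc{D}_i^{k+1})\le\max_\xi F_i(x_i^\star;\xi)$, closing the induction.

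The main obstacle will be the expected-maximum step for the cutting-plane and Polyak cutting-plane models: the target factor $\sqrt{\max_k|\mc{D}_i^k|}$ scales with batch size rather than with $|\mc{S}_i^k|$, which indicates that the argument must pass through a per-sample deviation bound extracted from the batch-level second moment in~\eqref{eq:sigma_f} rather than through a naive $\sqrt{|\mc{S}_i^k|}$ union estimate, and getting this exponent right requires careful bookkeeping of how the per-sample and batch-averaged variances are related.
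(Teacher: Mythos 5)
Your plan matches the paper's proof almost step for step: convexity via pointwise maxima of affine functions, part (ii) via the lower bound $m_i^k(x_i)\ge F_i(x_i^k;\mc{D}_i^k)+\langle\nabla F_i(x_i^k;\mc{D}_i^k),x_i-x_i^k\rangle$, Young's inequality with parameter $L_i$ on the gradient-noise cross term, and the $L_i$-smoothness of $f_i$, yielding $\beta_i=2L_i$ and $\epsilon_i=\sigma^2/(2L_i)$; part (iii) for the Polyak model via per-cut convexity plus Jensen, and for the two-cut model via exactly the deterministic induction you describe. All of that is sound and is the paper's argument.

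The one place you go astray is the "main obstacle" you identify for the cutting-plane and Polyak cutting-plane models. The paper does precisely the "naive" estimate you dismiss: it bounds $\max_{t\in\mc{S}_i^k}\bigl(F_i(x_i^\star;\mc{D}_i^t)-f_i(x_i^\star)\bigr)\le\sqrt{\sum_{t\in\mc{S}_i^k}(F_i(x_i^\star;\mc{D}_i^t)-f_i(x_i^\star))^2}$, takes conditional expectations, applies $\mbb{E}[\sqrt{\cdot}]\le\sqrt{\mbb{E}[\cdot]}$ and \eqref{eq:sigma_f} termwise, and concludes with the constant $\sqrt{\max_{k\in\mc{K}_i}|\mc{S}_i^k|}\,\sigma^\star$ --- i.e., the factor scales with the \emph{number of cuts}, not the batch size. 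The $|\mc{D}_i^k|$ appearing in the lemma statement is inconsistent with the paper's own proof (whose concluding sentence states $\sqrt{\max_{k\in\mc{K}_i}|\mc{S}_i^k|}\,\sigma^\star$) and is best read as a typo for $|\mc{S}_i^k|$. The alternative route you sketch --- extracting a per-sample deviation bound from \eqref{eq:sigma_f} --- cannot work as stated, because \eqref{eq:sigma_f} only controls the second moment of the \emph{batch-averaged} value $F_i(x_i^\star;\mc{D}_i^k)$ and carries no information about individual samples $\xi$. So you should simply carry out the $\ell_2$ union estimate over $\mc{S}_i^k$ and report the constant $\sqrt{\max_{k\in\mc{K}_i}|\mc{S}_i^k|}\,\sigma^\star$; with that substitution your proof is complete and coincides with the paper's.
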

\begin{proof}
See Appendix \ref{proof:assum_sto}.
\end{proof}

\subsection{Solve the Subproblem \eqref{eq:asyn_PB_alg}}

The efficiency of the algorithm heavily relies on that of solving the subproblem \eqref{eq:asyn_PB_alg}, which reduces to \eqref{eq:syn_PB_alg} in the synchronous setting. In this subsection, we introduce an efficient dual approach to solve \eqref{eq:asyn_PB_alg} for $m_i^k$ in \eqref{eq:pol}--\eqref{eq:two_cut}. 

In \eqref{eq:pol}--\eqref{eq:two_cut}, $m_i^k$ are piece-wise linear functions. For clarity, we focus on the following simplified problem of \eqref{eq:asyn_PB_alg}:
\begin{equation}\label{eq:simplified_prob}
    \underset{x\in\R^d}{\operatorname{minimize}}~\max\{a^tx+b^t, t\in [T]\}+h(x)+\frac{1}{2\gamma}\|x-\tilde{x}\|^2,
\end{equation}
where we omit the subscript $i$ and superscript $k$ in $x_i$, $h_i$, and $\gamma_i^k$ for simplicity, use $a^tx+b^t$ to represent the $t$th affine function in $m_i^k$, and set \begin{equation}\label{eq:tilde_x}
    \tilde{x}=x_i^k-\frac{\gamma_i^k}{\alpha}\sum_{j\in\mc{N}_i} w_{ij}(x_i^k-x_j^{s_{ij}^k}).
\end{equation}
Further, we transform problem \eqref{eq:simplified_prob} into the equivalent problem
\begin{equation}\label{eq:lin_prob}
    \begin{split}
        \underset{x\in\R^d,y\in\R}{\operatorname{minimize}}~~&~ y+h(x)+\frac{\|x-\tilde{x}\|^2}{2\gamma}\\
        \operatorname{subject~to}~&~a^tx+b^t\le y,~t\in [T].
    \end{split}
\end{equation}
For problem \eqref{eq:lin_prob}, the variable dimension of its Lagrange dual problem is $T$, which is typically much smaller than the dimension $d$ of the primal variable in practical implementations. For example, $T=2$ in the Polyak model \eqref{eq:pol} and the two-cut model \eqref{eq:two_cut}, and $T=2,5,10$ in the experiments in \cite{cederberg2025} which adopts the cutting-plane model \eqref{eq:cp}. This makes dual approaches particularly attractive for solving \eqref{eq:lin_prob} since problems with lower dimension are often easier to solve. 

To introduce the dual problem, we define $\mb{a}\in\R^T$ and $\mb{b}\in \R^{d\times T}$ as the stackings of $\{a^{t}\}_{t=1}^T$ and $\{b^{t}\}_{t=1}^T$, respectively. 
Also define the Moreau envelope of $h$ as: for any $z\in\R^d$,
\begin{align*}
    M_{\gamma h}(z) &= \min_{x\in\R^d}~h(x)+\frac{1}{2\gamma}\|x-z\|^2.
\end{align*}
Then, by \cite{cederberg2025}, the dual problem of \eqref{eq:lin_prob} is
\begin{equation}\label{eq:dual_sub}
    \begin{split}
        \operatorname{maximize}~~&~q(v)\\
        \operatorname{subject~to}~&~\mb{1}^Tv = 0,~v\ge \mb{0},
    \end{split}
\end{equation}
where $v\in\R^T$ is the dual variable and
\[\begin{split}
    q(v)=&\frac{\|\tilde{x}-\gamma \mb{b}^Tv\|^2}{2\gamma}+M_{\gamma h}(z-\gamma \mb{b}^Tv)+\mb{a}^Tv
\end{split}\]
with $\tilde{x}$ defined in \eqref{eq:tilde_x}. Note that the Moreau envelope $M_{\gamma h}$ is smooth \cite[Proposition 12.29]{bauschke2011convex} and its gradient is
\[\begin{split}
    &\nabla M_{\gamma h}(\tilde{x}-\gamma \mb{b}^Tv)=-\mb{b}(\tilde{x}-\gamma \mb{b}^Tv-\operatorname{prox}_{\gamma h}(\tilde{x}-\gamma \mb{b}^Tv)).
\end{split}\]
Therefore, $q$ is also smooth and its gradient is easy to solve when the proximal operation on $h$ is easy to compute. Moreover, the constraint set of \eqref{eq:dual_sub} is the simplex, and projection onto it can be implemented efficiently \cite{condat2016fast}. As a result, we can use projected gradient descent with adaptive step-sizes \cite{malitsky2024adaptive} or FISTA \cite{beck2009fast} to solve problem \eqref{eq:dual_sub}, which requires only the computation of $\nabla q$ and the projection onto the feasible region of problem \eqref{eq:dual_sub}. Specifically, when $h = \|\cdot\|_1$, $d = 100, 000$, and $T = 15$, applying FISTA
to solve \eqref{eq:dual_sub} only takes $19 \sim 40$ iterations and $0.1 \sim 0.2$ second to reach an accuracy of $10^{-7}$ in a PC with the Apple M3 8-core CPU. Once the optimum $v^{\text{opt}}$ of \eqref{eq:dual_sub} is solved, according to \cite{cederberg2025}, the optimum of \eqref{eq:simplified_prob} can be recovered by
\[x^{\text{opt}} = \operatorname{prox}_{\gamma h}(\tilde{x}-\gamma \mb{b}^Tv^{\text{opt}}).\]

\section{Convergence Analysis}\label{sec:conv_ana}

This section analyses the convergence of the proposed algorithms. Since the asynchronous setting is more general than the synchronous one, we only analyse the convergence of the asynchronous DPBM and the asynchronous stochastic DPBM, which encompasses the convergence analysis of DPBM and the stochastic DPBM.

We consider two standard asynchrony models, including the partial asynchrony model and the total asynchrony model \cite{bertsekas2015parallel}.


    \begin{assumption}[partial asynchrony]\label{asm:partialasynchrony}
		There exist non-negative integers $B$ and $D$ such that
		\begin{enumerate}[i)]
			\item for every $i\in\mc{V}$ and for every $k\in\N_0$, at least one element in the set $\{k,\ldots,k+B\}$ belongs to $\mc{K}_i$;
			\item there holds
			\begin{equation}\label{eq:sijk_range}
			k-D \le s_{ij}^k \le k
			\end{equation}
			for all $i\in\mc{V}$, $j\in\mc{N}_i$, and $k\in \mc{K}_i$.
		\end{enumerate}
	\end{assumption}
    The parameters $B$ and $D$ characterize the minimum update frequency and the maximum delay, respectively, and if $B=D=0$, then Assumption \ref{asm:partialasynchrony} yields the synchronous setting where all nodes update at every iteration and there are no information delays.

	\begin{assumption}[total asynchrony]\label{asm:totalasynchrony}
		The following holds:
		\begin{enumerate}[i)]
			\item $\mc{K}_i$ is an infinite subset of $\N_0$ for each $i\in\mc{V}$;
			\item $\lim_{k\rightarrow+\infty} s_{ij}^k= +\infty$ for any $i\in\mc{V}$ and $j\in\mc{N}_i$.
		\end{enumerate}
	\end{assumption}
    Assumption \ref{asm:totalasynchrony} is more relaxed than Assumption \ref{asm:partialasynchrony}, where condition i) requires each node to update infinitely many times, and condition ii) allows the information delays $k-s_{ij}^k$ to grow arbitrarily large, but requires that old information must eventually be purged from the system. 

\subsection{Deterministic Algorithm}

We first study the setting where $m_i^k$ is constructed using deterministic function values and gradients. To this end, let $\bx^\star$ be an arbitrary optimal solution of problem \eqref{eq:penal_prob}.

\begin{theorem}\label{thm:epsilon_0}
    Suppose that Assumptions \ref{asm:convex}-- \ref{asm:opt_exist} hold. Let $\{\bx^k\}$ be generated by the update \eqref{eq:asyn_PB_alg}. If
    \begin{equation}\label{eq:step-cond}
        \gamma_i^k=\gamma_i \in \left[0, \frac{1}{\beta_i+\frac{1-w_{ii}}{\alpha}}\right)
    \end{equation}
    for all $i\in\mc{V}$ and $k\in\N_0$, then
    \begin{enumerate}[1)]
        \item under Assumption \ref{asm:partialasynchrony}, $\{\bx^k\}$ converges to an optimal solution of problem \eqref{eq:penal_prob}. If, in addition, each $\phi_i$ is $\theta_i$-strongly convex for some $\theta_i>0$, then
        \begin{equation}\label{eq:conv_linear}
            \|x_i^{k}-x_i^\star\|^2\le \rho^{\lfloor k/(B+D+1)\rfloor}\max_{j\in\mc{V}}\|x_j^0-x_j^\star\|^2,
        \end{equation}
        where $\rho=1/(1+\min_{i\in\mc{V}} \gamma_i\theta_i)\in(0,1)$.
        \item under Assumption \ref{asm:totalasynchrony}, if each $\phi_i$ is $\theta_i$-strongly convex for some $\theta_i>0$, then $\{\bx^k\}$ converges to an optimal solution of problem \eqref{eq:penal_prob}.
        \end{enumerate}
\end{theorem}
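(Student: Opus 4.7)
The plan is to establish a single per-node "quasi-descent" inequality, sum it into a Lyapunov quantity, and then specialize the resulting recursion to the two asynchrony regimes.

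\textbf{Step 1: One-step inequality per active node.} For each $k\in\mc{K}_i$, the subproblem in \eqref{eq:asyn_PB_alg} is the argmin of a $(1/\gamma_i)$-strongly convex objective, so the three-point (prox) inequality applied at $x_i^\star$ gives
\[
\tfrac{1}{2\gamma_i}\|x_i^{k+1}-x_i^\star\|^2
\le \tfrac{1}{2\gamma_i}\|x_i^k-x_i^\star\|^2
+\big[m_i^k(x_i^\star)-m_i^k(x_i^{k+1})\big]
+\big[h_i(x_i^\star)-h_i(x_i^{k+1})\big]
-\tfrac{1}{2\gamma_i}\|x_i^{k+1}-x_i^k\|^2
+\tfrac{1}{\alpha}\langle q_i^k,\,x_i^\star-x_i^{k+1}\rangle,
\]
with $q_i^k=\sum_{j\in\mc{N}_i}w_{ij}(x_i^k-x_j^{s_{ij}^k})$. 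Invoking the sandwich in Assumption \ref{asm:tilde_f}, $m_i^k(x_i^\star)\le f_i(x_i^\star)$ and $m_i^k(x_i^{k+1})\ge f_i(x_i^{k+1})-\tfrac{\beta_i}{2}\|x_i^{k+1}-x_i^k\|^2$, collapses the model terms into $\phi_i(x_i^\star)-\phi_i(x_i^{k+1})$ plus a controllable residual $\big(\tfrac{\beta_i}{2}-\tfrac{1}{2\gamma_i}\big)\|x_i^{k+1}-x_i^k\|^2$, which is non-positive by the first part of the step-size condition \eqref{eq:step-cond}.

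\textbf{Step 2: Aggregate and eliminate the coupling.} Summing over $i$ and writing $e_i^k=x_i^k-x_i^\star$, a KKT condition for \eqref{eq:penal_prob} yields a subgradient $g_i^\star\in\partial\phi_i(x_i^\star)$ with $g_i^\star=-\tfrac{1}{\alpha}\sum_j w_{ij}(x_i^\star-x_j^\star)$. Under $\theta_i$-strong convexity of $\phi_i$, this turns the aggregated $\phi$-gap into $\tfrac{1}{\alpha}\langle q_i^\star,e_i^{k+1}\rangle-\tfrac{\theta_i}{2}\|e_i^{k+1}\|^2$, and the consensus-independent parts of $q_i^k$ and $q_i^\star$ cancel, leaving a residual coupling of the form $\tfrac{1}{\alpha}\sum_{i,j}w_{ij}\langle e_j^{s_{ij}^k}-e_i^k,\,e_i^{k+1}\rangle$. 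I will split this via Young's inequality, using the symmetry and row-stochasticity of $W$ (which controls the $w_{ii}$-dependent piece in \eqref{eq:step-cond}), and bound the delayed iterates via the telescope $e_j^{s_{ij}^k}-e_j^k=-\sum_{t=s_{ij}^k}^{k-1}(x_j^{t+1}-x_j^t)$. The second, $\alpha^{-1}(1-w_{ii})$-sized piece of \eqref{eq:step-cond} is exactly what is needed to absorb the non-delayed coupling.

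\textbf{Step 3: Partial asynchrony (part 1).} Introduce the Lyapunov function
\[
V^k=\sum_{i\in\mc{V}}\tfrac{1}{\gamma_i}\|e_i^k\|^2+\sum_{i\in\mc{V}}\sum_{t=k-D}^{k-1}c_{k-t}\|x_i^{t+1}-x_i^t\|^2
\]
with coefficients $c_\tau>0$ tuned so the delay-memory terms absorb the residual coupling produced in Step 2. Assumption \ref{asm:partialasynchrony} guarantees every node moves within any window of $B+1$ iterations and $s_{ij}^k\ge k-D$, so over any window of $B+D+1$ steps every block $\|e_i^\cdot\|^2$ appears with a strict contraction coefficient $1/(1+\gamma_i\theta_i)$ in the strongly convex case, giving \eqref{eq:conv_linear} with $\rho=1/(1+\min_i\gamma_i\theta_i)$. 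Without strong convexity, $V^k$ is monotone non-increasing and the summable successive differences $\sum_k\|x_i^{k+1}-x_i^k\|^2<\infty$ force $\bx^k$ to accumulate at a fixed point of the KKT conditions of \eqref{eq:penal_prob}, hence convergence of $\{\bx^k\}$ to an optimal solution by a standard Opial-style argument.

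\textbf{Step 4: Total asynchrony (part 2).} Here the delay $k-s_{ij}^k$ may be unbounded, so the windowed Lyapunov argument of Step 3 cannot be used directly. Instead, I will appeal to the Bertsekas--Tsitsiklis asynchronous convergence theorem: under strong convexity of each $\phi_i$, the per-node map $T_i$ defined by the right-hand side of \eqref{eq:asyn_PB_alg} (with synchronized arguments) admits $\bx^\star$ as its unique fixed point, and the estimate in Step 1, applied coordinatewise with the optimality condition of Step 2, shows $\|T_i(\bx)-x_i^\star\|\le\kappa_i\max_{j\in\mc{N}_i\cup\{i\}}\|x_j-x_j^\star\|$ with $\kappa_i<1$ produced by the $\theta_i$-strong convexity. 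With Assumption \ref{asm:totalasynchrony}, the nested-sets convergence theorem then yields $\bx^k\to\bx^\star$.

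\textbf{Main obstacle.} The delicate part is Step 2: the asymmetric way in which the delayed $x_j^{s_{ij}^k}$ appears in $q_i^k$ produces cross-terms that are not obviously dominated by the quadratic slack from the step-size condition \eqref{eq:step-cond}. Getting the right constants, so that both the $\beta_i$-term and the $(1-w_{ii})/\alpha$-term in \eqref{eq:step-cond} are tight and the delay-memory weights $c_\tau$ in $V^k$ close the Lyapunov inequality, is the technically demanding computation; everything downstream is then a window-size or max-norm contraction argument.
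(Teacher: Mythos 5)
Your Step 1 is sound and matches the paper's starting point (first\mbox{-}order optimality of \eqref{eq:asyn_PB_alg} plus the sandwich \eqref{eq:mk_key_prop} to convert the model gap into the true $\phi_i$-gap with a $\bigl(\tfrac{\beta_i}{2}-\tfrac{1}{2\gamma_i}\bigr)\|x_i^{k+1}-x_i^k\|^2$ residual). The gap is in Steps 2--3. By summing over $i$, telescoping $e_j^{s_{ij}^k}-e_j^k=-\sum_{t=s_{ij}^k}^{k-1}(x_j^{t+1}-x_j^t)$, and absorbing the result into a delay-memory Lyapunov function with weights $c_\tau$, you are following the standard ``aggregated energy plus memory'' route for delayed methods. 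That route generically forces the step-size to shrink with the delay bound $D$: Young's inequality applied to the telescoped sum of up to $D$ increments produces a factor of order $D$ multiplying $\sum_t\|x_j^{t+1}-x_j^t\|^2$, and closing the recursion then requires the quadratic slack $\bigl(\tfrac{1}{2\gamma_i}-\tfrac{\beta_i}{2}\bigr)\|x_i^{k+1}-x_i^k\|^2$ to dominate $O(D)$ such terms, i.e.\ $\gamma_i=O(1/D)$. This contradicts the delay-independent condition \eqref{eq:step-cond} that the theorem asserts, so the proposed computation cannot be made to close with the stated constants. The paper avoids this entirely by \emph{not} aggregating: it rewrites the update as $x_i^{k+1}=\sum_{j}\hat w_{ij}x_j^{s_{ij}^k}-\gamma_i g_i^{k+1}$ with $\hat w_{ij}\ge 0$, $\sum_j\hat w_{ij}=1$, and uses $\langle x_j^{s_{ij}^k}-x_j^\star,\,x_i^{k+1}-x_i^\star\rangle\le\tfrac12\|x_j^{s_{ij}^k}-x_j^\star\|^2+\tfrac12\|x_i^{k+1}-x_i^\star\|^2$ termwise to obtain the convex-combination bound $\|x_i^{k+1}-x_i^\star\|^2\le\sum_j\hat w_{ij}\|x_j^{s_{ij}^k}-x_j^\star\|^2-2\gamma_i(\cdots)$. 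This is a block-max-norm quasi-contraction whose constants never see the delay magnitude; the delay only dilates the window over which the max is taken (hence the exponent $\lfloor k/(B{+}D{+}1)\rfloor$ in \eqref{eq:conv_linear}), not the step-size.

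Two further points. First, your Step 4 is internally inconsistent with Steps 2--3: the Bertsekas--Tsitsiklis argument for total asynchrony needs exactly the coordinatewise max-norm pseudo-contraction $\|T_i(\bx)-x_i^\star\|\le\kappa_i\max_j\|x_j-x_j^\star\|$, which is what the convex-combination inequality above delivers (and is how the paper proves part 2); if you had it, you would not need the summed Lyapunov function in Step 3 at all. Second, the merely-convex half of part 1 is substantially harder than ``monotone $V^k$ plus summable increments plus Opial.'' With only a max-norm quasi-Fej\'er property, the per-node limits $\lim_k\|x_i^k-x_i^\star\|^2$ need not coincide a priori; the paper spends three of its five steps using network connectivity to propagate improvements and show all these limits equal a common $V^\star$, then identifies a subsequential limit as an optimum of \eqref{eq:penal_prob} via \eqref{eq:phi_conv} and the consensus identity $(I-\mb{W})(\tilde\bx-\bx^\star)=0$, and only then concludes $V^\star=0$ by re-centering $\bx^\star$ at that limit. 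Your sketch omits all of this, and it is not recoverable by a citation to Opial's lemma in the asynchronous, non-Hilbertian (max-norm) setting.
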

\begin{proof}
See Appendix \ref{proof:epsilon_0}.
\end{proof}

\begin{remark}
    In asynchronous decentralized optimization, most works use either diminishing step-sizes \cite{zhang2019asyspa,doan2017convergence,assran2020asynchronous,zhang2019fully,spiridonoff2020robust} or fixed step-sizes \cite{wu2017decentralized,tian2020achieving, kungurtsev2023decentralized} that rely on and decrease with an upper bound $\tau$ of delays. The diminishing step-sizes used in \cite{zhang2019asyspa,spiridonoff2020robust,doan2017convergence,kungurtsev2023decentralized,assran2020asynchronous} decrease rapidly and can often lead to slow practical convergence, especially for deterministic optimization problems. The delay-dependent, fixed step-sizes in \cite{wu2017decentralized, tian2020achieving,zhang2019fully} are difficult to determine because the delay bound is typically unknown in practice, and they can be overly small when the delay bound $\tau$ is large. For example, the maximum observed delay can be as large as $160$ in a $30$-node network \cite{Wu25}, while the step-sizes in \cite{zhang2019fully, wu2017decentralized,tian2020achieving} are of the orders $O((n^2\tau)^{-n\tau})$, $O(\frac{1}{\tau+1})$, and $O(\eta^{n\tau})$ where $\eta=\min_{i\in\mc{V}} w_{ii}\in(0,1)$, respectively. In contrast, our delay-independent, non-diminishing step-size \eqref{eq:step-cond} is easy to determine since it does not include any delay information, and it is less conservative since it does not decrease with the iteration index and does not become overly small when the information delays are large. The less conservative step-sizes often yield faster practical convergence \cite{Wu25}.
\end{remark}

\begin{remark}
    Our algorithms can handle problem \eqref{eq:prob} with non-quadratic and non-smooth objective functions, while most existing works on asynchronous decentralized optimization \cite{zhou2025asynchronous,Zhu2024,zhang2019fully,tian2020achieving,ubl2021totally} cannot. Specifically, \cite{zhou2025asynchronous,Zhu2024,zhang2019fully,tian2020achieving} require $h_i\equiv 0$ and \cite{ubl2021totally} considers quadratic objective functions. Although the method in \cite{wu2017decentralized} can simultaneously address non-quadratic and non-smooth objective functions, it uses a probabilistic model to characterize the node that updates at each iteration, which is easier to analyse but less practical than the partial and total asynchrony models.
\end{remark}

Next, we consider the case where $\beta_i$ is unknown, and use the parameter $\beta_i^k>0$ that satisfies
\begin{align}
    f_i(x_i^{k+1}) &= m_i^k(x_i^{k+1})+\frac{\beta_i^k\|x_i^{k+1}-x_i^k\|^2}{2}.\label{eq:beta_ik}
\end{align}
Clearly,
\begin{equation}\label{eq:bounded_betai}
    \beta_i^k\le \beta_i.
\end{equation}

\begin{theorem}\label{thm:epsilon_0_adapt}
    Suppose that Assumptions \ref{asm:convex}--\ref{asm:opt_exist} hold. If for all $i\in\mc{V}$, there exists $\eta_i\in (0,1)$ such that
    \begin{align}
        &\gamma_i^k \le \frac{\eta_i}{\beta_i^k+\frac{1-w_{ii}}{\alpha}},\label{eq:ada_step-cond}\\
        &\min_{k\in\mc{K}_i} \gamma_i^k>0,\label{eq:ada_step-cond_lowerbound}
    \end{align}
    then all the convergence results in Theorem \ref{thm:epsilon_0} hold, with $\rho$ in \eqref{eq:conv_linear} replaced with $1/(1+\min_{i\in\mc{V}}\frac{\eta_i\theta_i}{\beta_i+\frac{1-w_{ii}}{\alpha}})$.
\end{theorem}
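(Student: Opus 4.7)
The plan is to closely follow the proof of Theorem \ref{thm:epsilon_0} in Appendix \ref{proof:epsilon_0}, adapting each step to accommodate the $k$-dependent parameters $\beta_i^k$ and $\gamma_i^k$ in place of the fixed $\beta_i$ and $\gamma_i$. The key observation is that Assumption \ref{asm:tilde_f}(ii) enters the original proof at only one point: an upper bound of the form $f_i(x_i^{k+1}) \le m_i^k(x_i^{k+1}) + \frac{\beta_i}{2}\|x_i^{k+1}-x_i^k\|^2$ invoked at the update point $x_i^{k+1}$ to derive the one-step descent inequality after node $i$ updates. By the defining relation \eqref{eq:beta_ik}, this inequality holds with equality when $\beta_i$ is replaced by $\beta_i^k$, so the descent inequality remains valid under the substitution $(\beta_i,\gamma_i)\mapsto(\beta_i^k,\gamma_i^k)$.

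With this substitution in place, the adaptive condition \eqref{eq:ada_step-cond} yields $\gamma_i^k(\beta_i^k + (1-w_{ii})/\alpha) \le \eta_i < 1$, which is precisely the contractive requirement that $\gamma_i(\beta_i + (1-w_{ii})/\alpha) < 1$ served in the original proof. The lower-bound condition \eqref{eq:ada_step-cond_lowerbound} additionally ensures that the per-iteration progress is bounded away from zero, which is needed to sum the descent terms and establish summability. With these ingredients, the Fej\'er-type and summability arguments used to prove convergence under both Assumption \ref{asm:partialasynchrony} and Assumption \ref{asm:totalasynchrony} transfer essentially verbatim to the adaptive case.

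For the linear-rate conclusion under strong convexity, the additional ingredient is the uniform bound $\beta_i^k \le \beta_i$ from \eqref{eq:bounded_betai}. The original argument produces a per-node contraction factor of $1/(1+\gamma_i\theta_i)$ on the relevant Lyapunov quantity after node $i$ updates. The adaptive version gives $1/(1+\gamma_i^k\theta_i)$, and using $\gamma_i^k \ge \eta_i/(\beta_i + (1-w_{ii})/\alpha)$, which holds when $\gamma_i^k$ is chosen at its admissible maximum in \eqref{eq:ada_step-cond} (the standard choice in adaptive bundle schemes), yields the stated worst-case contraction $\rho = 1/(1+\min_{i\in\mc{V}}\eta_i\theta_i/(\beta_i + (1-w_{ii})/\alpha))$.

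The main obstacle I anticipate is a bookkeeping exercise: verifying that $\beta_i$ does not appear implicitly elsewhere in the proof of Theorem \ref{thm:epsilon_0} (for example, in bounding displacement terms $\|x_i^{k+1}-x_i^k\|$ or in the Fej\'er-type estimates themselves). Since the inequality \eqref{eq:mk_key_prop} is the unique source of $\beta_i$ in Assumption \ref{asm:tilde_f} and is used only at $x_i = x_i^{k+1}$, this verification should be routine. The remaining work amounts to tracking $k$-dependent step-sizes through the asynchronous telescoping arguments, which introduces no new difficulty beyond slightly heavier notation.
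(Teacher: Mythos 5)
Your overall strategy---rerun the proof of Theorem~\ref{thm:epsilon_0} with $(\beta_i,\gamma_i)$ replaced by $(\beta_i^k,\gamma_i^k)$, observing that \eqref{eq:mk_key_prop} enters only through the upper bound at $x_i^{k+1}$, which \eqref{eq:beta_ik} supplies with equality, and that \eqref{eq:ada_step-cond} plays the role of the old requirement $\gamma_i(\beta_i+(1-w_{ii})/\alpha)<1$---is exactly what the paper does. However, you misidentify what the lower-bound condition \eqref{eq:ada_step-cond_lowerbound} is for, and this is the one genuinely non-routine piece of bookkeeping. The proof of Theorem~\ref{thm:epsilon_0} is not a Fej\'er/summability argument; it is an asynchronous block-maximum-norm contraction argument in which the update is rewritten as $x_i^{k+1}=\sum_{j\in\bar{\mc{N}}_i}\hat w_{ij}x_j^{s_{ij}^k}-\gamma_i g_i^{k+1}$ with convex-combination weights $\hat w_{ij}=w_{ij}\gamma_i/\alpha$ for $j\ne i$. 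Steps 2--3 of that proof (propagation of improvement across the network and identification of the limit) rely crucially on $\hat w_{\min}=\min_{i\in\mc{V},j\in\bar{\mc{N}}_i}\hat w_{ij}>0$, with factors such as $\hat w_{\min}^{\tau+1}$ and $\hat w_{ii}^{2(\tau+1)}$ appearing explicitly. Once $\gamma_i^k$ varies with $k$, the off-diagonal weights become $\hat w_{ij}^k=w_{ij}\gamma_i^k/\alpha$, and \eqref{eq:ada_step-cond_lowerbound} is needed precisely to keep $\min_{i,k,j}\hat w_{ij}^k>0$ (together with $\hat w_{ii}^k\ge 1-\eta_i$, which follows from \eqref{eq:ada_step-cond}); if $\gamma_i^k$ could approach zero, the contraction coefficients would degenerate and Steps 2--3 would fail. ``Bounding per-iteration progress away from zero so the descent terms can be summed'' is not the mechanism, and a proof built on that reading would not go through.

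A second, smaller point: for the linear rate you need a uniform lower bound $\gamma_i^k\ge \eta_i/(\beta_i+(1-w_{ii})/\alpha)$, and you obtain it by additionally assuming that $\gamma_i^k$ is taken at the admissible maximum of \eqref{eq:ada_step-cond}. That assumption is not among the stated hypotheses \eqref{eq:ada_step-cond}--\eqref{eq:ada_step-cond_lowerbound}. To be fair, the paper's own appendix is silent on how the modified $\rho$ is obtained, and the claimed rate does require such a lower bound (which the back-tracking scheme only delivers up to the factor $c_i$), so you are at least explicit about a step the paper glosses over; but as written, your rate derivation rests on an extra hypothesis rather than on the theorem's stated conditions.
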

\begin{proof}
See Appendix \ref{proof:ada_epsilon_0}.
\end{proof}

The step-size conditions \eqref{eq:ada_step-cond} and \eqref{eq:ada_step-cond_lowerbound} can be satisfied through a {\bf back-tracking} scheme: Choose $\eta_i,c_i\in(0,1)$ and $\gamma_i^{\text{init}}>0$, and set $\tilde{\gamma}_i=\gamma_i^{\text{init}}$. At each iteration $k\in\mc{K}_i$, 
    \begin{itemize}
        \item {\bf Step} 1: set $\gamma_i^k=\tilde{\gamma}_i$ and update $x_i^{k+1}$ by \eqref{eq:asyn_PB_alg}.
        \item {\bf Step} 2: compute $\beta_i^k$ according to \eqref{eq:beta_ik}.
        \item {\bf Step} 3: set $\tilde{\gamma}_i = \frac{c_i\eta_i}{\beta_i^k+\frac{1-w_{ii}}{\alpha}}$ and check \eqref{eq:ada_step-cond}. If \eqref{eq:ada_step-cond} holds, go to the next iteration; Otherwise, repeat {\bf Steps} 1-3.
    \end{itemize}
    Note that if \eqref{eq:ada_step-cond} fails to hold, then $\gamma_i^k>\frac{\eta_i}{\beta_i^k+\frac{1-w_{ii}}{\alpha}}$ and setting the new $\gamma_i^k= \frac{c_i\eta_i}{\beta_i^k+\frac{1-w_{ii}}{\alpha}}$ ensures $\gamma_i^k$ to be smaller than $c_i$ times of its old value. Moreover, because $\beta_i^k\le \beta_i$, as long as $\gamma_i^k\le \frac{\eta_i}{\beta_i+\frac{1-w_{ii}}{\alpha}}$, \eqref{eq:ada_step-cond} holds. Therefore, \eqref{eq:ada_step-cond} will be satisfied within a finite number of attempts. Moreover, following this scheme, it holds that for all $i\in\mc{V}$ and $k\in\mc{K}_i$,
    \begin{equation*}
        \gamma_i^k \ge \min\left(\frac{c_i\eta_i}{\beta_i+\frac{1-w_{ii}}{\alpha}}, \gamma_i^{\text{init}}\right),
    \end{equation*}
    which implies \eqref{eq:ada_step-cond_lowerbound}.

\begin{figure*}
    \begin{subfigure}[t]{\textwidth}  
    \hspace{0.2cm}
    \includegraphics[width=1\textwidth, height=1.3cm]{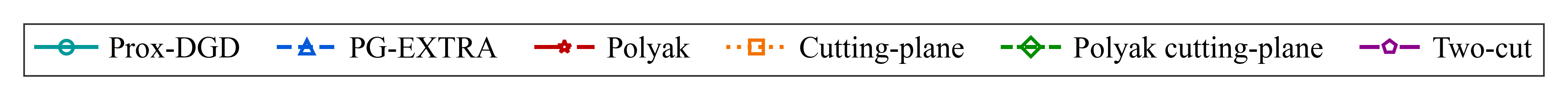}        
    \end{subfigure}
    \begin{minipage}[t]{\textwidth}
    \centering
    \begin{subfigure}[t]{0.33\textwidth}        
    \includegraphics[width=1\textwidth, valign=t]{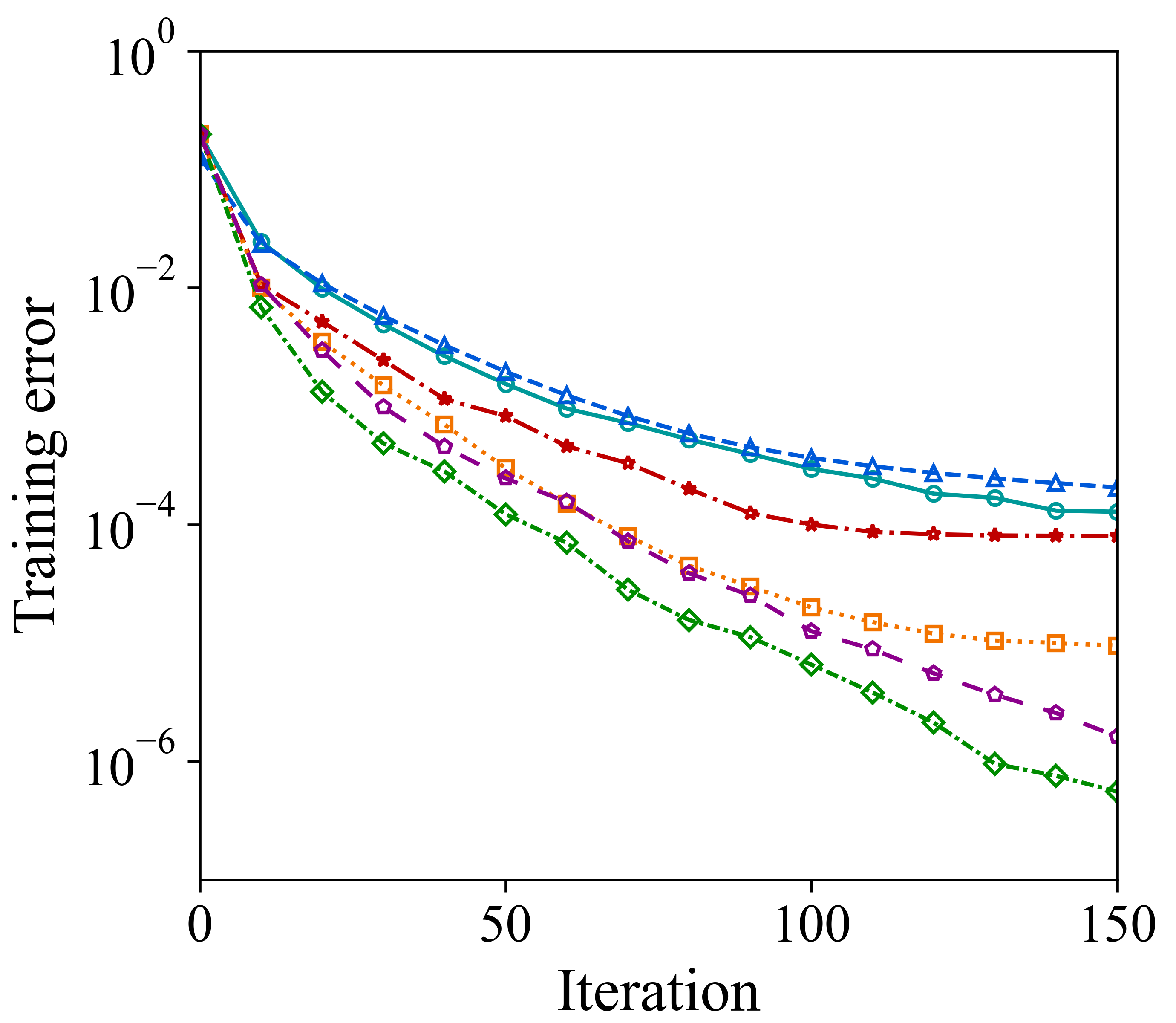}            
    \caption{Deterministic methods}        
    \label{fig:syn_algorithms_det_panel}    
    \end{subfigure}    
    \hspace{-0.3cm}    
    \begin{subfigure}[t]{0.33\textwidth}        
    \includegraphics[width=1\textwidth, valign=t]{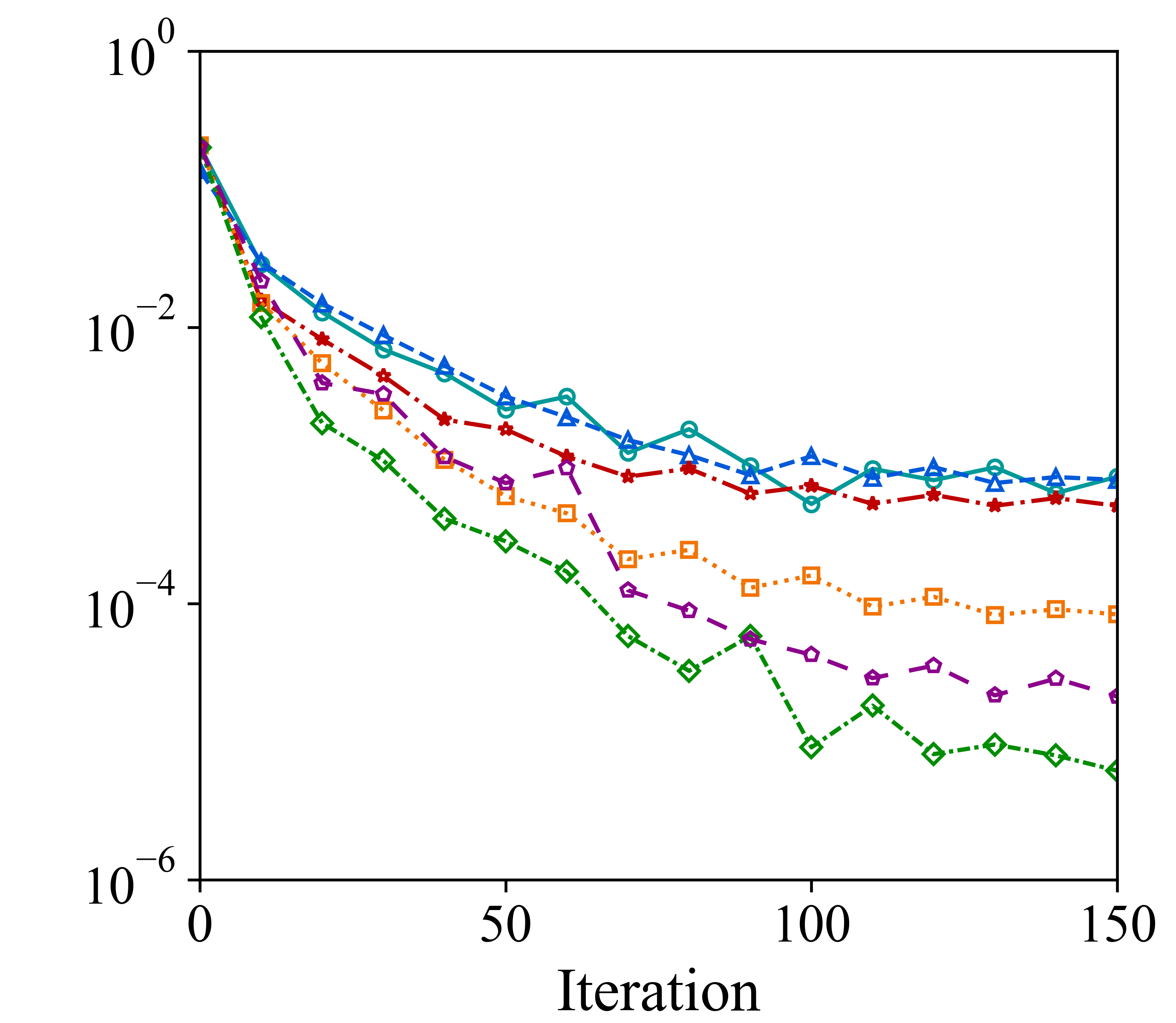}
    \caption{Stochastic methods}        
    \label{fig:syn_algorithms_sto_panel}    
    \end{subfigure}    
    \hspace{-0.3cm}    
    \begin{subfigure}[t]{0.33\textwidth}        
    \includegraphics[width=1\textwidth, valign=t]{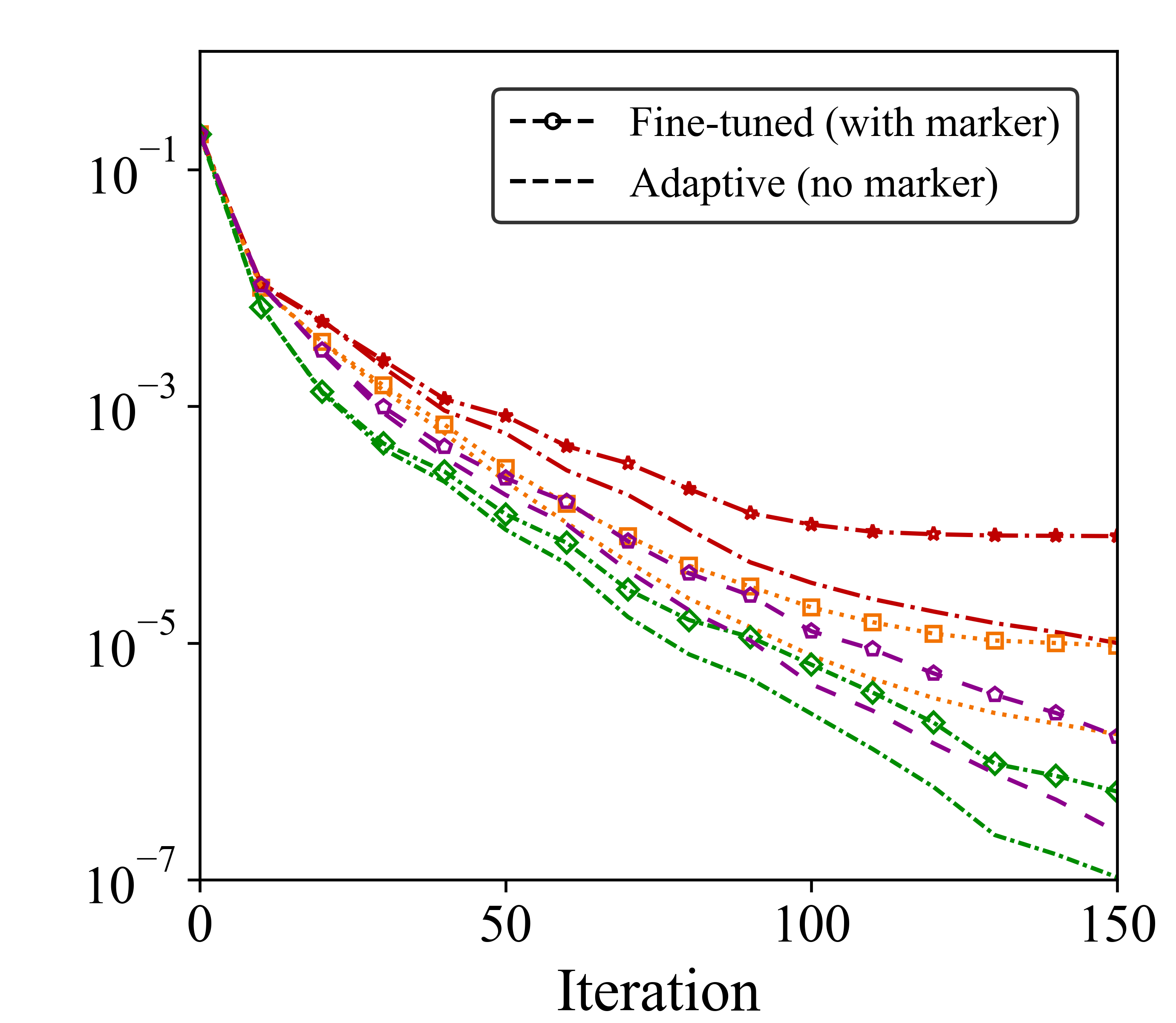}              
    \caption{Deterministic DPBM: fine-tuned \& adaptive}
    \label{fig:syn_step_size_comparison}    
    \end{subfigure}
    \end{minipage}
    \caption{Convergence of synchronous optimization methods.}
    \label{fig:syn_algorithm_comparison}
\end{figure*}

\begin{figure*}
    \begin{subfigure}[t]{\textwidth}  
    \hspace{0.2cm}
    \includegraphics[width=1\textwidth, height=1.3cm]{figures/algorithms_det_panel_legend_1.png}        
    \end{subfigure}
    \begin{minipage}[t]{\textwidth}
    \centering
    \begin{subfigure}[t]{0.33\textwidth}        
    \includegraphics[width=1\textwidth, valign=t]{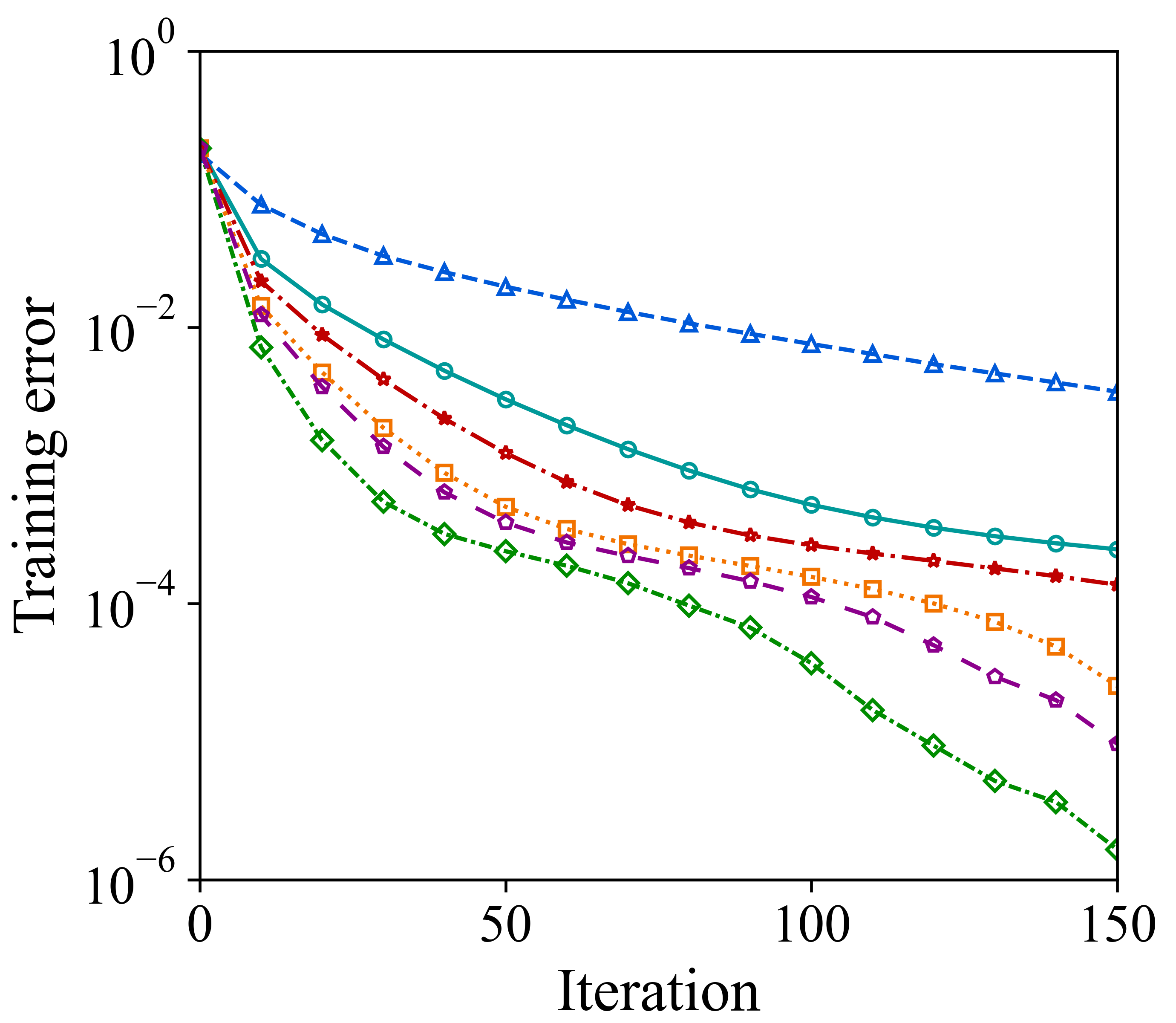}            
    \caption{Deterministic methods}        
    \label{fig:algorithms_det_panel}    
    \end{subfigure}    
    \hspace{-0.3cm}    
    \begin{subfigure}[t]{0.33\textwidth}        
    \includegraphics[width=1\textwidth, valign=t]{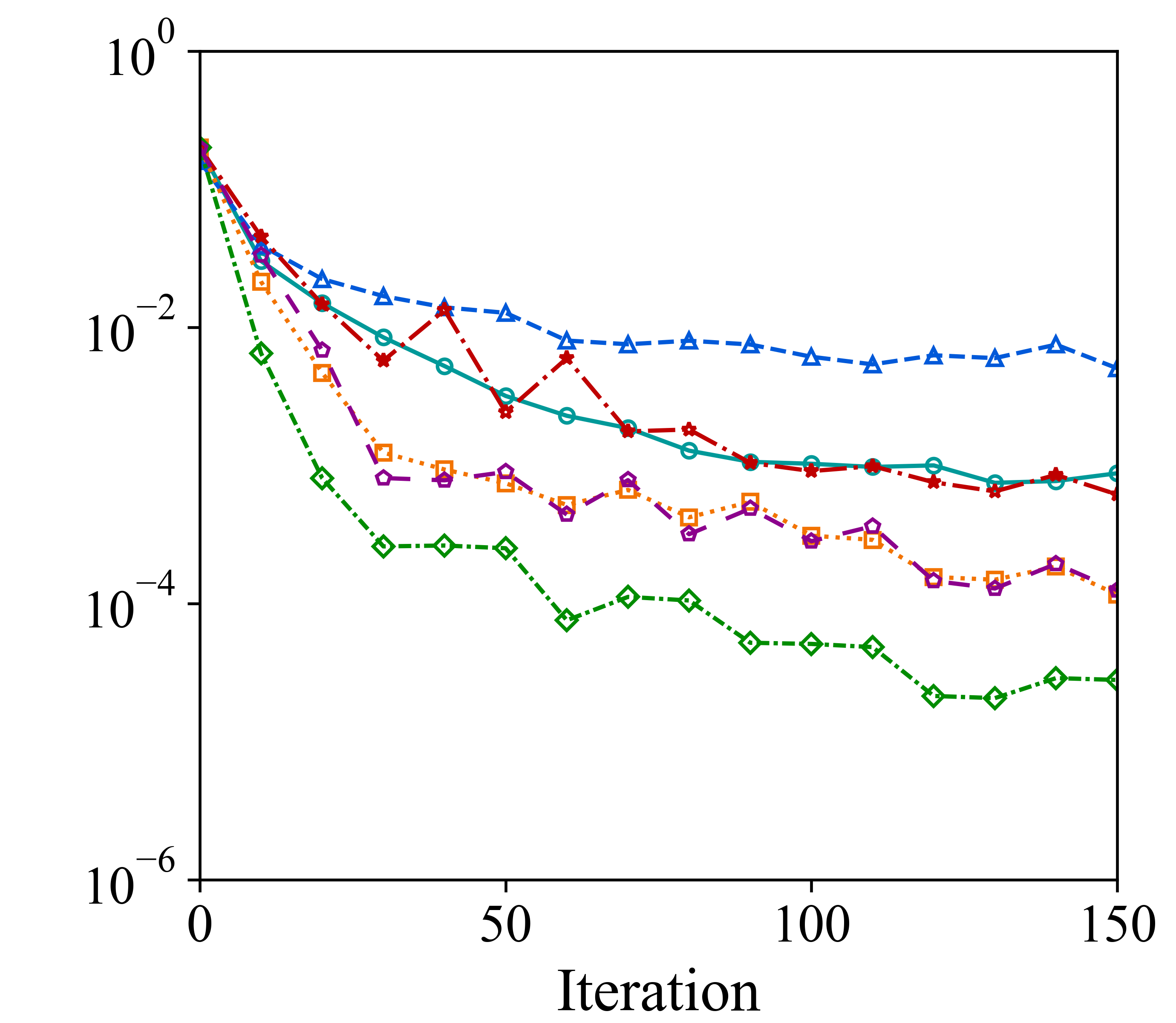}            
    \caption{Stochastic methods}        
    \label{fig:algorithms_sto_panel}    
    \end{subfigure}    
    \hspace{-0.3cm}    
    \begin{subfigure}[t]{0.33\textwidth}        
    \includegraphics[width=1\textwidth, valign=t]{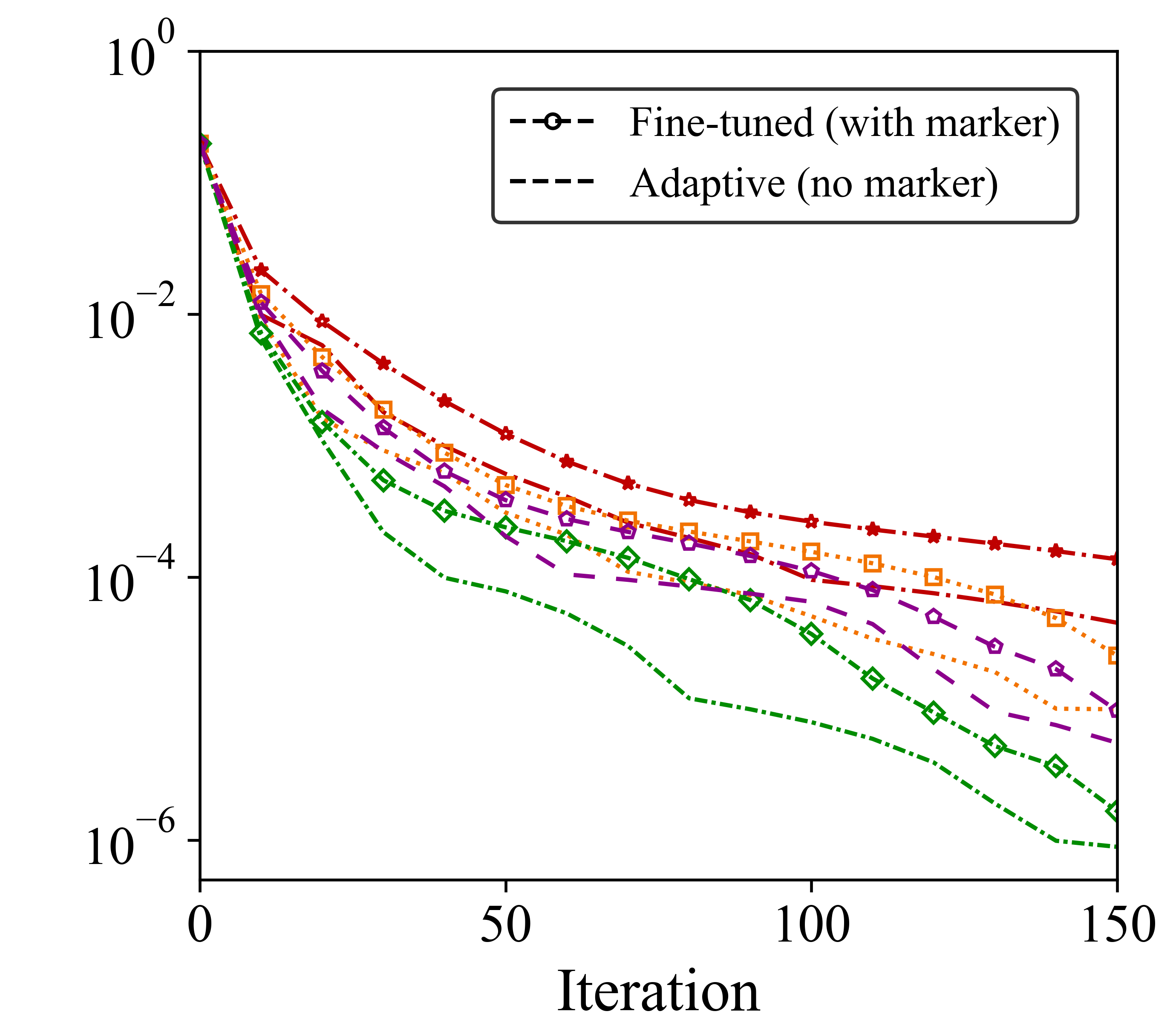}              
    \caption{Deterministic DPBM: fine-tuned \& adaptive}
    \label{fig:step_size_comparison}    
    \end{subfigure}
    \end{minipage}
    \caption{Convergence of asynchronous optimization methods.}
    \label{fig:algorithm_comparison}
\end{figure*}

\subsection{Stochastic Algorithm}

The asynchronous stochastic DPBM converges to a neighborhood of the optimum with delay-independent step-sizes.
\begin{theorem}\label{thm:sto_smooth}
    Suppose that Assumptions \ref{asm:convex}, \ref{asm:graph}, \ref{asm:opt_exist}, and \ref{asm:stoch_tilde_f} hold and each $\phi_i$ is $\theta_i$-strongly convex for some $\theta_i>0$. Also suppose that the step-size condition \eqref{eq:step-cond} holds. Then, for the update \eqref{eq:asyn_PB_alg}, under Assumption \ref{asm:partialasynchrony},
    \begin{equation}\label{eq:sto_conv}
        \mbb{E}[\|x_i^{k}-x_i^\star\|^2|~\mc{F}^k]\le \rho^{\lfloor k/(B+D+1)\rfloor}\max_{j\in\mc{V}}\|x_j^0-x_j^\star\|^2+C,
    \end{equation}
    where $C=\frac{2\rho}{1-\rho}\max_{i\in\mc{V}}(\epsilon_i+\epsilon_i^\star)$. Under Assumption \ref{asm:totalasynchrony}, \begin{equation}\label{eq:lim_sup}
        \underset{k\rightarrow+\infty}{\lim\sup}~\mbb{E}[\|x_i^{k}-x_i^\star\|^2|~\mc{F}^k]\le C.
    \end{equation}
\end{theorem}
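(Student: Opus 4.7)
The plan is to extend the deterministic contraction analysis of Theorem \ref{thm:epsilon_0} to the stochastic setting by conditioning on the filtration $\mc{F}^k$ defined in \eqref{eq:Fk_def} and using Assumption \ref{asm:stoch_tilde_f} to absorb the stochastic error into an additive $O(\epsilon_i+\epsilon_i^\star)$ term. Specifically, I expect the argument to produce a per-update inequality of the form
\begin{equation*}
\mbb{E}[\|x_i^{k+1}-x_i^\star\|^2\mid\mc{F}^k] \le \tfrac{1}{1+\gamma_i\theta_i}\max_{j\in\mc{N}_i\cup\{i\},\, t\in\{s_{ij}^k,k\}}\|x_j^t-x_j^\star\|^2 + 2\gamma_i(\epsilon_i+\epsilon_i^\star),
\end{equation*}
after which the asynchronous bookkeeping exactly parallels that of Theorem \ref{thm:epsilon_0}.

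First, I would fix $i\in\mc{V}$ and $k\in\mc{K}_i$, and write the optimality condition for the subproblem \eqref{eq:asyn_PB_alg}. Using that the subproblem objective is $(\tfrac{1}{\gamma_i}+\theta_i)$-strongly convex (since $m_i^k+h_i$ is convex, $\phi_i$ is $\theta_i$-strongly convex, and the proximal term contributes $\tfrac{1}{\gamma_i}$), a standard three-point inequality would yield
\begin{equation*}
\bigl(\tfrac{1}{\gamma_i}+\theta_i\bigr)\|x_i^{k+1}-x_i^\star\|^2 \le \tfrac{1}{\gamma_i}\|x_i^k-x_i^\star\|^2 - \tfrac{1}{\gamma_i}\|x_i^{k+1}-x_i^k\|^2 + 2\bigl[(m_i^k{+}h_i)(x_i^\star) - (m_i^k{+}h_i)(x_i^{k+1})\bigr] + \tfrac{2}{\alpha}\bigl\langle \textstyle\sum_{j\in\mc{N}_i}w_{ij}(x_j^{s_{ij}^k}-x_i^k),\, x_i^{k+1}-x_i^\star\bigr\rangle.
\end{equation*}
Next, I would take conditional expectation given $\mc{F}^k$ and invoke Assumption \ref{asm:stoch_tilde_f}: the upper bound \eqref{eq:sto_smooth_tilde_sigma} converts $-\mbb{E}[m_i^k(x_i^{k+1})]$ into $-f_i(x_i^{k+1})+\tfrac{\beta_i}{2}\|x_i^{k+1}-x_i^k\|^2+\epsilon_i$, and \eqref{eq:sto_smooth_tilde_sigma_star} converts $\mbb{E}[m_i^k(x_i^\star)]$ into $f_i(x_i^\star)+\epsilon_i^\star$. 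The resulting $f_i+h_i=\phi_i$ differences can then be bounded via the $\theta_i$-strong convexity of $\phi_i$, absorbing $\theta_i\|x_i^{k+1}-x_i^\star\|^2$ on the left-hand side.

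The third step is to handle the delayed consensus cross term using a Cauchy--Schwarz / Young's inequality split tuned so that the step-size condition \eqref{eq:step-cond}, namely $\gamma_i(\beta_i+\tfrac{1-w_{ii}}{\alpha})<1$, allows the quadratic $\|x_i^{k+1}-x_i^k\|^2$ residuals to be absorbed and each $\|x_j^{s_{ij}^k}-x_j^\star\|^2$ to enter with total weight at most one (exactly as in the deterministic proof). Rearranging yields the per-update inequality displayed above. At this point the argument becomes purely asynchronous: defining $V^k=\max_{i\in\mc{V},\,t\in\{(k-D)\vee 0,\ldots,k\}}\mbb{E}[\|x_i^t-x_i^\star\|^2\mid\mc{F}^k]$ and iterating over a window of $B+D+1$ steps (so every node updates at least once and every delayed value used lies in the window) gives $V^{k+B+D+1}\le \rho V^k+2\rho\max_i(\epsilon_i+\epsilon_i^\star)/(1-\rho)\cdot(1-\rho)$, and telescoping the geometric series produces \eqref{eq:sto_conv}. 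For the total asynchrony case, I would replace the fixed-window iteration by the eventual-purging argument of Assumption \ref{asm:totalasynchrony}(ii): because $s_{ij}^k\to\infty$, for any $\varepsilon>0$ all stale terms eventually satisfy $\|x_j^{s_{ij}^k}-x_j^\star\|^2\le \limsup_{k}\mbb{E}\|\cdot\|^2+\varepsilon$, whence taking $\limsup$ on both sides of the per-update inequality and using strong convexity gives \eqref{eq:lim_sup}.

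The main obstacle I anticipate is ensuring that the max-norm asynchronous argument composes correctly with the conditional expectations: the filtration $\mc{F}^k$ includes all past random batches, so $x_i^k$ is $\mc{F}^k$-measurable, but the inequalities \eqref{eq:sto_smooth_tilde_sigma}--\eqref{eq:sto_smooth_tilde_sigma_star} are expectations over $\mc{D}_i^k$ given $\mc{F}^k$, and one must verify that the additive errors $\epsilon_i,\epsilon_i^\star$ do not accumulate across the $B+D+1$-step window beyond the geometric-sum constant $C=\tfrac{2\rho}{1-\rho}\max_i(\epsilon_i+\epsilon_i^\star)$. A secondary technical point is the total-asynchrony case, where the absence of a uniform delay bound forbids direct unrolling; here the $\limsup$ argument must be combined with strong convexity to rule out a fixed-size persistent oscillation larger than $C$.
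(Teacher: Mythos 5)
Your proposal is correct and follows essentially the same route as the paper: derive the per-update inequality from the subproblem's optimality condition, split the model error into $m_i^k(x_i^\star)-f_i(x_i^\star)$ and $f_i(x_i^{k+1})-m_i^k(x_i^{k+1})$ (bounded in conditional expectation by $\epsilon_i^\star$ and $\epsilon_i$ after absorbing $\tfrac{\beta_i}{2}\|x_i^{k+1}-x_i^k\|^2$ via the step-size condition), obtain the contraction factor $\rho$ from the $\theta_i$-strong convexity of $\phi_i$, and then run the $(B{+}D{+}1)$-window induction (resp.\ the eventual-purging argument) for partial (resp.\ total) asynchrony. The only cosmetic discrepancy is your claim that the subproblem objective is $(\tfrac{1}{\gamma_i}+\theta_i)$-strongly convex — it is only $\tfrac{1}{\gamma_i}$-strongly convex since $m_i^k$ is piecewise linear — but your narrative then applies strong convexity through $\phi_i$ after the model-to-function conversion, which is the correct order and matches the paper.
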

\begin{proof}
See Appendix \ref{proof:sto_smooth}.
\end{proof}

\begin{remark}
    A closely related work \cite{zhou2025asynchronous} studies the convergence of the asynchronous stochastic decentralized gradient descent method, which is a special case of our bundle method with $m_i^k(x_i) = f_i(x_i^k)+\langle\nabla f_i(x_i^k), x_i-x_i^k\rangle$ and $h_i\equiv 0$. Under partial asynchrony and a step-size condition including $D$, the work \cite{zhou2025asynchronous} proves the convergence. Compared to \cite{zhou2025asynchronous}, our method 1) is more general, 2) can converge under delay-independent step-size conditions, 3) can handle non-smooth functions, and 4) is analysed under both the partial and the total asynchrony model. Admittedly, the work \cite{zhou2025asynchronous} allows for non-convex objective functions while our work does not.
\end{remark}

\section{Numerical experiments}\label{sec:numerical_exp}

We evaluate the empirical performance of the proposed methods on decentralized classification tasks. Each node $i$ possesses a set $\mathcal{D}_i$ of samples and all nodes collaborate to train a common classifier by logistic regression, which corresponds to problem \eqref{eq:prob} with
\begin{equation}
\begin{aligned}
    f_i(x) &= \frac{1}{|\mathcal{D}_i|} \sum_{(a_j,b_j)\in\mathcal{D}_i} \log\left(1 + e^{-b_j (a_j^T x)}\right),\\
    h_i(x) &= \lambda_1\|x\|_1.
\end{aligned}
\label{eq:experiment_setup}
\end{equation}
In problem \eqref{eq:experiment_setup}, \(a_j\) is $j$th feature vector and \(b_j\) is the corresponding label, and \(\lambda_1 = 10^{-3}\) is the regularization parameter. We set \(n = 20\) and use the ring network. The experiments are conducted on a multi-core computer, each node corresponds to a core, and all the delays are generated from real interactions between the cores rather than being manually set. We consider the Covertype dataset~\cite{Dua:2019} that contains \(581012\) samples with feature dimension $d=54$, and evenly distribute the samples to all nodes.

We compare DPBM with Prox-DGD \cite{zeng2018nonconvex,Wu25} and PG-EXTRA \cite{shi2015proximal,wu2017decentralized} in the synchronous and asynchronous, and deterministic and stochastic settings.  For stochastic methods, we set the batch size as \(100\). For DPBM, we set \(\alpha = 20\) and adopt four types of surrogate models for \(m_i^k\): the Polyak model, the cutting-plane model, the Polyak cutting-plane model, and the two-cut model, where $\mc{S}_i^k=[k-M+1,\ldots, k]$ in both the cutting-plane model and the Polyak cutting-plane model and $M=10$ by default. We run all methods for $150$ iterations and record the training error \( f(\bar{x}(t)) - f^* \) at the average iterate \(\bar{x}(t) = \frac{1}{n}\sum_{i=1}^n x_i(t)\), where \(x_i(t)\) is the value of \(x_i\) at the $t$th iteration and \(f^\star\) is the optimal value of (\ref{eq:prob}). Figs. \ref{fig:syn_algorithm_comparison} and \ref{fig:algorithm_comparison} compare the convergence of the synchronous and the asynchronous methods, respectively, and each of them considers the three settings: (a) deterministic methods with fine-tuned parameters; (b) stochastic methods with fine-tuned parameters; (c) deterministic DPBM with fine-tuned and the proposed adaptive step-size (below Theorem \ref{thm:epsilon_0_adapt}). Fig. \ref{fig:robust} demonstrates the robustness of DPBM with the Polyak cutting-plane model in the step-size $\gamma$ (we set $\gamma_i^k=\gamma$ for all $i\in\mc{V}$ and $k\in \N_0$) by showing the final training error after $150$ iterations.

From Figs. \ref{fig:syn_algorithm_comparison}--\ref{fig:robust}, we make the following observations. First, compared to Prox-DGD and PG-EXTRA, DPBM with the Polyak step-size yields comparable performance in all settings, while the remaining three models could achieve much faster convergence. This highlights the effectiveness of incorporating historical information in accelerating convergence (Prox-DGD corresponds to DPBM using current information only). Second, DPBM with the Polyak cutting-plane model consistently outperforms the Polyak model and the cutting-plane model, which may be due to the higher approximation accuracy of the Polyak cutting-plane model. Third, the adaptive step-size consistently leads to faster convergence than the fine-tuned step-size, while the latter requires no step-size tuning, which demonstrates the strong practical effectiveness of the proposed adaptive step-size. Fourth, increasing the number of cuts in DPBM—or equivalently, incorporating more historical information—significantly enhances its robustness in the step-size, e.g., in the synchronous and deterministic setting, DPBM with $M=1$ starts to diverge when $\gamma\ge 8$, while it with $M=10$ allows for $\gamma=20$. This strong robustness reduces the cost of parameter tuning.

\begin{figure*}
    \begin{subfigure}[t]{\textwidth}  
    \hspace{0.2cm}
    \centering
    \includegraphics[width=0.8\textwidth, height=1cm]{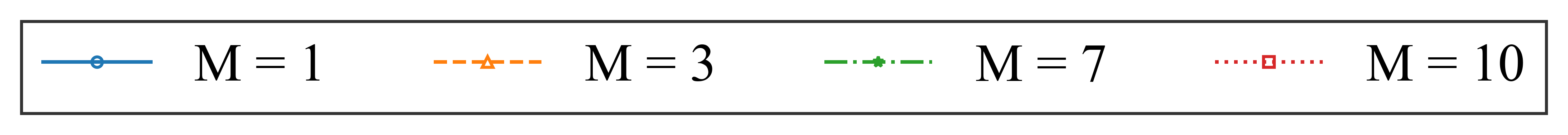}        
    \end{subfigure}
    \begin{minipage}[t]{\textwidth}
    \centering
    \begin{subfigure}[t]{0.33\textwidth}        
    \includegraphics[width=1\textwidth, valign=t]{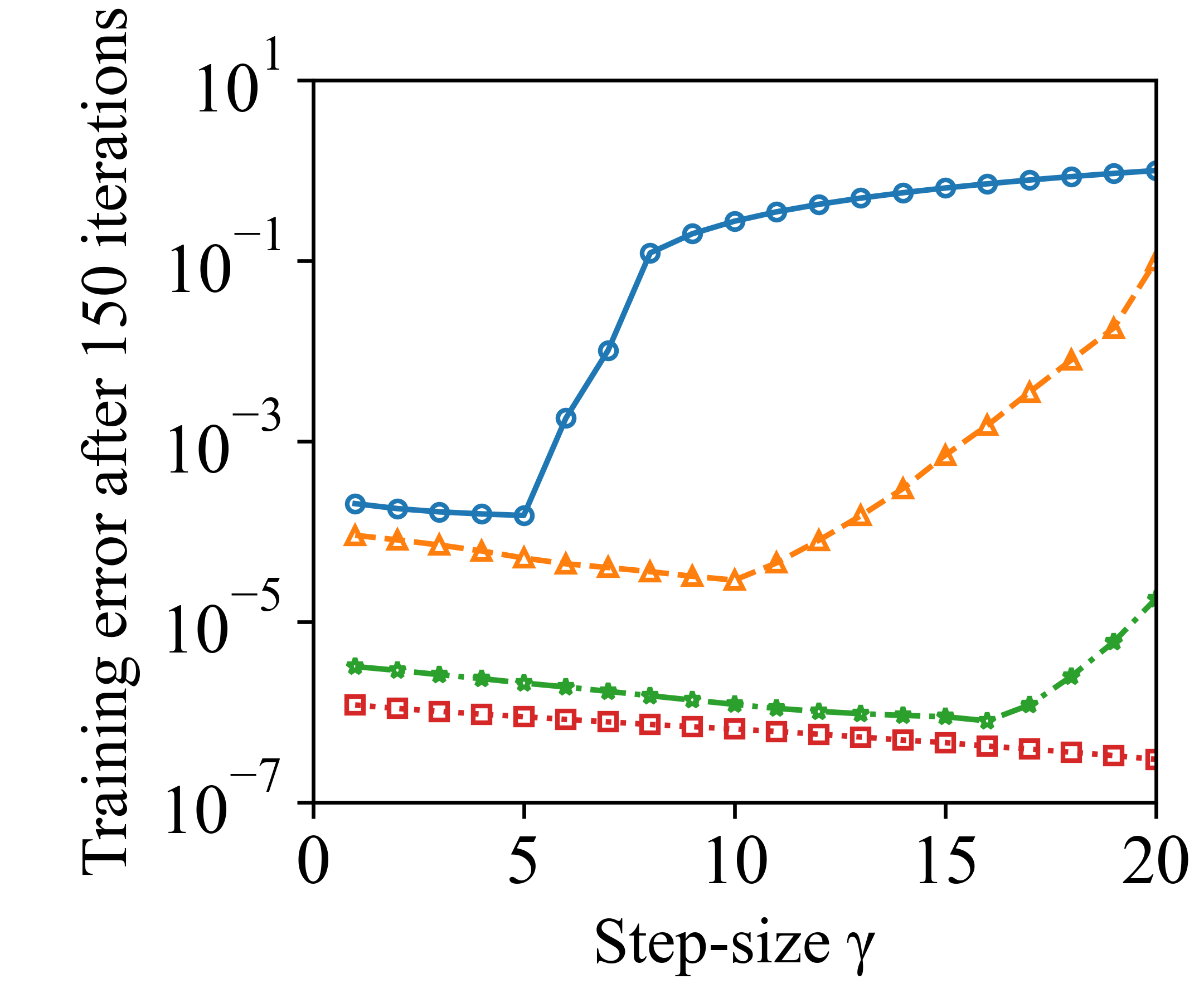}            
    \caption{Synchronous}        
    \label{fig:syn_det_gamma_bundle}    
    \end{subfigure}    
    \hspace{-0.3cm}    
    \begin{subfigure}[t]{0.33\textwidth}        
    \includegraphics[width=1\textwidth, valign=t]{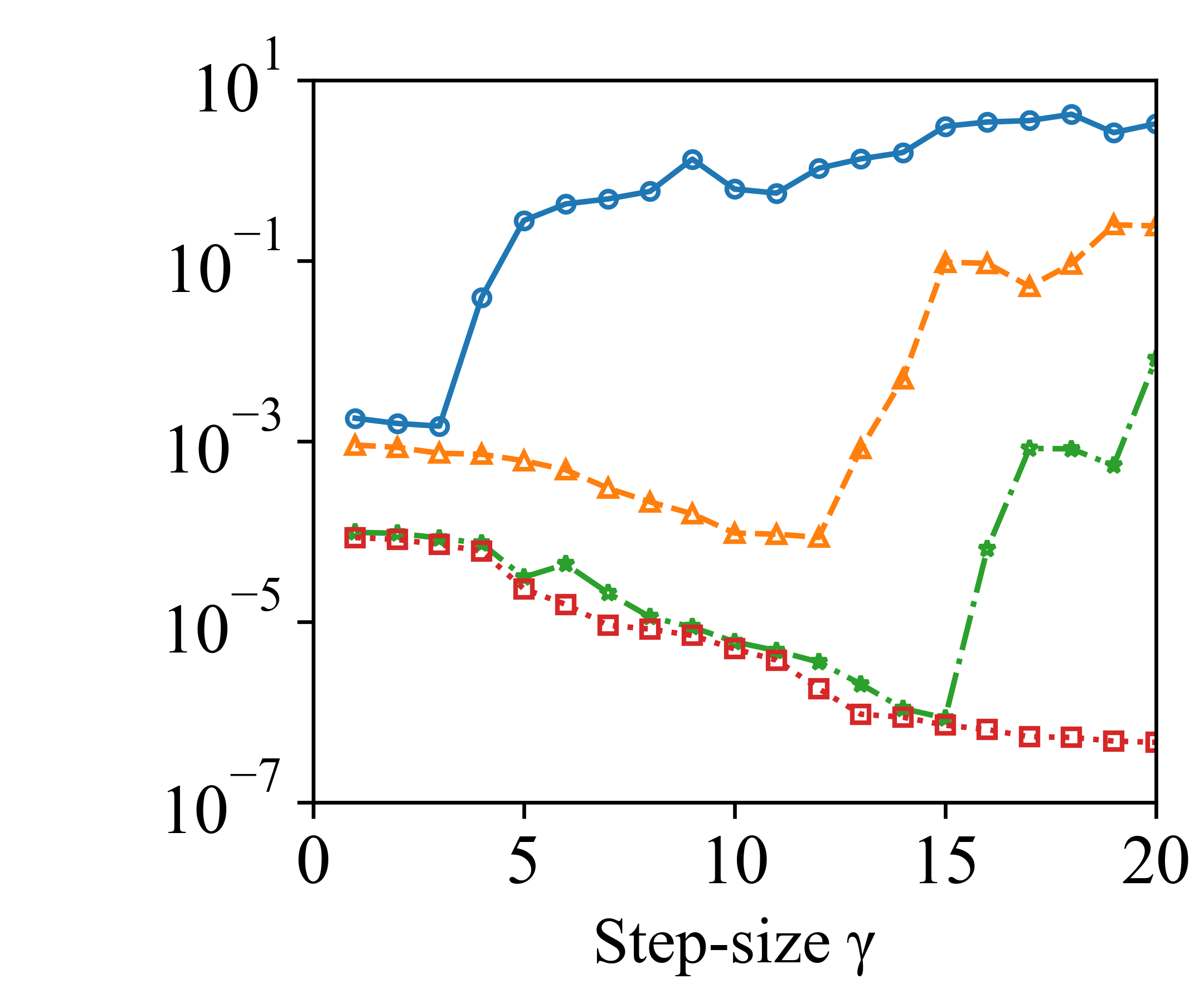}            
    \caption{Asynchronous}        
    \label{fig:asy_det_gamma_bundle}    
    \end{subfigure}    
    \hspace{-0.3cm}    
    \begin{subfigure}[t]{0.33\textwidth}        
    \includegraphics[width=1\textwidth, valign=t]{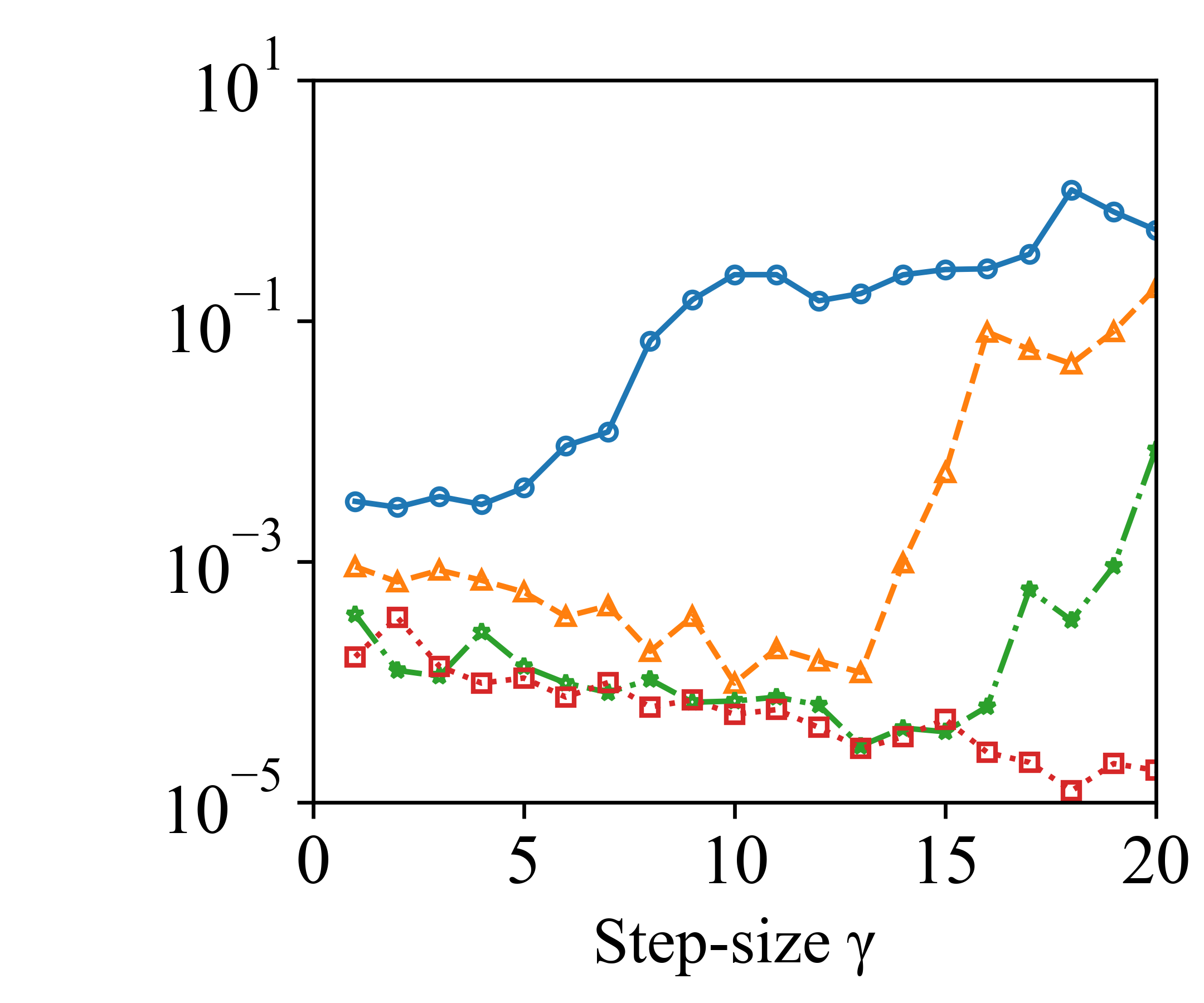}              
    \caption{Asynchronous stochastic}
    \label{fig:sto_gamma_bundle}    
    \end{subfigure}
    \end{minipage}
    \caption{Robustness of DPBM (with different cut number $M$) in the step-size $\gamma$.}
    \label{fig:robust}
\end{figure*}

\section{Conclusion}\label{sec:conclusion}

This paper leverages historical information to accelerate decentralized optimization methods. In particular, we apply the proximal bundle framework to the proximal decentralized gradient method (Prox-DGD), resulting in the Decentralized Proximal Bundle Method (DPBM). We further extend DPBM to asynchronous and stochastic settings to broaden its applicability. We analysed the convergence of the proposed methods under mild conditions. In the asynchronous setting, our methods are guaranteed to converge with delay-independent fixed step-sizes, which is generally less conservative and easier to determine compared to the delay-dependent step-size in most existing asynchronous optimization methods. The numerical results show that compared to the counterparts using only information at the current iterate, our methods not only converge faster, but also demonstrate strong robustness in the step-size. Future work will focus on accelerating decentralized exactly convergent algorithms with historical information.

\appendix
\subsection{Proof of Lemma \ref{lemma:det_key_prop}}\label{proof:lemma_det_key_prop}
To prove that Assumption \ref{asm:tilde_f} holds, it suffices to show the convexity of $m_i^k$,
\begin{align}
    & m_i^k(x_i)\le f_i(x_i),\label{eq:minorant}\\
    & f_i(x_i)\le m_i^k(x_i)+\frac{\beta_i}{2}\|x_i-x_i^k\|^2.\label{eq:det_mk_lower}
\end{align}
Since $m_i^k$ in \eqref{eq:pol}--\eqref{eq:two_cut} takes the maximum of affine functions, it is convex \cite[Section 3.2.3]{boyd2004convex}.

Next, we show \eqref{eq:minorant}. By the convexity of $f_i$ and $c_i^f\le \inf_{x_i} f_i(x_i)$, it is straightforward to see \eqref{eq:minorant} for $m_i^k$ in \eqref{eq:pol}--\eqref{eq:pol_cp}. For $m_i^k$ in \eqref{eq:two_cut}, we show \eqref{eq:minorant} by induction. First, by the convexity of $f_i$, we have \[m_i^0(x_i)=f_i(x_i^0)+\langle\nabla f_i(x_i^0), x_i-x_i^0\rangle\le f_i(x_i),\] i.e., \eqref{eq:minorant} holds at $k=0$. Fix $k\in \N_0$ and suppose that \eqref{eq:minorant} holds for all $t\le k$. If $k\notin \mc{K}_i$, then $m_i^{k+1}=m_i^k$ and, therefore, \eqref{eq:minorant} holds at $k+1$. If $k\in\mc{K}_i$, then by \eqref{eq:two_cut} and the convexity of $m_i^k$ and $f_i$, it holds that \[m_i^{k+1}(x_i)\le \max\{m_i^k(x_i), f_i(x_i)\}\le f_i(x_i),~\forall x_i\in\R^d,\]
i.e., \eqref{eq:minorant} holds at $k+1$. Concluding all the above, \eqref{eq:minorant} holds for all $k\in\N_0$.

Finally, we prove \eqref{eq:det_mk_lower}. For $m_i^k$ in \eqref{eq:pol}--\eqref{eq:two_cut},
\begin{equation}\label{eq:tilde_fik_largerthan_fik}
    m_i^k(x_i)\ge f_i(x_i^k) + \langle \nabla f_i(x_i^k), x_i-x_i^k\rangle.
\end{equation}
Since $f_i$ is $\beta_i$-smooth, we have
\[\begin{split}
    f_i(x_i^k) + \langle \nabla f_i(x_i^k), x_i-x_i^k\rangle + \frac{\beta_i}{2}\|x_i-x_i^k\|^2 \ge f_i(x_i).
\end{split}\]
Substituting the above equation into \eqref{eq:tilde_fik_largerthan_fik} yields \eqref{eq:det_mk_lower}.

\subsection{Proof of Lemma \ref{lemma:assum_sto}}\label{proof:assum_sto}
To prove that Assumption \ref{asm:stoch_tilde_f} holds, we need to show the convexity of $m_i^k$ and \eqref{eq:sto_smooth_tilde_sigma}--\eqref{eq:sto_smooth_tilde_sigma_star}. The convexity of $m_i^k$ is straightforward to see since the stochastic variants of the four models \eqref{eq:pol}--\eqref{eq:two_cut} are still piece-wise linear functions.

To show \eqref{eq:sto_smooth_tilde_sigma}, note that all the four models satisfy
\begin{equation*}
\begin{split}
    m_i^k(x_i) &\ge F_i(x_i^k;\mc{D}_i^k) + \langle \nabla F_i(x_i^k;\mc{D}_i^k), x_i-x_i^k\rangle\\
    &= f_i(x_i^k)+\langle \nabla f_i(x_i^k), x_i-x_i^k\rangle+\frac{L_i}{2}\|x_i-x_i^k\|^2\\
    &+F_i(x_i^k;\mc{D}_i^k)-f_i(x_i^k)-\frac{L_i}{2}\|x_i-x_i^k\|^2\\
    &+ \langle \nabla F_i(x_i^k;\mc{D}_i^k)-\nabla f_i(x_i^k), x_i-x_i^k\rangle,
\end{split}
\end{equation*}
in which
\begin{equation*}
    \begin{split}
        &\langle \nabla F_i(x_i^k;\mc{D}_i^k)-\nabla f_i(x_i^k), x_i-\nabla F_i(x_i^k;\mc{D}_i^k)\rangle\\
        \ge& -\frac{1}{2L_i}\|\nabla F_i(x_i^k;\mc{D}_i^k)-\nabla f_i(x_i^k)\|^2-\frac{L_i}{2}\|x_i-x_i^k\|^2.
    \end{split}
\end{equation*}
Moreover, since $F_i(x_i;\xi)$ is $L_i$-smooth for all $\xi\in\mc{D}_i$, so is $f_i$, which implies
\begin{equation*}
    f_i(x_i^k)+\langle \nabla f_i(x_i^k), x_i-x_i^k\rangle+\frac{L_i}{2}\|x_i-x_i^k\|^2\ge f_i(x_i).
\end{equation*}
Therefore,
\begin{equation*}
\begin{split}
    & m_i^k(x_i)+L_i\|x_i-x_i^k\|^2 - f_i(x_i)\\
    \ge& -\frac{1}{2L_i}\|\nabla F_i(x_i^k;\mc{D}_i^k)-\nabla f_i(x_i^k)\|^2+F_i(x_i^k;\mc{D}_i^k)-f_i(x_i^k).
\end{split}
\end{equation*}
Taking expectation on both sides over $\mc{D}_i^k$ and using \eqref{eq:sigma} yields \eqref{eq:sto_smooth_tilde_sigma} with $\beta_i=2L_i$ and $\epsilon_i = \frac{\sigma^2}{2L_i}$.

Finally, we prove \eqref{eq:sto_smooth_tilde_sigma_star}. For simplicity, define
\[\nu_i^t=F_i(x_i^t;\mc{D}_i^t) + \langle \nabla F_i(x_i^t;\mc{D}_i^t), x_i^\star-x_i^t\rangle, \forall t\in \N_0,i\in\mc{V}.\]
By the convexity of $F_i$, we have
\begin{equation}\label{eq:nut_bound}
    \nu_i^t \le F_i(x_i^\star; \mc{D}_i^t).
\end{equation}
For the stochastic variant of \eqref{eq:pol_cp},
\begin{equation}
\begin{split}
    & m_i^k(x_i^\star)-f_i(x_i^\star)\\
    =&\max\{\nu_i^t-f_i(x^\star),~t\in \mc{S}_i^k,c_i^f-f_i(x^\star)\}\\
    \le & \max\{F_i(x_i^\star; \mc{D}_i^t)-f_i(x^\star),~t\in \mc{S}_i^k\}\\
    \le& \sqrt{\sum_{t\in\mc{S}_i^k} (F_i(x_i^\star;\mc{D}_i^t)-f_i(x_i^\star))^2},
    \end{split}
\end{equation}
where the first inequality uses \eqref{eq:nut_bound} and $c_i^f\le f_i(x^\star)$. Taking expectation on both sides over $\mc{F}^k$ and using \eqref{eq:sigma_f} gives
\begin{equation}\label{eq:pol+CP}
\begin{split}
    &\mbb{E}[m_i^k(x_i^\star)|~\mc{F}^k]-f_i(x^\star)\\
    \le& \mbb{E}[\sqrt{\sum_{t\in\mc{S}_i^k} (F_i(x_i^\star;\mc{D}_i^t)-f_i(x_i^\star))^2}|~\mc{F}^k]\\
    \le& \sqrt{\sum_{t\in\mc{S}_i^k} \mbb{E}[(F_i(x_i^\star;\mc{D}_i^t)-f_i(x_i^\star))^2|~\mc{F}^k]}\\
    \le& \sqrt{|\mc{S}_i^k|}\sigma^\star.
\end{split}
\end{equation}
Since the stochastic varaint of \eqref{eq:pol} is a special case of that of \eqref{eq:pol_cp} with $\mc{S}_i^k=\{k\}$ and \eqref{eq:pol+CP} holds for the later, it follows that \eqref{eq:pol+CP} also holds for the stochastic variant of \eqref{eq:pol}. For the stochastic variant of \eqref{eq:cp}, following the derivation of \eqref{eq:pol+CP}, it is straightforward to see \eqref{eq:pol+CP} holds. Concluding all the above, \eqref{eq:sto_smooth_tilde_sigma_star} with $\epsilon_i^\star=\sqrt{\max_{k\in\mc{K}_i}|\mc{S}_i^k|}\sigma^\star$ holds for the stochastic variants of \eqref{eq:pol}--\eqref{eq:pol_cp}. Also note that for the stochastic variant of \eqref{eq:pol}, $|\mc{S}_i^k|=1$ for all $k\in\mc{K}_i$, which $\epsilon_i^\star=\sigma^\star$.

For the stochastic variant of \eqref{eq:two_cut}, the key inequality in the proof of \eqref{eq:sto_smooth_tilde_sigma_star} is
\begin{equation}\label{eq:mik-1}
    \begin{split}
        m_i^k(x_i^\star) \le f_i(x_i^\star)+\max_{\xi\in\mc{D}_i} (F_i(x_i^\star;\xi)-f_i(x_i^\star)),
    \end{split}
\end{equation}
which will be derived by induction. Clearly, \eqref{eq:mik-1} holds at $k=0$. Suppose that \eqref{eq:mik-1} holds at all $k\le K$. If $K\notin\mc{K}_i$, then we have $m_i^{K+1}=m_i^K$ and, therefore, \eqref{eq:mik-1} holds at $k=K+1$. Otherwise,
\[
\begin{split}
    &m_i^{K+1}(x_i^\star)-f_i(x_i^\star)\\
    \le& \max(m_i^{K}(x_i^\star)-f_i(x_i^\star), F_i(x_i^\star;\mc{D}_i^K)-f_i(x_i^\star))\\
    \le& \max_{\xi\in\mc{D}_i} (F_i(x_i^\star;\xi))-f_i(x_i^\star)),
\end{split}\]
where the first inequality uses the convexity of $F_i(\cdot;\mc{D}_i^K)$. Concluding all the above, \eqref{eq:mik-1} holds for all $i\in\mc{V}$ and $k\in\N_0$, which implies \eqref{eq:sto_smooth_tilde_sigma_star} with $\epsilon_i^\star = \max_{\xi\in\mc{D}_i} F_i(x_i^\star;\xi)-f_i(x_i^\star).$

\subsection{Proof of Theorem \ref{thm:epsilon_0}}\label{proof:epsilon_0}

Define $\bar{\mc{N}}_i=\mc{N}_i\cup\{i\}$ for all $i\in\mc{V}$ and
\begin{equation}\label{eq:hat_wij_def}
    \hat{w}_{ij} = \begin{cases}
        w_{ij}\gamma_i/\alpha, & j\ne i,\\
        1-(1-w_{ii})\gamma_i/\alpha, & \text{otherwise}.
    \end{cases}, \forall i, j\in\mc{V}.
\end{equation}
Note that $\hat{w}_{ij}>0$ if and only if $j\in\bar{\mc{N}}_i$ and $\sum_{j\in\mc{V}} \hat{w}_{ij} = 1$.

\begin{lemma}\label{lemma:smooth_deter}
    Suppose that all the conditions in Theorem \ref{thm:epsilon_0} hold. Under the step-size condition \eqref{eq:step-cond}, it holds that for some $g_i^\star\in\partial \phi_i(x_i^\star)$ and for all $i\in\mc{V}$ and $k\in\mc{K}_i$,
    \begin{equation}\label{eq:weak_contract}
    \begin{split}
        &\|x_i^{k+1}-x_i^\star\|^2\le \sum_{j\in \bar{\mc{N}}_i}\hat{w}_{ij}\|x_j^{s_{ij}^k}-x_j^\star\|^2\\
        &- 2\gamma_i(\phi_i(x_i^{k+1})-\phi_i(x_i^\star)-\langle g_i^\star, x_i^{k+1}-x_i^\star\rangle).
    \end{split}
    \end{equation}
\end{lemma}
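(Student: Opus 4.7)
The plan is to derive \eqref{eq:weak_contract} by comparing the first-order optimality conditions of the update \eqref{eq:asyn_PB_alg} and of the penalized problem \eqref{eq:penal_prob}, then using Assumption~\ref{asm:tilde_f} to pass from the surrogate $m_i^k$ to the true $\phi_i$. Concretely, for each $k\in\mc{K}_i$, optimality of the minimizer in \eqref{eq:asyn_PB_alg} produces subgradients $g_{m_i}^{k+1}\in\partial m_i^k(x_i^{k+1})$ and $g_{h_i}^{k+1}\in\partial h_i(x_i^{k+1})$ satisfying
\[g_{m_i}^{k+1}+g_{h_i}^{k+1}+\tfrac{x_i^{k+1}-x_i^k}{\gamma_i}+\tfrac{1}{\alpha}\sum_{j\in\mc{N}_i}w_{ij}(x_i^k-x_j^{s_{ij}^k})=0,\]
while optimality of $\bx^\star$ for \eqref{eq:penal_prob} yields $g_i^\star\in\partial\phi_i(x_i^\star)$ with $g_i^\star=-\tfrac{1}{\alpha}\sum_{j\in\mc{N}_i}w_{ij}(x_i^\star-x_j^\star)$. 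Taking the inner product of the first identity with $x_i^{k+1}-x_i^\star$, applying the polarization identity to $\langle x_i^{k+1}-x_i^k,x_i^{k+1}-x_i^\star\rangle$, and using the convexity of $m_i^k+h_i$ together with the sandwich \eqref{eq:mk_key_prop} gives a preliminary bound of the form
\[\|x_i^{k+1}-x_i^\star\|^2\le \|x_i^k-x_i^\star\|^2-(1-\gamma_i\beta_i)\|x_i^{k+1}-x_i^k\|^2-2\gamma_i(\phi_i(x_i^{k+1})-\phi_i(x_i^\star))+\Delta,\]
where $\Delta=-\tfrac{2\gamma_i}{\alpha}\sum_{j\in\mc{N}_i}w_{ij}\langle x_i^k-x_j^{s_{ij}^k},x_i^{k+1}-x_i^\star\rangle$ collects the asynchronous cross terms.

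The hard step is reshaping $\Delta$ so that the delayed iterates enter as $\|x_j^{s_{ij}^k}-x_j^\star\|^2$ with weights $\hat{w}_{ij}$ and the $\langle g_i^\star,\cdot\rangle$ contribution emerges. I would decompose $x_i^k-x_j^{s_{ij}^k}=(x_i^k-x_i^\star)-(x_j^{s_{ij}^k}-x_j^\star)+(x_i^\star-x_j^\star)$. The $(x_i^\star-x_j^\star)$ piece, summed with weights $w_{ij}/\alpha$, is precisely $-g_i^\star$ by the optimality of $\bx^\star$, contributing exactly $+2\gamma_i\langle g_i^\star,x_i^{k+1}-x_i^\star\rangle$. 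To the other two pieces I would apply $2\langle a,b\rangle=\|a\|^2+\|b\|^2-\|a-b\|^2$; a key feature is that the resulting $\|x_i^{k+1}-x_i^\star\|^2$ contributions cancel exactly (both have magnitude $\tfrac{\gamma_i(1-w_{ii})}{\alpha}$ with opposite signs), so no step-size hypothesis is needed for this cancellation, and the remaining non-negative squared-difference residuals can simply be dropped.

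Finally, collecting all terms, the $\|x_i^k-x_i^\star\|^2$ coefficient becomes $1-\gamma_i(1-w_{ii})/\alpha=\hat{w}_{ii}$, each $\|x_j^{s_{ij}^k}-x_j^\star\|^2$ carries the target weight $\gamma_i w_{ij}/\alpha=\hat{w}_{ij}$, and $\|x_i^{k+1}-x_i^k\|^2$ is left with coefficient $-\bigl(1-\gamma_i\beta_i-\gamma_i(1-w_{ii})/\alpha\bigr)$. The latter is non-positive exactly under \eqref{eq:step-cond}, so that term can be discarded and \eqref{eq:weak_contract} follows. The role of the step-size condition is therefore confined to absorbing this single $\|x_i^{k+1}-x_i^k\|^2$ residue; everything else is an algebraic identity plus convexity.
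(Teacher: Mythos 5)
Your proof is correct and follows essentially the same route as the paper: first-order optimality of the subproblem and of $\bx^\star$ for \eqref{eq:penal_prob}, convexity of $m_i^k+h_i$ combined with the sandwich \eqref{eq:mk_key_prop}, polarization/Young's inequalities on the cross terms, and the step-size condition \eqref{eq:step-cond} used only to discard the residual $\|x_i^{k+1}-x_i^k\|^2$ term (whose coefficient $-(1-\gamma_i\beta_i-\gamma_i(1-w_{ii})/\alpha)$ equals the paper's $-(\hat{w}_{ii}-\gamma_i\beta_i)$). The only difference is organizational: the paper folds the consensus term into the weights $\hat{w}_{ij}$ before taking inner products, whereas you decompose the delayed cross term $\Delta$ afterward, but the bookkeeping is identical.
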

\begin{proof}
    By the first-order optimality of \eqref{eq:asyn_PB_alg}, there exists $g_i^{k+1}\in \partial (m_i^k+h_i)(x_i^{k+1})$ such that 
    \begin{equation*}
        g_i^{k+1}+\frac{1}{\alpha}\left(x_i^k-\sum_{j\in \bar{\mc{N}}_i}w_{ij}x_j^{s_{ij}^k}\right)+\frac{1}{\gamma_i}(x_i^{k+1}-x_i^k) = 0,
    \end{equation*}
    which can be equivalently rewritten as \begin{equation}\label{eq:xi_k1}
        x_i^{k+1} = \sum_{j\in \bar{\mc{N}}_i}\hat{w}_{ij}x_j^{s_{ij}^k} - \gamma_i g_i^{k+1}.
    \end{equation}
    Moreover, since $\bx^\star$ is an optimum of \eqref{eq:penal_prob}, for a certain subgradient $g_i^\star\in\partial \phi_i(x_i^\star)$, we have
    \begin{equation}\label{eq:gistar_def}
        x_i^\star = \sum_{j\in\bar{\mc{N}}_i} w_{ij}x_j^\star - \alpha g_i^\star,
    \end{equation}
    so that
    \begin{equation}\label{eq:xi_star}
        \begin{split}
            x_i^\star &= x_i^\star+\frac{\gamma_i}{\alpha}\bigg(\sum_{j\in\bar{\mc{N}}_i} w_{ij}x_j^\star - \alpha g_i^\star - x_i^\star\bigg)\\
            &= \sum_{j\in\bar{\mc{N}}_i} \hat{w}_{ij}x_j^\star - \gamma_i g_i^\star.
        \end{split}
    \end{equation}
    By \eqref{eq:xi_k1} and \eqref{eq:xi_star},
    \begin{equation}\label{eq:xi_optexp}
        \begin{split}
        &\|x_i^{k+1}-x_i^\star\|^2\\
        = &\big\langle \sum_{j\in \bar{\mc{N}}_i}\hat{w}_{ij}x_j^{s_{ij}^k} - \gamma_i g_i^{k+1}-x_i^\star, x_i^{k+1}-x_i^\star\big\rangle\\
        = &\big\langle \sum_{j\in \bar{\mc{N}}_i}\hat{w}_{ij}(x_j^{s_{ij}^k} - x_j^\star)-\gamma_i g_i^{k+1}+\gamma_i g_i^\star, x_i^{k+1}-x_i^\star\big\rangle.
    \end{split}
    \end{equation}
    By the convexity of $m_i^k+h_i$ and $g_i^{k+1}\in \partial (m_i^k+h_i)(x_i^{k+1})$,
    \begin{equation}\label{eq:convexity}
        \begin{split}
            &-\gamma_i \langle g_i^{k+1}, x_i^{k+1}-x_i^\star\rangle\\
            \le &\gamma_i((m_i^k+h_i)(x_i^\star)-(m_i^k+h_i)(x_i^{k+1}))\\
            \le &\gamma_i(\phi_i(x_i^\star)-\phi_i(x_i^{k+1})+\frac{\beta_i}{2}\|x_i^{k+1}-x_i^k\|^2),
        \end{split}
    \end{equation}
    where the last step uses \eqref{eq:mk_key_prop}. Moreover,
    \begin{equation*}
    \begin{split}
        &\langle x_i^k-x_i^\star, x_i^{k+1}-x_i^\star\rangle\\
        \le& \frac{\|x_i^k-x_i^\star\|^2+\|x_i^{k+1}-x_i^\star\|^2-\|x_i^{k+1}-x_i^k\|^2}{2}
    \end{split}
    \end{equation*}
    and for each $j\in\mc{N}_i$,
    \begin{equation*}
    \begin{split}
        &\langle x_j^{s_{ij}^k}-x_j^\star, x_i^{k+1}-x_i^\star\rangle\\
        \le& \frac{\|x_j^{s_{ij}^k}-x_j^\star\|^2+\|x_i^{k+1}-x_i^\star\|^2}{2}.
    \end{split}
    \end{equation*}
    These, together with $s_{ii}^k=k$ and $\sum_{j\in\mc{V}} \hat{w}_{ij}=1$, give
    \begin{equation}\label{eq:cross_term}
        \begin{split}
            &\langle \sum_{j\in \bar{\mc{N}}_i}\hat{w}_{ij}(x_j^{s_{ij}^k}-x_j^\star), x_i^{k+1}-x_i^\star\rangle\\
            \le& \frac{\|x_i^{k+1}-x_i^\star\|^2}{2}+\sum_{j\in \bar{\mc{N}}_i}\frac{\hat{w}_{ij}\|x_j^{s_{ij}^k}-x_j^\star\|^2}{2}\\
            &-\frac{\hat{w}_{ii}\|x_i^{k+1}-x_i^k\|^2}{2}.
        \end{split}
    \end{equation}
    Substituting \eqref{eq:convexity} and \eqref{eq:cross_term} into \eqref{eq:xi_optexp} yields
    \begin{equation}\label{eq:uik1}
    \begin{split}
        &\|x_i^{k+1}-x_i^\star\|^2\le \sum_{j\in \bar{\mc{N}}_i}\hat{w}_{ij}\|x_j^{s_{ij}^k}-x_j^\star\|^2\\
        &-(\hat{w}_{ii}-\gamma_i\beta_i)\|x_i^{k+1}-x_i^k\|^2\\
        &- 2\gamma_i(\phi_i(x_i^{k+1})-\phi_i(x_i^\star)-\langle g_i^\star, x_i^{k+1}-x_i^\star\rangle).
    \end{split}
    \end{equation}
    In addition, by \eqref{eq:hat_wij_def} and \eqref{eq:step-cond}, we have
    \[1-\hat{w}_{ii} = (1-w_{ii})\gamma_i/\alpha < 1-\gamma_i\beta_i,\]
    which implies
    \begin{equation}\label{eq:gamma_i_smaller}
        \gamma_i < \hat{w}_{ii}/\beta_i.
    \end{equation}
    Substituting \eqref{eq:gamma_i_smaller} into \eqref{eq:uik1} yields \eqref{eq:weak_contract}. 
\end{proof}

Based on Lemma \ref{lemma:smooth_deter}, we prove the convergence of the algorithm under the partial asynchrony assumption and total asynchrony assumption, respectively. To this end, we introduce the block-wise maximum norm $\|\cdot\|_{b,\infty}$: For any $\bx\in\R^{nd}$,
\begin{equation}\label{eq:norm_def}
    \|\bx\|_{b,\infty} \overset{\triangle}{=} \max_{i\in [n]} \|x_i\|_2.
\end{equation}

\subsubsection{Proof under partial asynchrony (Assumption \ref{asm:partialasynchrony})}\label{sssec:partial}
Define $\tau=B+D$ and for any $t\in\N_0$,
\begin{align}
    \mc{I}_t &= [t(\tau+1), (t+1)(\tau+1)),\label{eq:It_def}\\
    V_i^t &= \max_{k\in\mc{I}_t}\|x_i^k-x_i^\star\|^2,~\forall i\in\mc{V},\label{eq:Vit_def}\\
    V^t &= \max_{i\in\mc{V}} V_i^t.\label{eq:Vt_def}
\end{align}
Our proof consists of {\bf five steps}. In particular, step 1 derives the existence of the limit
\begin{equation}\label{eq:Vstar_def}
    V^\star\overset{\triangle}{=}\lim_{t\rightarrow +\infty} V^t,
\end{equation}
step 2 shows that for all $i\in\mc{V}$,
\begin{equation}\label{eq:Vi_conv}
        \lim_{t\rightarrow +\infty} V_i^t = V^\star,
\end{equation}
step 3 proves that for all $i\in\mc{V}$,
\begin{equation}\label{eq:all_k_vstar}
    \lim_{k\rightarrow +\infty}\|x_i^k-x_i^\star\|^2=V^\star.
\end{equation}
Based on steps 1-3, step 4 establishes the convergence of the algorithm to an optimum of problem \eqref{eq:penal_prob}, and step 5 proves the linear convergence for strong convex $\phi_i$.

\vspace{0.2cm}

{\bf Step 1}: By Lemma \ref{lemma:smooth_deter} and the convexity of $\phi_i$, for any $i\in\mc{V}$ and $k\in\mc{K}_i$,
\begin{equation}\label{eq:copy_shrink}
    \begin{split}
        &\|x_i^{k+1}-x_i^\star\|^2\le \sum_{j\in\bar{\mc{N}}_i}\hat{w}_{ij}\|x_j^{s_{ij}^k}-x_j^\star\|^2.
    \end{split}
\end{equation}
Let \begin{equation}\label{eq:tik_def}
    t_i^k=\min\{t|~t<k, t\in\mc{K}_i\},~\forall i\in\mc{V},
\end{equation}
denote the last iteration that node $i$ updates. For any $k>\tau$ and $i\in\mc{V}$, by Assumption \ref{asm:partialasynchrony} we have $\mc{K}_i\cap [0,k) \ne \emptyset$ and
\begin{equation}\label{eq:tik_lb}
    t_i^k\ge k-B-1.
\end{equation}
Then, even if $k\notin \mc{K}_i$, we have $x_i^k = x_i^{t_i^k+1}$, which, together with \eqref{eq:copy_shrink}, yields
\begin{equation}\label{eq:xik1_minus_xistar}
\begin{split}
    \|x_i^k-x_i^\star\|^2&= \|x_i^{t_i^k+1}-x_i^\star\|^2\\
    &\le \sum_{j\in\bar{\mc{N}}_i}\hat{w}_{ij}\|x_j^{s_{ij}^{t_i^k}}-x_j^\star\|^2.
\end{split}
\end{equation}
Moreover, by \eqref{eq:sijk_range} and \eqref{eq:tik_lb}, 
\begin{equation}\label{eq:sij_range}
    s_{ij}^{t_i^k}\overset{\eqref{eq:sijk_range}}{\ge} t_i^k-D\overset{\eqref{eq:tik_lb}}{\ge} k-B-D-1=k-\tau-1.
\end{equation}
By \eqref{eq:xik1_minus_xistar}, \eqref{eq:sij_range}, and the definition of $\|\cdot\|_{b,\infty}$ in \eqref{eq:norm_def}, for any $k\ge \tau+1$,
\begin{equation*}
    \|\bx^k-\bx^\star\|_{b,\infty}^2\le \max_{k-\tau-1\le \ell\le k-1}\|\bx^\ell-\bx^\star\|_{b,\infty}^2,
\end{equation*}
which further leads to
\begin{equation}\label{eq:monotone_Vt}
    \begin{split}
        \underbrace{\max_{k\in\mc{I}_{t+1}}\|\bx^k-\bx^\star\|_{b,\infty}^2}_{V^{t+1}} \le \underbrace{\max_{k\in\mc{I}_{t}}\|\bx^k-\bx^\star\|_{b,\infty}^2}_{V^t}, \forall t\in\N_0.
    \end{split}
\end{equation}
Since $\{V^t\}$ is monotonically non-increasing and non-negative, the limit $V^\star$ exists.
\vspace{0.2cm}

{\bf Step 2:} We prove it by contradiction. If \eqref{eq:Vi_conv} does not hold for some $i\in\mc{V}$, then there exists $\epsilon'>0$ such that for any $T>0$, there exists $t'\ge T$ such that
\begin{equation}\label{eq:contra_result}
    |V_i^{t'}-V^\star|> \epsilon'.
\end{equation}
By \eqref{eq:Vstar_def}, for any $\epsilon>0$, there exists $T_\epsilon$ such that for all $t\ge T_\epsilon$,
\begin{equation}\label{eq:Vt_upper_bound}
    V^t\le V^\star + \epsilon.
\end{equation}
Let $\epsilon\le \epsilon'$, $T=T_\epsilon$, and $t' \ge T$ be such that \eqref{eq:contra_result} holds. Because $t'\ge T=T_\epsilon$, $V^{t'}\ge V_i^{t'}$ by \eqref{eq:Vt_def}, and $\epsilon\le \epsilon'$, we have
\begin{equation*}
    V_i^{t'}\le V^{t'} \le V^\star+\epsilon\le V^\star+\epsilon',
\end{equation*}
which, together with \eqref{eq:contra_result}, yields
\begin{equation}\label{eq:Vit_small}
    V_i^{t'} < V^\star-\epsilon'.
\end{equation}
Based on \eqref{eq:Vit_small}, we will show that for sufficiently small $\epsilon>0$,
\begin{equation}\label{eq:Vt'+n}
    V^{t'+n} <V^\star.
\end{equation}
However, by \eqref{eq:monotone_Vt} and \eqref{eq:Vstar_def},
\begin{equation}\label{eq:Vt_lowerbound}
    V^t\ge V^\star,\quad \forall t\in \N_0,
\end{equation}
which causes a contradiction and therefore, \eqref{eq:Vi_conv} holds.

To show \eqref{eq:Vt'+n}, define $\mc{A}^{t'}=\{i\}$ and
\[\mc{A}^{t+1} = \cup_{j\in \mc{A}^t} \bar{\mc{N}}_j,\quad\forall t\ge t'.\]
Also define $\bar{V}^{t'}=V^\star-\epsilon'$ and
\[\bar{V}^{t+1} = (1-\hat{w}_{\min}^{\tau+2})(V^\star+\epsilon)+\hat{w}_{\min}^{\tau+2}\bar{V}^t,~\forall t\ge t',\]
where $\hat{w}_{\min}=\min_{i\in\mc{V}, j\in\bar{\mc{N}}_i} \hat{w}_{ij}>0$. For $t=t'$, by \eqref{eq:Vit_small} and $\mc{A}^{t'}=\{i\}$, it holds that
\begin{equation}\label{eq:barV_t}
    V_j^t\le \bar{V}^t,~\forall j\in\mc{A}^t.
\end{equation}
Suppose that \eqref{eq:barV_t} holds at $t$ for some $t\ge t'$. Below, we consider two cases to show \eqref{eq:barV_t} at $t+1$.

\emph{Case 1}: $j\in\mc{A}^t$. Let $k\in\mc{I}^{t+1}$. If $j$ updates at the $(k-1)$th iteration, then by \eqref{eq:copy_shrink},
\begin{equation*}
    \|x_j^k-x_j^\star\|^2\le \hat{w}_{jj}\|x_j^{k-1}-x_j^\star\|^2+\sum_{\ell\in\mc{N}_j}\hat{w}_{j\ell}\|x_\ell^{s_{j\ell}^{t_j^k}}-x_\ell^\star\|^2.
\end{equation*}
Note that by \eqref{eq:sij_range}, we have that $s_{j\ell}^{t_j^k}\in \mc{I}_{t}\cup \mc{I}_{t+1}$, which, together with \eqref{eq:monotone_Vt}, \eqref{eq:Vt_upper_bound}, and $t\ge t'\ge T_\epsilon$, yields
\begin{equation}\label{eq:x_ell^s}
    \|x_\ell^{s_{j\ell}^{t_j^k}}-x_\ell^\star\|^2\le V^t\le V^\star+\epsilon.
\end{equation}
Therefore,
\begin{equation}\label{eq:xjk-xjstar}
    \|x_j^k-x_j^\star\|^2\le \hat{w}_{jj}\|x_j^{k-1}-x_j^\star\|^2+(1-\hat{w}_{jj})(V^\star+\epsilon).
\end{equation}
Moreover, because $j\in\mc{A}^t$ and \eqref{eq:barV_t} holds at $t$, we have that for $k=(t+1)(\tau+1)$,
\begin{equation*}
    \|x_j^{k-1}-x_j^\star\|^2\le \bar{V}^t,
\end{equation*}
substituting which into \eqref{eq:xjk-xjstar} yields that 
\[\|x_j^{(t+1)(\tau+1)}-x_j^\star\|^2\le \hat{w}_{jj}\bar{V}^t+(1-\hat{w}_{jj})(V^\star+\epsilon).\]
By the above equation, \eqref{eq:xjk-xjstar}, and $\hat{w}_{jj}\ge \hat{w}_{\min}$, we have that for any $k\in\mc{I}_{t+1}$ and $j\in\mc{A}^t$,
\begin{equation}\label{eq:xjk-xjstar_jinAt}
\begin{split}
    \|x_j^k-x_j^\star\|^2&\le \hat{w}_{jj}^{\tau+1}\bar{V}^t+(1-\hat{w}_{jj}^{\tau+1})(V^\star+\epsilon)\\
    &\le \hat{w}_{\min}^{\tau+1}\bar{V}^t+(1-\hat{w}_{\min}^{\tau+1})(V^\star+\epsilon).
\end{split}
\end{equation}
By \eqref{eq:xjk-xjstar_jinAt}, $V_j^{t+1}\le \bar{V}^{t+1}$ for $j\in\mc{A}^t$.

\emph{Case 2}: $j\in\mc{A}^{t+1}\setminus \mc{A}^t$. By the definition of $\mc{A}^{t+1}$, for some $j'\in\mc{A}^t$, we have that $j\in\mc{N}_{j'}$. If node $j$ update at the $(k-1)$th iteration for any $k\in\mc{I}^{t+1}$, then by \eqref{eq:xik1_minus_xistar},
\begin{equation}\label{eq:jinAt1_xhjxstar}
    \|x_j^k-x_j^\star\|^2\le \hat{w}_{jj'}\|x_{j'}^{s_{jj'}^{t_j^k}}-x_j^\star\|^2+\sum_{\ell\ne j',\ell\in\bar{\mc{N}}_j}\hat{w}_{j\ell}\|x_\ell^{s_{j\ell}^{t_j^k}}-x_\ell^\star\|^2.
\end{equation}
Because $s_{jj'}^{t_j^k}\in \mc{I}_t\cup\mc{I}_{t+1}$ and $j'\in\mc{A}^t$, by \eqref{eq:barV_t} and \eqref{eq:xjk-xjstar_jinAt}, we have
\begin{equation}\label{eq:jinAt1_xj'}
    \|x_{j'}^{s_{jj'}^{t_j^k}}-x_j^\star\|^2\le \hat{w}_{\min}^{\tau+1}\bar{V}^t+(1-\hat{w}_{\min}^{\tau+1})(V^\star+\epsilon).
\end{equation}
Moreover, for $\ell\ne j'$, by $s_{j\ell}^{t_j^k}\in \mc{I}_t\cup\mc{I}_{t+1}$, \eqref{eq:Vt_upper_bound}, and $t\ge t'\ge T_\epsilon$,
\begin{equation}\label{eq:jinAt1_xell}
    \|x_\ell^{s_{j\ell}^{t_j^k}}-x_\ell^\star\|^2\le \max(V^t, V^{t+1})\le V^\star+\epsilon.
\end{equation}
Substituting \eqref{eq:jinAt1_xj'} and \eqref{eq:jinAt1_xell} into \eqref{eq:jinAt1_xhjxstar} and using $\hat{w}_{jj'}\ge \hat{w}_{\min}$, we have
\begin{equation*}
\begin{split}
    \|x_j^k-x_j^\star\|^2\le & \hat{w}_{jj'}(\hat{w}_{\min}^{\tau+1}\bar{V}^t+(1-\hat{w}_{\min}^{\tau+1})(V^\star+\epsilon))\\
    &+(1-\hat{w}_{jj'})(V^\star+\epsilon)\le \bar{V}^{t+1}.
\end{split}
\end{equation*}
Concluding all the above, in either \emph{case 1} or \emph{case 2}, it holds that $\|x_j^{k+1}-x_j^\star\|^2\le \bar{V}^{t+1}$ for all $k\in\mc{I}_{t+1}$, i.e., \eqref{eq:barV_t} holds at $t+1$. By induction, we have \eqref{eq:barV_t} at all $t\ge t'$.
Since the network is connected, we have
\begin{equation}\label{eq:At'+n}
    \mc{A}^{t'+n}=\mc{V}.
\end{equation}
Moreover, since $\epsilon$ can be arbitrarily close to $0$ and $\bar{V}^{t'+n}<V^\star$ when setting $\epsilon=0$, we can choose sufficiently small $\epsilon>0$ such that $\bar{V}^{t'+n}<V^\star$, which, together with \eqref{eq:At'+n} and \eqref{eq:barV_t}, yields \eqref{eq:Vt'+n}.

{\bf Step 3}:
    By \eqref{eq:Vi_conv}, for any $\epsilon>0$, there exists $T_\epsilon>0$ such that for any $t\ge T_\epsilon$,
    \begin{equation}\label{eq:Vjstar_range}
        V^\star-\epsilon\le V_j^t\le V^\star+\epsilon,~\forall j\in\mc{V}
    \end{equation}
    which, together with the definition of $V_j^t$, implies that for any $k\ge K_\epsilon\overset{\triangle} {=}T_\epsilon(\tau+1)$,
    \begin{equation}\label{eq:small_error}
        \|x_j^k-x_j^\star\|^2\le V^\star+\epsilon,\quad\forall j\in\mc{V}.
    \end{equation}

    Next, we prove \eqref{eq:all_k_vstar} by contradiction. Suppose that \eqref{eq:all_k_vstar} does not hold for some $i\in\mc{V}$. Then, for some $\epsilon'>0$ and any $K$, there exists $k'\ge K$ such that
    \[\big|\|x_i^{k'}-x_i^\star\|^2-V^\star\big|> \epsilon'.\]
    Since $\epsilon$ can be arbirarily close to $0$, we let $\epsilon<\epsilon'$ and $K=K_\epsilon+\tau+1$, which, together with \eqref{eq:small_error} and $k'\ge K$, yields
    \begin{equation}\label{eq:xikxistarsmall}
        \|x_i^{k'}-x_i^\star\|^2 < V^\star - \epsilon'.
    \end{equation}
    By \eqref{eq:copy_shrink}, for any $\ell\ge 0$ such that $k'+\ell\in\mc{K}_i$,
    \begin{equation}\label{eq:xik'+ell+1}
        \|x_i^{k'+\ell+1}-x_i^\star\|^2\le \sum_{j\in\bar{\mc{N}}_i}\hat{w}_{ij}\|x_j^{s_{ij}^{t_i^{k'+\ell+1}}}-x_j^\star\|^2,
    \end{equation}
    and by \eqref{eq:sij_range} and $k'+\ell\in\mc{K}_i$,
    \begin{align}
        & s_{ij}^{t_i^{k'+\ell+1}}\ge k'+\ell+1-(\tau+1)\ge K-(\tau+1)\ge K_\epsilon\nonumber\\
        & s_{ii}^{t_i^{k'+\ell+1}} = t_i^{k'+\ell+1} = k'+\ell.\label{eq:sii}
    \end{align}
    Then, by \eqref{eq:small_error}, 
    \[\|x_j^{s_{ij}^{t_i^{k'+\ell+1}}}-x_j^\star\|^2\le V^\star+\epsilon\]
    substituting which into \eqref{eq:xik'+ell+1} and using \eqref{eq:sii} gives
    \begin{equation}\label{eq:xik'ell}
        \|x_i^{k'+\ell+1}-x_i^\star\|^2\le \hat{w}_{ii}\|x_i^{k'+\ell}-x_i^\star\|^2+(1-\hat{w}_{ii})(V^\star+\epsilon).
    \end{equation}
    Moreover, $\|x_i^{k'+\ell+1}-x_i^\star\|^2=\|x_i^{k'+\ell}-x_i^\star\|^2$ when $k'+\ell\notin \mc{K}_i$, which, together with \eqref{eq:xikxistarsmall} and \eqref{eq:xik'ell}, yields
    \begin{equation}\label{eq:xik'ell}
        \|x_i^{k'+\ell}-x_i^\star\|^2\le V^\star - \hat{w}_{ii}^{2(\tau+1)}\epsilon'+(1-\hat{w}_{ii}^{2(\tau+1)})\epsilon
    \end{equation}
    for all $\ell\in [1,2(\tau+1)]$. Suppose that $k'\in \mc{I}_{t'}$ for some $t'\ge 0$. Then, by \eqref{eq:xik'ell},
    \[V_i^{t'+1}\le V^\star - \hat{w}_{ii}^{2(\tau+1)}\epsilon'+(1-\hat{w}_{ii}^{2(\tau+1)})\epsilon.\]
    Since $\epsilon$ can be arbitrarily small, the right-hand side of the above equation can be strictly smaller than $V^\star$ for sufficiently small $\epsilon>0$, which contradicts \eqref{eq:Vjstar_range}.

    Concluding all the above, \eqref{eq:all_k_vstar} holds for all $i\in\mc{V}$.

\textbf{Step 4}: By the definition of $t_i^k$ in \eqref{eq:tik_def},
\[x_i^{k+1} = x_i^{t_i^{k+1}+1},~\forall i\in\mc{V}, k\in\N_0,\]
which, together with \eqref{eq:uik1}, yields
\begin{equation}\label{eq:uik1_2}
    \begin{split}
        &\|x_i^{k+1}-x_i^\star\|^2 = \|x_i^{t_i^{k+1}+1}-x_i^\star\|^2\\
        \le& \sum_{j\in \bar{\mc{N}}_i}\hat{w}_{ij}\|x_j^{s_{ij}^{t_i^{k+1}}}-x_j^\star\|^2\\
        &-(\hat{w}_{ii}-\gamma_i\beta_i)\|x_i^{k+1}-x_i^{t_i^{k+1}}\|^2\\
        &+2\gamma_i(\phi_i(x_i^\star)-\phi_i(x_i^{k+1})-\langle g_i^\star, x_i^\star-x_i^{k+1}\rangle).
    \end{split}
    \end{equation}
    Also note that
    \begin{equation*}
        \|x_i^{k+1}-x_i^k\|^2 =\begin{cases}
            \|x_i^{k+1}-x_i^{t_i^{k+1}}\|^2, & t_i^{k+1}=k,\\
            0, & \text{otherwise.}
        \end{cases}
    \end{equation*}
    Therefore,
    \begin{equation*}
        \|x_i^{k+1}-x_i^k\|^2 \le \|x_i^{k+1}-x_i^{t_i^{k+1}}\|^2,
    \end{equation*}
    substituting which into \eqref{eq:uik1_2} gives
    \begin{equation}\label{eq:uik1_3}
    \begin{split}
        &\|x_i^{k+1}-x_i^\star\|^2\le \sum_{j\in \bar{\mc{N}}_i}\hat{w}_{ij}\|x_j^{s_{ij}^{t_i^{k+1}}}-x_j^\star\|^2\\
        &-(\hat{w}_{ii}-\gamma_i\beta_i)\|x_i^{k+1}-x_i^k\|^2\\
        &+2\gamma_i(\phi_i(x_i^\star)-\phi_i(x_i^{k+1})-\langle g_i^\star, x_i^\star-x_i^{k+1}\rangle).
    \end{split}
    \end{equation}
    By the convexity of $\phi_i$,
    \begin{equation}\label{eq:convexity_phi}
        \phi_i(x_i^\star)-\phi_i(x_i^{k+1})-\langle g_i^\star, x_i^\star-x_i^{k+1}\rangle\le 0.
    \end{equation}
    Taking limit on both sides of \eqref{eq:uik1_3} and using \eqref{eq:all_k_vstar}, \eqref{eq:convexity_phi}, and \eqref{eq:gamma_i_smaller} gives that for all $i\in\mc{V}$,
    \begin{align}
        &\lim_{k\rightarrow+\infty} \|x_i^{k+1}-x_i^k\|^2 = 0,\label{eq:xk1xkdiff}\\
        &\lim_{k\rightarrow+\infty} \phi_i(x_i^{k+1})-\phi_i(x_i^\star)-\langle g_i^\star, x_i^{k+1}-x_i^\star\rangle = 0.\label{eq:phi_conv}
    \end{align}
    By taking a look at the derivation from \eqref{eq:xi_optexp} to \eqref{eq:uik1} and using \eqref{eq:convexity_phi} and the notation $t_i^k$, we have that for all $i\in\mc{V}$ and $k\in\N_0$,
    \begin{equation}\label{eq:comp}
    \begin{split}
        &\|x_i^{k+1}-x_i^\star\|^2\\
        \le&\langle \sum_{j\in \bar{\mc{N}}_i}\hat{w}_{ij}(x_j^{s_{ij}^{t_i^{k+1}}}\!\!-x_j^\star), x_i^{k+1}-x_i^\star\rangle+\gamma_i\beta_i\|x_i^{k+1}-x_i^k\|^2/2\\
        \le& \langle \sum_{j\in \bar{\mc{N}}_i}\hat{w}_{ij}(x_j^k-x_j^\star), x_i^{k+1}-x_i^\star\rangle+R_i^k\\
        \le& \|\sum_{j\in \bar{\mc{N}}_i}\hat{w}_{ij} (x_j^k-x_j^\star)\|\cdot\|x_i^{k+1}-x_i^\star\|+R_i^k\\
        \le& \sum_{j\in\bar{\mc{N}}_i}\hat{w}_{ij} \|x_j^k-x_j^\star\|\cdot\|x_i^{k+1}-x_i^\star\|+R_i^k,
    \end{split}
    \end{equation}
    where $R_i^k=\sum_{j\in\bar{\mc{N}}_i} \hat{w}_{ij}\langle (x_j^{s_{ij}^{t_i^{k+1}}} - x_j^k), x_i^{k+1}-x_i^\star\rangle+\gamma_i\beta_i\|x_i^{k+1}-x_i^k\|^2/2$ converges to $0$ due to \eqref{eq:sij_range}, \eqref{eq:all_k_vstar}, and \eqref{eq:xk1xkdiff}.
    Taking limit on both sides of \eqref{eq:comp} and using \eqref{eq:all_k_vstar}, we have that the limit of both sides are $V^\star$. By the squeeze theorem for sequences and $\lim_{k\rightarrow+\infty} R_i^k=0$, we have
    \begin{equation}
    \begin{split}
        &\lim_{k\rightarrow +\infty} \|\sum_{j\in \bar{\mc{N}}_i}\hat{w}_{ij} (x_j^k-x_j^\star)\|\cdot \|x_i^{k+1}-x_i^\star\|=V^\star,
    \end{split}
    \end{equation}
    which, together with \eqref{eq:all_k_vstar}, indicates
    \begin{equation}\label{eq:conv_sumwijx}
        \lim_{k\rightarrow +\infty} \|\sum_{j\in \bar{\mc{N}}_i}\hat{w}_{ij} (x_j^k-x_j^\star)\| = \sqrt{V^\star}.
    \end{equation}
Note that by \eqref{eq:Vi_conv}, the sequence $\{\bx^k\}$ is bounded and therefore, has a limit point $\tilde{\bx}$ for a subsequence. Then, by \eqref{eq:all_k_vstar} and \eqref{eq:conv_sumwijx},
\begin{equation*}
    \sqrt{V^\star}=\|\sum_{j\in \bar{\mc{N}}_i}\hat{w}_{ij} (\tilde{x}_j-x_j^\star)\| = \sum_{j\in \bar{\mc{N}}_i}\hat{w}_{ij}\|\tilde{x}_j-x_j^\star\| = \sqrt{V^\star}.
\end{equation*}
By \cite[Lemma 5]{Wu25}, the above equation implies
    \begin{equation}
        \tilde{x}_i - x_i^\star = \tilde{x}_j-x_j^\star,\quad\forall j\in\mc{N}_i,
    \end{equation}
Moreover, $\mc{G}$ is connected. Then, we have
    \begin{equation}\label{eq:IminusWtilde_x}
        (I-\mb{W})(\tilde{\bx}-\bx^\star) = 0.
    \end{equation}
By \eqref{eq:gistar_def},
\[g^\star = \frac{(\mb{W}-I)\bx^\star}{\alpha},\]
which, together with \eqref{eq:IminusWtilde_x}, gives
\begin{equation*}
    \begin{split}
        & -\langle g^\star, \bx^\star-\tilde{\bx}\rangle\\
        =& \frac{\langle (I-\mb{W})\bx^\star, \bx^\star-\tilde{\bx}\rangle}{\alpha}\\
        =&\frac{1}{2\alpha}((\bx^\star)^T(I-\mb{W})\bx^\star-\tilde{\bx}^T(I-\mb{W})\tilde{\bx})\\
        &+\frac{1}{2\alpha}(\bx^\star-\tilde{\bx})^T(I-\mb{W})(\bx^\star-\tilde{\bx})\\
        =&\frac{1}{2\alpha}((\bx^\star)^T(I-\mb{W})\bx^\star-\tilde{\bx}^T(I-\mb{W})\tilde{\bx}).
    \end{split}
\end{equation*}
Moreover, by \eqref{eq:phi_conv},
\begin{equation}\label{eq:lim_phi_0}
    \phi(\bx^\star) - \phi(\tilde{\bx}) - \langle g^\star, \bx^\star-\tilde{\bx}\rangle = 0.
\end{equation}
Therefore,
\[\phi(\tilde{\bx})+\frac{1}{2\alpha}\tilde{\bx}^T(I-\mb{W})\tilde{\bx} = \phi(\bx^\star)+\frac{1}{2\alpha}(\bx^\star)^T(I-\mb{W})\bx^\star,\]
i.e., the objective value of the problem \eqref{eq:penal_prob} at $\tilde{\bx}$ equals to its optimal value, which indicates the optimality of $\tilde{\bx}$ to problem \eqref{eq:penal_prob}.
Moreover, by \eqref{eq:all_k_vstar}, we have
\[\|\tilde{x}_i-x_i^\star\| = V^\star,~\forall i\in\mc{V}.\]
Since $\bx^\star$ is an arbitrary optimum of \eqref{eq:penal_prob}, if we set $\bx^\star=\tilde{\bx}$, then $V^\star=0$, which, together with \eqref{eq:all_k_vstar}, indicates the convergence of $\{\bx^k\}$ to $\tilde{\bx}$.

Concluding all the above, the algorithm converges to an optimal solution of \eqref{eq:penal_prob}.

{\bf Step 5}: Since $\phi_i$ is $\theta_i$-strongly convex, for any $i\in\mc{V}$ and $k\in\N_0$,
\begin{equation}\label{eq:sc}
    \phi_i(x_i^{k+1})\!-\!\phi_i(x_i^\star)\!-\!\langle g_i^\star, x_i^{k+1}-x_i^\star\rangle\ge \frac{\theta_i\|x_i^{k+1}-x_i^\star\|^2}{2}.
\end{equation}
By \eqref{eq:uik1_3}, \eqref{eq:sc}, and \eqref{eq:gamma_i_smaller},
\begin{equation}\label{eq:uik1_4}
    \begin{split}
        \|x_i^{k+1}-x_i^\star\|^2\le &\frac{1}{1+\gamma_i\theta_i}\sum_{j\in \mc{V}}\hat{w}_{ij}\|x_j^{s_{ij}^{t_i^{k+1}}}-x_j^\star\|^2\\
        \overset{\eqref{eq:sij_range}}{\le} & \frac{1}{1+\gamma_i\theta_i}\max_{k-\tau\le \ell\le k}\|\bx^\ell-\bx^\star\|_{b,\infty}^2,
    \end{split}
\end{equation}
so that
\begin{equation}\label{eq:uik1_5}
    \begin{split}
        \|\bx^{k+1}-\bx^\star\|_{b,\infty}^2\le &\frac{\max_{k-\tau\le \ell\le k}\|\bx^\ell-\bx^\star\|_{b,\infty}^2}{1+\min_{i\in\mc{V}}\gamma_i\theta_i}.
    \end{split}
\end{equation}
Based on \eqref{eq:uik1_5}, it is straightforward to derive
\begin{equation}\label{eq:Vt_linear}
    V^t\le \rho^tV^0,
\end{equation}
where $V^t$ is defined in \eqref{eq:Vt_def} and $\rho=1/(1+\min_{i\in\mc{V}}\gamma_i\theta_i)$. For any $k\in\N_0$, since $k\in \mc{I}_{\lfloor k/(\tau+1)\rfloor}$, we have that for any $i\in\mc{V}$,
\begin{equation*}
    \|x_i^k-x_i^\star\|\le V^{\lfloor k/(\tau+1)\rfloor}\overset{\eqref{eq:Vt_linear}}{\le} \rho^{\lfloor k/(\tau+1)\rfloor}V^0,
\end{equation*}
which, together with $V^0\le \|\bx^0-\bx^\star\|_{b,\infty}^2$, indicates \eqref{eq:conv_linear}.

\subsubsection{Convergence under total asynchrony (Assumption \ref{asm:totalasynchrony})}\label{sssec:total} Under total asynchrony, the first step of \eqref{eq:uik1_4} still holds:
\begin{equation}\label{eq:xikxistar_total}
    \|x_i^k-x_i^\star\|^2\le  \rho\sum_{j\in\mc{V}}\hat{w}_{ij}\|x_j^{s_{ij}^{t_i^{k}}}-x_j^\star\|^2.
\end{equation}
Define $\{a_t\}$ as: $a_0=0$; For each $t\in\N_0$, $a_{t+1}$ is the smallest integer such that for all $k\ge a_{t+1}$, if $k\in\mc{K}_i$ for some $i\in\mc{V}$, then
\begin{equation}\label{eq:sij_range_total}
    s_{ij}^{t_i^k}\ge a_t,~\forall j\in\bar{\mc{N}}_i.
\end{equation}
By Assumption \ref{asm:totalasynchrony}, the sequence $\{a_t\}$ is well defined. Moreover, by \eqref{eq:xikxistar_total} and \eqref{eq:sij_range_total}, we have
\begin{equation*}
    \max_{k\in [a_{t+1}, a_{t+2})}\|\bx^k-\bx^\star\|_{b,\infty}^2\le  \rho\max_{k\in [a_{t}, a_{t+1})} \|\bx^k-\bx^\star\|_{b,\infty}^2.
\end{equation*}
Therefore,
\begin{equation*}
    \lim_{t\rightarrow+\infty} \max_{k\in [a_{t+1}, a_{t+2})}\|\bx^k-\bx^\star\|_{b,\infty}^2 = 0,
\end{equation*}
which indicates $\lim_{k\rightarrow+\infty} \|\bx^k-\bx^\star\|_{b,\infty}^2=0$.

\subsection{Proof of Theorem \ref{thm:epsilon_0_adapt}}\label{proof:ada_epsilon_0}

Most derivations in Appendix \ref{proof:epsilon_0} holds when replacing $(\beta_i,\gamma_i)$ with $(\beta_i^k,\gamma_i^k)$. When using adaptive step-size $\gamma_i^k$, $\hat{w}_{ij}$ in Appendix \ref{proof:epsilon_0} will become
\begin{equation}\label{eq:hat_wij_def_adapt}
    \hat{w}_{ij}^k = \begin{cases}
        w_{ij}\gamma_i^k/\alpha, & j\ne i,\\
        1-(1-w_{ii})\gamma_i^k/\alpha, & \text{otherwise}.
    \end{cases}
\end{equation}

To guarantee the results in Lemma \ref{lemma:smooth_deter} holds when replacing $\beta_i,\gamma_i,\hat{w}_{ij}$ with $\beta_i^k,\gamma_i^k,\hat{w}_{ij}^k$, we only need to guarantee \eqref{eq:beta_ik} that is already assumed and
\begin{align}
    \gamma_i^k &\le \frac{\hat{w}_{ii}^k}{\beta_i^k},\label{eq:gammaik_lemma}\\
    \hat{w}_{ij}^k &>0~\forall j\in\bar{\mc{N}}_i.\label{eq:hatwij>0}
\end{align}
By \eqref{eq:hat_wij_def_adapt}, we have $\hat{w}_{ii}^k = 1-(1-w_{ii})\gamma_i^k/\alpha$, so that \eqref{eq:gammaik_lemma} is equivalent to
\begin{equation}\label{eq:suffi_cond}
    \gamma_i^k\left((1-w_{ii})/\alpha+\beta_i^k\right) \le 1,
\end{equation}
which naturally holds since the left-hand side equals to $\eta_i<1$ by \eqref{eq:ada_step-cond}. As a result, \eqref{eq:gammaik_lemma} hold. 

By \eqref{eq:ada_step-cond_lowerbound}, $\gamma_i^k>0$, so that $\hat{w}_{ij}^k>0$ $\forall j\in\mc{N}_i$. Now, to show \eqref{eq:hatwij>0}, it sufficies to show $\hat{w}_{ii}^k>0$, which is equivalent to $\gamma_i^k\le \frac{1-w_{ii}}{\alpha}$. By \eqref{eq:ada_step-cond} and $\eta_i<1$,
\begin{equation}\label{eq:gamma_i_bound}
    \gamma_i^k \le \frac{\eta_i}{\beta_i^k+\frac{1-w_{ii}}{\alpha}}\le \frac{\alpha\eta_i}{1-w_{ii}} \le \frac{\alpha}{1-w_{ii}} ,
\end{equation}
so that $\hat{w}_{ii}^k>0$.
Consequently, Lemma \ref{lemma:smooth_deter} holds when replacing $\beta_i,\gamma_i,\hat{w}_{ij}$ with $\beta_i^k,\gamma_i^k,\hat{w}_{ij}^k$.

To guarantee the results in {\bf Steps 1-5} and Appendix \ref{sssec:partial}), we only need 
\begin{equation}\label{eq:hatw_ii_positive}
\min_{i\in\mc{V}}\min_{k\in\mc{K}_i}\min_{j\in\bar{\mc{N}}_i} \hat{w}_{ij}^k>0
\end{equation}
and \eqref{eq:gammaik_lemma} that has been proved. To show \eqref{eq:hatw_ii_positive}, note that
\[\min_{i\in\mc{V}}\min_{k\in\mc{K}_i}\min_{j\in \mc{N}_i} \hat{w}_{ij}^k\ge \frac{\min_{i\in\mc{V}}\min_{j\in \mc{N}_i} w_{ij}}{\alpha}\min_{k\in\mc{K}_i}\gamma_i^k,\]
where the right-hand side is positive due to \eqref{eq:ada_step-cond_lowerbound}. Moreover, by \eqref{eq:gamma_i_bound}, $\hat{w}_{ii}^k\ge 1-\eta_i$. Therefore, \eqref{eq:hatw_ii_positive} holds.

Concluding all the above, the results in Theorem \ref{thm:epsilon_0_adapt} hold.

\subsection{Proof of Theorem \ref{thm:sto_smooth}}\label{proof:sto_smooth}

Note that the equations \eqref{eq:xi_optexp} and \eqref{eq:cross_term} still hold even in the stochastic setting. For all $i\in\mc{V}$ and $k\in\mc{K}_i$, due to the convexity of $m_i^k$ and $h_i$,
\begin{equation}\label{eq:-gammaigik1}
    \begin{split}
        &-\gamma_i \langle g_i^{k+1}, x_i^{k+1}-x_i^\star\rangle\\
        \le &\gamma_i((m_i^k+h_i)(x_i^\star)-(m_i^k+h_i)(x_i^{k+1}))\\
        = & \gamma_i(\phi_i(x_i^\star)-\phi_i(x_i^{k+1}))+\underbrace{\gamma_i(m_i^k(x_i^\star)-f_i(x_i^\star))}_{\Delta_1}\\
        &+\underbrace{\gamma_i(f_i(x_i^{k+1}) - m_i^k(x_i^{k+1}))}_{\Delta_2}.
    \end{split}
\end{equation}
Substituting \eqref{eq:-gammaigik1} and \eqref{eq:cross_term} into \eqref{eq:xi_optexp} gives
    \begin{equation*}
        \begin{split}
        \|x_i^{k+1}-x_i^\star\|^2&\le \sum_{j\in \bar{\mc{N}}_i}\hat{w}_{ij}\|x_j^{s_{ij}^k}-x_j^\star\|^2\\
            &+2(\Delta_1+\Delta_2-\frac{\hat{w}_{ii}\|x_i^{k+1}-x_i^k\|^2}{2})\\
            &-2\gamma_i(\phi_i(x_i^{k+1})-\phi_i(x_i^\star)-\langle g_i^\star, x_i^\star-x_i^{k+1}\rangle),
    \end{split}
    \end{equation*}
    which, together with the strong convexity of $\phi_i$, yields
    \begin{equation}\label{eq:uik1_sto}
    \begin{split}
        \|x_i^{k+1}-x_i^\star\|^2&\le \rho\sum_{j\in \bar{\mc{N}}_i}\hat{w}_{ij}\|x_j^{s_{ij}^k}-x_j^\star\|^2\\
            &+2\rho(\Delta_1+\Delta_2-\frac{\hat{w}_{ii}\|x_i^{k+1}-x_i^k\|^2}{2}).
    \end{split}
    \end{equation}
    Moreover, taking expectations on $\Delta_1,\Delta_2$ over $\mc{F}^k$ and using Assumption \ref{asm:stoch_tilde_f} and \eqref{eq:gamma_i_smaller} yield
    \begin{align*}
        &\mbb{E}[\Delta_1|~\mc{F}^k] \le \epsilon_i^\star,\\
        &\mbb{E}\left[\Delta_2-\frac{\hat{w}_{ii}\|x_i^{k+1}-x_i^k\|^2}{2}\big|~\mc{F}^k\right] \le \epsilon_i.
    \end{align*}
    Taking expectations on both sides of \eqref{eq:uik1_sto} over $\mc{F}^k$ and by Assumption \ref{asm:partialasynchrony} and the above equation, we have
    \begin{equation}\label{eq:ind_exp_xik1}
    \begin{split}
        &\mbb{E}[\|x_i^{k+1}-x_i^\star\|^2|~\mc{F}^k]\\
        \le& \rho \sum_{j\in \bar{\mc{N}}_i}\hat{w}_{ij}\mbb{E}[\|x_j^{s_{ij}^k}-x_j^\star\|^2|~\mc{F}^k]+2\rho(\epsilon_i+\epsilon_i^\star)\\
        =& \rho \sum_{j\in \bar{\mc{N}}_i}\hat{w}_{ij}\mbb{E}[\|x_j^{s_{ij}^k}-x_j^\star\|^2|~\mc{F}^{s_{ij}^k-1}]+2\rho(\epsilon_i+\epsilon_i^\star).
    \end{split}
    \end{equation}
    Under Assumption \ref{asm:partialasynchrony}, \begin{equation}\label{eq:ind_exp_xik1_partial}
    \begin{split}
        &\mbb{E}[\|x_i^{k+1}-x_i^\star\|^2|~\mc{F}^k]\\
        \le& \rho \max_{k-D\le t\le k}\max_{j\in\mc{V}}\mbb{E}[\|x_j^t-x_j^\star\|^2|~\mc{F}^{t-1}]+2\rho(\epsilon_i+\epsilon_i^\star).
    \end{split}
    \end{equation}
    By the above equation, we will show that for all $k\in\mc{I}^t$ where $\mc{I}^t$ is defined in \eqref{eq:It_def}, it holds that
    \begin{equation}\label{eq:ind_res}
        \max_{i\in\mc{V}}\mbb{E}[\|x_i^k-x_i^\star\||~\mc{F}^{k-1}]\le \rho^t\max_{i\in\mc{V}}\|x_i^0-x_i^\star\|^2+C.
    \end{equation}
    By \eqref{eq:ind_exp_xik1_partial}, it is straightforward to see \eqref{eq:ind_res} at $t=0$. Suppose that \eqref{eq:ind_res} holds for some $t\in \N_0$. Then, for each $k\in\mc{I}^{t+1}$, 
    \begin{equation*}
        \begin{split}
            &\mbb{E}[\|x_i^k-x_i^\star\|^2|~\mc{F}^{k-1}]\\
            =&\mbb{E}[\|x_i^{t_i^k+1}-x_i^\star\|^2|~\mc{F}^{t_i^k}]\\
            \le& \rho \max_{t_i^k-D\le t\le t_i^k}\max_{j\in\mc{V}}\mbb{E}[\|x_j^t-x_j^\star\|^2|~\mc{F}^{t-1}]+2\rho(\epsilon_i+\epsilon_i^\star)\\
            \le& \rho^{t+1} \max_{i\in\mc{V}}\|x_i^0-x_i^\star\|^2+C.
        \end{split}
    \end{equation*}
    Therefore, \eqref{eq:ind_res} holds for all $t\in\N_0$, which implies \eqref{eq:sto_conv}.

    For the convergence under total asynchrony, by \eqref{eq:ind_exp_xik1} and following the proof in Section \ref{sssec:total}, we obtain \eqref{eq:lim_sup}.

\section*{Reference}
\bibliography{reference.bib}
\bibliographystyle{unsrt}

\begin{IEEEbiography}[{\includegraphics[width=1in,height=1.25in,clip]{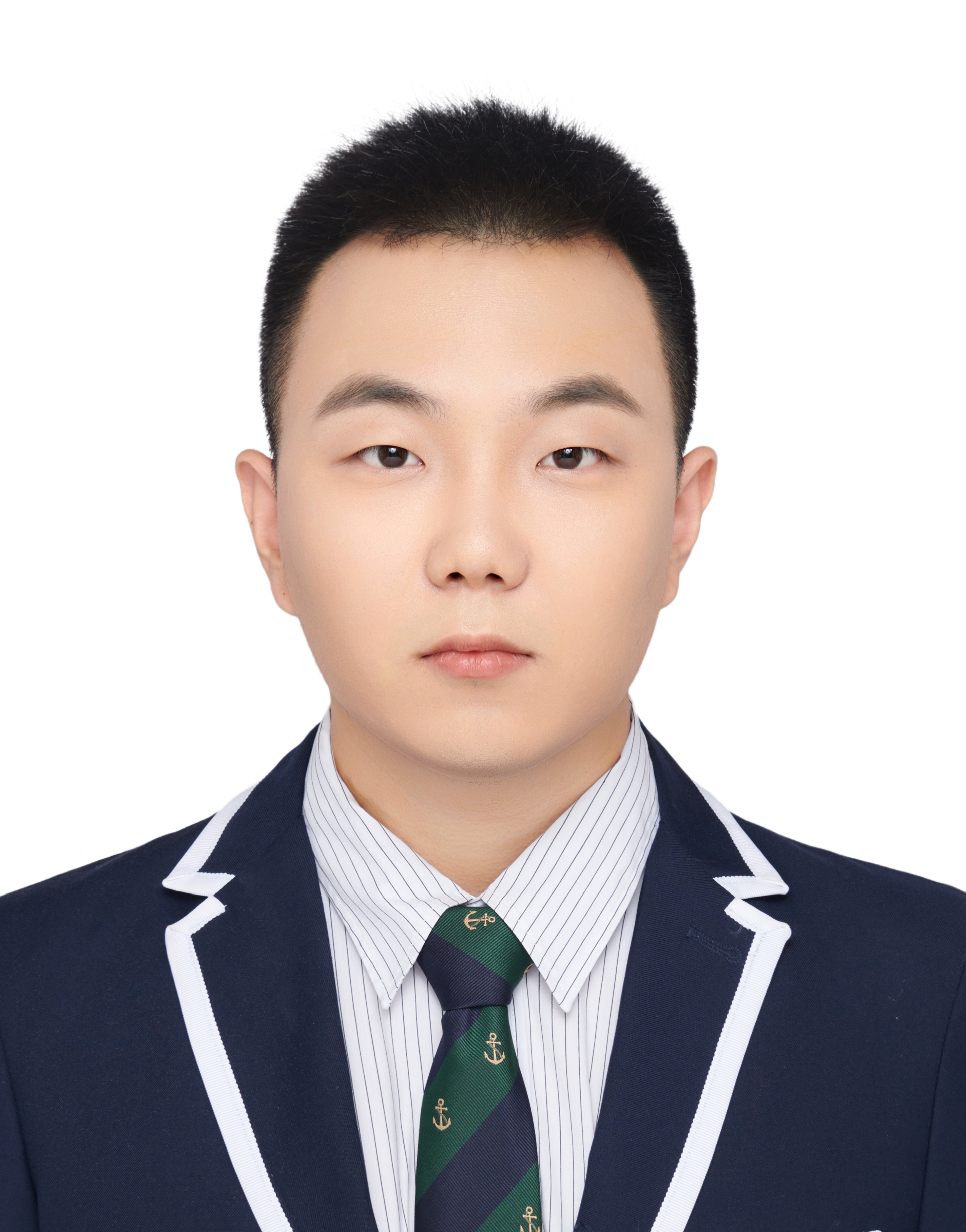}}]{Zhao Zhu } received the B.S. degree in Mathematics and Applied Mathematics from Hainan University, Hainan, China, in 2025. He is currently pursuing the M.S. degree in Control Science and Engineering at Southern University of Science and Technology, Shenzhen, China. His research interests include distributed optimization and asynchronous optimization.
\end{IEEEbiography}

\begin{IEEEbiography}[{\includegraphics[width=1in,height=1.25in,clip,keepaspectratio]{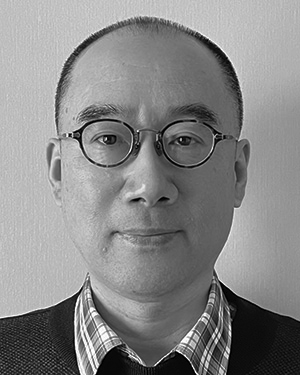}}]{Yu-Ping Tian}
received the bachelor’s degree in automation from Tsinghua University, Beijing, China, in 1986, the Ph.D. degree in electrical engineering from Moscow Power Institute, Moscow, USSR, in 1991, and the Sc.D. degree in electrical engineering from Taganrog State Radio-engineering University, Taganrog, Russia, in 1996.

From January 1992 to December 2018, he was a Faculty with the School of Automation, Southeast University, Nanjing, China. From December 2018 to January 2024, he was a Professor with the School of Automation, Hangzhou Dianzi University, Hangzhou, China. From January 2024 to July 2024, he was a Professor with the School of System Design and Intelligent Manufacturing, Southern University of Since and Technology, Shenzhen, China. Since July 2024, he has been with Hangzhou City University, Hangzhou, where he is a Professor with the School of Information and Electrical Engineering. He has also held visiting appointments with several other universities, including Central Queensland University, Rockhampton, QLD, Australia; University of California at Berkeley, Berkeley, CA, USA; and City University of Hong Kong, Hong Kong. His research interests include the control and estimation theory for engineering systems, especially distributed control and estimation theory for sensor networks, multirobot systems, and cyber-physical systems.

Prof. Tian is a recipient of the Chang Jiang Distinguished Professorship awarded by the Education Ministry of China and the Distinguished Young Scholar Award of the National Natural Science Foundation of China.
\end{IEEEbiography}

\begin{IEEEbiography}[{\includegraphics[width=1in,height=1.25in,clip]{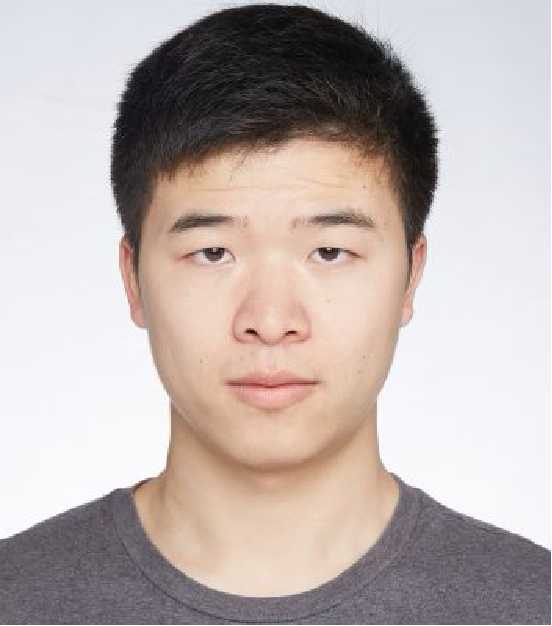}}]{Xuyang Wu} (Member, IEEE) received the B.S. degree in Information and Computing Science from Northwestern Polytechnical University, Xi'an, China in 2015, and the Ph.D. degree from the University of Chinese Academy of Sciences, China in 2020. He was a postdoctoral researcher in the Division of Decision and Control Systems at KTH Royal Institute of Technology, Stockholm, Sweden, from 2020 to 2023. He is currently an Associate Professor in the School of Automation and Intelligent Manufacturing at the Southern University of Science and Technology, Shenzhen, China. His research interests include distributed optimization and machine learning.
\end{IEEEbiography}

\end{document}